\renewcommand{\emph}{\textit}		
\newcommand{\com}{\ifthenelse{\boolean{comm}}}
\newcommand{\sol}{\ifthenelse{\boolean{sol}}}
\newcommand{\note}{\ifthenelse{\boolean{notes}}}
\newtheorem{Def}{Definition}
\newtheorem{Prop}[Def]{Proposition}
\newtheorem{Th}[Def]{Theorem}
\newtheorem{Lem}[Def]{Lemma}
\newtheorem{Cor}[Def]{Corollary}
\theoremstyle{definition}
\newtheorem{Rem}[Def]{Remark}
\newtheorem{Ex}[Def]{Example}
\newcommand{\mR}{\ensuremath{\mathbb{R}}}								
\newcommand{\mC}{\ensuremath{\mathbb{C}}}					
\newcommand{\mN}{\ensuremath{\mathbb{N}}}
\newcommand{\mZ}{\ensuremath{\mathbb{Z}}}
\newcommand{\mK}{\ensuremath{\mathbb{K}}}
\newcommand{\mc}{\mathcal}								
\DeclareMathOperator{\Image}{Im}		
\renewcommand{\Im}{\Image}							
\newcommand{\cat}{\mathcal}
\newcommand{\ee}{{\mbox{$\varepsilon$}}}
\renewcommand{\phi}{\varphi}
\newcommand{\coloneqq}{:=}
\newcommand{\fn}{\mathcal{C}}	   				  
\newcommand{\mb}{\begin{pmatrix}}					
\newcommand{\me}{\end{pmatrix}}						
\newcommand{\hsp}{\mc{H}}									
\newcommand{\bo}{\mathbb{B}(\mc{H})}			
\newcommand{\co}{\mathbb{K}(\mc{H})}			
\newcommand{\Lp}{\mathcal{L}^p}						
\newcommand{\Dom}{\mathcal{D}}						
\DeclareMathOperator{\esssup}{esssup}     
\newcommand{\sco}{\ensuremath{\mathcal{K}}}						
\newcommand{\scy}{\ensuremath{\mathcal{Z}}}						
\newcommand{\ssu}{\ensuremath{\mathcal{S}}}						
\newcommand{\sCo}{\ensuremath{\mathcal{C}}}						
\newcommand{\potimes}{\otimes_{\pi}}									
\newcommand{\inotimes}{{\otimes_{\varepsilon}}}				
\newcommand{\LC}{\mc{LC}}															
\newcommand{\mcA}{\mc{A}}															
\newcommand{\mcB}{\mc{B}}
\newcommand{\mcC}{\mc{C}}
\newcommand{\mcE}{\mc{E}}
\newcommand{\mcS}{\mc{S}}
\DeclareMathOperator{\Hom}{Hom}						
\DeclareMathOperator{\id}{id}							
\newcommand{\rrarrow}{\rightrightarrows}
\newcommand{\cliffs}{\mathscr{S}}
\newcommand{\fun}{h}
\begin{document}

\title{\Large{\MakeUppercase{Universal cycles and homological invariants of locally convex algebras}}}
\author{\small{MARTIN GRENSING}}
\date{\small{\today}}
\maketitle
\thispagestyle{empty}
\begin{abstract} \small{Using an appropriate notion of locally convex Kasparov modules, we show how to induce isomorphisms under a large class of functors on the category of locally convex algebras; examples are obtained from spectral triples. Our considerations are based on the action of  algebraic $K$-theory on these functors, and involve compatibility properties of the induction process with this action, and with Kasparov-type products. This is based on an appropriate interpretation of the Connes-Skandalis connection formalism. As an application, we prove Bott periodicity and a Thom isomorphism for algebras of Schwartz functions.
 As a special case, this applies to the theories $kk$ for locally convex algebras considered by Cuntz.}
\end{abstract}

\section{Introduction}
The subject of this article are homology-type invariants of locally convex algebras. It analyses to what extend  locally convex bimodules are universal for such homology theories.

Let us first recall the situation in the setting of (separable) $C^*$-algebras. On this category, we have at our disposal Kasparov's bifunctor $KK$. When $KK$ is restricted to the first or second variable, one obtains $K$-homology or $K$-theory, respectively. The elements of $KK$ may be described in essentially three ways: The extension picture, the module picture and the quasihomomorphism picture. The extension picture yields, when specialised to the first variable, the $Ext$-theory of Brown, Douglas and Fillmore (\cite{MR0458196}), which is based on earlier work of Busby (\cite{MR0225175}); here classes in $Ext(A,\mC)$ are described as extensions of $A$ by the compact operators. In the module picture, classes in $KK(A,B)$ are represented by bounded operators acting on a Hilbert $B$-module, which carries a left $A$-action (\cite{KaspOp}); in the so-called Baaj-Julg picture, the operator is replaced by a regular operator on the Hilbert module. In the quasihomomorphism picture, all information is contained in a pair of homomorphisms, both from $A$ into an algebra containing $\mK\otimes B$ as an ideal, with difference in $\mK\otimes B$  (\cite{MR899916}/\cite{MR733641}). 

In each of these pictures, there is a corresponding description of the Kasparov product $KK(A,B)\times KK(B,C)\to KK(A,C)$. In the extension picture, it corresponds to a splice, or Yoneda product, of extensions (\cite{MR1001468}). In the module picture it can be defined rather explicitly using certain operators $M$ and $N$, whose existence is guaranteed by Kasparov's technical theorem. The proof of this theorem is based on properties of the category of $C^*$-algebras; so is the proof of the fundamental theorem in (\cite{MR899916}), which is used in the definition of the product in the quasihomomorphism picture. The most calculable description of the product is the one in the module picture, where it is a generalisation of the Atiyah-Singer sharp product. 

It is well known that Kasparov's bivariant $K$-functor is universal among certain functors. If $H$ is a split exact, homotopy invariant and stable functor, then every class in  $KK(A,B)$ induces a homomorphism $H(A)\to H(B)$. Furthermore, it can be shown that this induction process is compatible with Kasparov products: if we are given classes $x\in KK(A,B)$ and $y\in KK(B,C)$, then the homomorphism induced by the Kasparov product of $x$ and $y$ is the composition of the homomorphisms induced separately by $x$ and $y$.

In the locally convex setting, the situation is different. We still have at our disposal a bivariant $K$-theory, namely the theory $kk$ constructed by Cuntz  (\cite{MR1456322}/\cite{MR2240217}). The definition of $kk$ is based on linearly split extensions of locally convex algebras of arbitrary length, and the composition corresponds again to the Yoneda product of extensions. However, elements of $kk$ still appear naturally in form of what we will call locally convex Kasparov modules. For example, a spectral triple on a compact $spin^c$-manifold $M$ satisfies additional summability conditions, and therefore defines a
 locally convex $(A,B)$-Kasparov module, where $A:=\fn^\infty(M)$ and $I$ is some ideal in the algebra bounded operators on a Hilbert space. To this object, one may associate a quasihomomorphism $\phi$ of locally convex algebras, which in turn induces a homomorphism $H(\phi):H(A)\to H(B)$ for every split exact functor. Already at this stage, there are some unresolved issues. If we take $H:=kk(\,\cdot\,,B)$, then it is not clear in how far elements of the form $kk( \phi,B)$, where $\phi$ is the quasihomomorphism associated to a  locally convex $(A,B)$-Kasparov module $x$, exhaust the group $kk(A,B)$. In  \cite{CuntzMeyer}, it suggested that this is  not the case. However, as we noted above, elements of $kk$ can be represented as such locally convex bimodules, and we show that they suffice to obtain the isomorphisms under  split exact, diffotopy invariant $\Lp$-stable functors we are interested in.

We therefore develop a complete framework of locally convex Kasparov modules, together with equivalence relations between them, and study how they induce morphisms under split exact functors on the category of locally convex algebras. We show that smooth versions of Kasparov's Dirac and dual-Dirac elements appear naturally as locally convex Kasparov modules.

The crucial, and most difficult question now is to what extent the above compatibility of the induction process with Kasparov products still holds in this more general context. Essentially, the fact that in the $C^*$-setting any two composable classes in $KK$ have a product which is again represented by a Kasparov product, corresponds to the fact that the splice of two extensions of length, say two, may again be represented by a length two extension; the analogous property for a functor on locally convex algebras is very likely false.

It is thus necessary to find sufficient conditions for a locally convex Kasparov module to represent the composition $H(\psi)\circ H(\phi)$ of two  homomorphisms induced under a functor $H$ by quasihomomorphisms coming from locally convex Kasparov modules. There are two closely related approaches to this problem. The first consists in finding operators $M$ and $N$, as in Kasparov's original description of the product, that satisfy the correct algebraic relations with respect to locally convex algebras involved in the construction of the product. This approach is feasible at least to some extent, but it turns out to be very technical. 

The second approach, which we will follow here, is based on a new interpretation of the Connes-Skandalis connection formalism (\cite{MR775126}). Already in the $C^*$-setting, it yields a genuine description of the Kasparov product, based only on Kasparov's version of the theorem of Voiculescu (\cite{MR587371}). The essential idea is that the existence of a product for two (locally convex) Kasparov modules may be interpreted as the existence of an extension of a Kasparov module. Just like the connection formalism by Connes and Skandalis,  this extension condition only yields a sufficient criterion for a cycle to represent a product; the existence of such an extension remains a consequence of Kasparov's technical theorem, whose validity is restricted to the $C^*$-setting. However, the existence of the aforementioned extension is easier to show than the existence of the operators $M$ and $N$. In the case where the first cycle is represented by a class in $K$-theory, we show that it is always possible to construct such an extension, and thus show the existence of a product in complete generality. This fact yields Bott periodicity. Using the formalism of locally convex Kasparov modules, we are also able to prove a smooth version of the Thom isomorphism theorem for the  algebra of fibrewise Schwartz functions on a smooth vector bundle over a compact smooth manifold for every split exact, $\Lp$-stable, diffotopy invariant functor.

The formalism of locally convex Kasparov modules is further useful in order to define explicitly classes in $kk$, for example those related to pseudodifferential operators. This is quite difficult in general due to the very abstract definition of $kk$. Our considerations also show that $kk$ is very likely not an analogue of Kasparov's $KK$-theory, but rather of the $E$-theory of Connes and Higson \cite{MR1065438}.

I would like to thank G. Skandalis and J. Cuntz for helpful discussions and remarks.
\section{Preliminaries}\label{prelimi}
In the sequel, we will work on the category $\LC$\index{$\LC$} of locally convex algebras. By a locally convex algebra we will mean a complete locally convex vector space that is at the same time a topological algebra, \textsl{i.e.}, the multiplication is continuous  (and associative). This means that for every continuous seminorm $p$ on $\mcA$ there is a continuous seminorm $q$ on $\mcA$ such that for all $a,b\in \mcA$
$$p(ab)\leq q(a)q(b).$$

Recall that for two locally convex topological vector spaces $V$ and $W$, their projective tensor product is defined as the algebraic tensor product $V\odot W$ of $V$ and $W$ with the locally convex structure induced from the family of seminorms defined for all $x\in V\odot W$ by
$$p\odot q (x):=\inf\{\sum\limits_{i=1}^n p(v_i)q(w_i)\big| x=\sum\limits_{i=1}^n v_i\odot w_i\},$$
where $p$ and $q$ run through systems of seminorms defining the topologies of $V$ and $W$ (see \cite{MR0075539}). The completed projective tensor product of complete locally convex spaces as above will be denoted $V\potimes W$\index{$V\potimes W$}, the extension of $p\odot q$ to $V\potimes W$ will be denoted by $p\otimes q$. The topology on $V\potimes W$ is equal to the completion of the final topology for the canonical map $V\times W\to V\odot W$.

The injective tensor product of locally convex spaces is defined in \cite{Treves}, or again in the original \cite{MR0075539} \ifthenelse{\boolean{notes}}{\marginpar{define}}{}. For complete locally convex spaces $V$ and $W$, the completion of $V\odot W$ in this topology will be denoted $V\inotimes W$\index{$V\inotimes W$}; if $V$ and $W$ are Banach spaces, $V\inotimes W$ is a Banach space with a certain norm $||\cdot||_{\footnotesize{\ee}}$. Recall that for complete subspaces $V'\leq V$ and $W'\leq W$, $\inotimes$, in contrary to $\potimes$, does have the property that $V'\inotimes W'\hookrightarrow V\inotimes W$ (see \cite{Treves} Proposition 43.7).

For $C^*$-algebras $A$ and $B$, the minimal tensor product $A\otimes_* B$\index{$A\otimes_* B$} is obtained by taking the tensor product of the Hilbert spaces from a faithful representation of $A$ and $B$ and pulling back (and completing) the induced norms on $A\odot B$ (see \cite{MR1074574} for details, e.g., the independence of the presentation, nuclearity, exactness and so on); we denote the obtained $C^*$-norm by $||\cdot||_*$. The maximal tensor product $A\otimes_{\max} B$\index{$A\otimes_{\max} B$} is obtained as the completion in the norm $||\cdot||_{\max}$ given by the supremum over all norms coming from  $*$-representations of $A$ and $B$. If $B$ is nuclear we will often not distinguish between these two $C^*$-tensor products.

\note{
Recall that for $x\in A\odot B$ we have inequalities
$$||x||_\pi\geq ||x||_{\max}\geq ||x||_*\geq||x||_{\footnotesize{\ee}},$$
Also, the maximal $C^*$-tensor product is a quotient of the minimal tensor product (the image of a *-homomorphisms of $C^*$-algebras being closed). We will have to view, for example, the projective tensor product $\mcA\potimes\Lp$, where $\mcA$ is a subalgebra of a $C^*$-algebra $A$ and $\Lp$ denotes the p-Schatten class, as a subspaces of the  $C^*$-tensor product $A\otimes \mK$. For this, we need a notion ensuring injectivity of certain natural maps. As the space $\Lp$ is an infinite dimensional Banach space, it is non-nuclear in the sense of Grothendieck (\cite{MR0075539}: a locally convex space $V$ is called nuclear in the sense of Grothendieck, if as functors on the category of locally convex spaces we have $\potimes V= \inotimes V$). Hence we need a weaker property called the approximation property (see (\cite{MR1874893})): A Banach space  has the  approximation property  if the compact operators on it are the closure of the finite rank operators. Note that for symmetrically normed operator ideals the approximation property can be easily checked by using the classification via gauge-functions exposed in \cite{Simontrace}. The following fact is well known:
\begin{Prop} Let $V$ have the  approximation property, then for every locally convex vector space $W$ the canonical map $V\potimes W\to V\inotimes W$ is injective .
\end{Prop}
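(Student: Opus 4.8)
The plan is to describe the kernel of the canonical map $\iota\colon V\potimes W\to V\inotimes W$ in terms of nuclear operators and then to show, using the approximation property, that this kernel is trivial. The underlying idea is classical: an element of $V\potimes W$ is a ``nuclear'' object, the injective tensor product sees only the associated operator, and the approximation property lets one write the identity of $V$ as a limit of finite-rank maps, which forces that operator---and hence the element---to vanish once the operator does.

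First I would reduce to the case in which $W$, like $V$, is a Banach space. For a continuous seminorm $q$ on $W$ let $W_q$ be the completion of $W/q^{-1}(0)$ with its canonical norm and $\pi_q\colon W\to W_q$ the canonical map; then $\id_V\otimes\pi_q$ induces continuous maps $\rho_q\colon V\potimes W\to V\potimes W_q$ and $\sigma_q\colon V\inotimes W\to V\inotimes W_q$ with $\sigma_q\circ\iota=\iota_q\circ\rho_q$, where $\iota_q$ is the analogous canonical map for $W_q$. By the compatibility of $\potimes$ with the passage to local Banach spaces (a standard property, cf.\ \cite{Treves}), the seminorm $p\otimes q$ of an element of $V\potimes W$---with $p$ the norm of $V$---equals the projective norm of its image in $V\potimes W_q$, and these seminorms generate the topology of $V\potimes W$. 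Since $V\potimes W$ is Hausdorff, $u=0$ as soon as $\rho_q(u)=0$ for every $q$; and $\iota(u)=0$ forces $\iota_q(\rho_q(u))=\sigma_q(\iota(u))=0$. So it suffices to prove the statement for $V$ and $W$ Banach.

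In that case I would fix, for $u\in V\potimes W$ with $\iota(u)=0$, a representation $u=\sum_n\lambda_n x_n\otimes y_n$ with $\sum_n|\lambda_n|\le 1$, $\|x_n\|\le 1$ and $\|y_n\|\to 0$, and pass to the associated nuclear operator $u^{\sharp}\colon V'\to W$, $u^{\sharp}(\phi)=\sum_n\lambda_n\phi(x_n)\,y_n$. The key point---the step I expect to need the most care---is the identification of $V\inotimes W$ with a subspace of $\mathcal{L}(V',W)$: the injective cross-norm is precisely such that $x\otimes y\mapsto[\phi\mapsto\phi(x)y]$ extends to an isometric embedding $V\inotimes W\hookrightarrow\mathcal{L}(V',W)$ (operator norm), and the composite $V\potimes W\xrightarrow{\iota}V\inotimes W\hookrightarrow\mathcal{L}(V',W)$ agrees with $u\mapsto u^{\sharp}$ on the dense subspace $V\odot W$, hence everywhere. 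Therefore $\iota(u)=0$ is equivalent to $u^{\sharp}=0$, i.e.\ to $\sum_n\lambda_n\phi(x_n)\,y_n=0$ for all $\phi\in V'$. This, together with the structure theorem for elements of $V\potimes W$, is standard injective-tensor-product material (\cite{Treves}, \cite{MR1874893}).

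Finally I would invoke the approximation property. The set $K=\{x_n:n\in\mN\}\cup\{0\}$ is compact in $V$, so the identity of $V$ can be approximated, uniformly on $K$, by finite-rank operators: given $\varepsilon>0$ there is $T=\sum_{j=1}^{m}\phi_j(\cdot)\,e_j$ with $\phi_j\in V'$, $e_j\in V$ and $\|Tx-x\|<\varepsilon$ for all $x\in K$. Then
$$(T\otimes\id_W)(u)=\sum_{j=1}^{m}e_j\otimes\Bigl(\sum_n\lambda_n\phi_j(x_n)\,y_n\Bigr)=\sum_{j=1}^{m}e_j\otimes u^{\sharp}(\phi_j)=0,$$
whence $u=\bigl((\id_V-T)\otimes\id_W\bigr)(u)=\sum_n\lambda_n(x_n-Tx_n)\otimes y_n$ and therefore
$$\|u\|_{\pi}\leq\sum_n|\lambda_n|\,\|x_n-Tx_n\|\,\|y_n\|\leq\varepsilon\cdot\sup_n\|y_n\|.$$
Letting $\varepsilon\to 0$ gives $u=0$, and unwinding the reduction of the second paragraph yields injectivity of $V\potimes W\to V\inotimes W$ for arbitrary $W$. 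I expect the first and last steps to be routine; essentially all the content sits in identifying $\iota(u)$ with the operator $u^{\sharp}$, which is where the properties of the injective tensor norm enter.
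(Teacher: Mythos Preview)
The paper does not actually prove this proposition: it is introduced with ``The following fact is well known'' and stated without argument, then used in the next proposition. So there is no paper proof to compare against; your write-up supplies the standard classical argument (essentially Grothendieck's), and the overall strategy---reduce to Banach $W$, identify $\iota(u)$ with the nuclear operator $u^{\sharp}\in\mathcal{L}(V',W)$, then use the approximation property to kill $u$ once $u^{\sharp}=0$---is correct.

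One small repair is needed in the last step. You claim $K=\{x_n:n\in\mN\}\cup\{0\}$ is compact, but from $\|x_n\|\le 1$ alone this fails in an infinite-dimensional Banach space. The fix is immediate: the representation theorem for elements of $V\potimes W$ (Banach case) lets you choose $\sum_n|\lambda_n|<\infty$ with \emph{both} $x_n\to 0$ and $y_n\to 0$; with $x_n\to 0$ the set $K$ is genuinely compact and the approximation-property argument goes through as written. Alternatively, keep your representation and absorb a factor $|\lambda_n|^{1/2}$ into $x_n$ to force $x_n\to 0$.
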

\begin{Prop}\label{embedding}Let $\mcA$ be a complete locally convex subalgebra of a $C^*$-algebra $A$, $B$ a $C^*$-algebra containing $\mathcal{L}^p$. Then  the canonical map $\mcA\potimes \Lp\to A\otimes_* B$ is injective.
\end{Prop}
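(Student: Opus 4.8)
The plan is to factor the canonical map through the minimal $C^*$-tensor product $A\otimes_* B$ and to recover injectivity by testing against the matrix-coefficient functionals of a fixed orthonormal basis of $\mh$. We may assume $\co\subseteq B$ with $\Lp\hookrightarrow\co\hookrightarrow B$ (replace $B$ by the $C^*$-subalgebra generated by $\Lp$; the operator norm is the only $C^*$-norm on $\Lp$ and $\co$ is simple, so that subalgebra is $\co$, the inclusion $\co\hookrightarrow B$ is isometric, and $\Lp\hookrightarrow B$ is continuous). The canonical map $\Phi\colon\mcA\potimes\Lp\to A\otimes_* B$ is continuous, since on $\mcA\odot\Lp$ one has $\|a\otimes t\|_{A\otimes_* B}\le\|a\|_A\,\|t\|_p$ and $a\mapsto\|a\|_A$ is a continuous seminorm on $\mcA$; so everything reduces to showing $\ker\Phi=0$.

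Fix an orthonormal basis $(\xi_i)$ of $\mh$, and for each $i,j$ the functional $\phi_{ij}\in(\Lp)^*$ given by $\phi_{ij}(T)=\langle T\xi_j,\xi_i\rangle$; it is $\|\cdot\|_p$-continuous because $|\phi_{ij}(T)|\le\|T\|_{\mathrm{op}}\le\|T\|_p$. Let $E_{ij}\colon A\otimes_* B\to A$ be the slice map attached to a continuous extension of $\phi_{ij}$ from $\co$ to $B$ (so $E_{ij}$ is continuous and $E_{ij}(a\otimes c)=\langle c\xi_j,\xi_i\rangle\,a$ for $c\in\co$). Comparing on elementary tensors and extending by continuity gives $E_{ij}\circ\Phi=\iota\circ(\id_\mcA\otimes\phi_{ij})$, where $\iota\colon\mcA\hookrightarrow A$ is the (injective) inclusion. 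Consequently, if $u\in\ker\Phi$, then $\iota\bigl((\id_\mcA\otimes\phi_{ij})(u)\bigr)=0$ for all $i,j$, and injectivity of $\iota$ forces $(\id_\mcA\otimes\phi_{ij})(u)=0$ in $\mcA$ for all $i,j$.

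It remains to deduce $u=0$ from the vanishing of all these ``matrix coefficients'', and this is where the approximation property of $\Lp$ enters. Let $Q_n$ be the orthogonal projection onto $\linspan(\xi_1,\dots,\xi_n)$ and put $S_n:=Q_n(\,\cdot\,)Q_n$. By the ideal property of the Schatten norm each $S_n$ is a contraction of $\Lp$ with finite-dimensional range $Q_n\Lp Q_n\cong M_n(\mathbb{C})$, and $S_n\to\id_{\Lp}$ pointwise (strong convergence $Q_n\to 1$ together with $\|\cdot\|_p$-density of the finite-rank operators) --- this is precisely the approximation property of $\Lp$ in the explicit form needed here. Hence the maps $\id_\mcA\otimes S_n$ are equicontinuous on $\mcA\potimes\Lp$ and converge to the identity on the dense subspace $\mcA\odot\Lp$, so $(\id_\mcA\otimes S_n)(u)\to u$ for every $u\in\mcA\potimes\Lp$. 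On the other hand $(\id_\mcA\otimes S_n)(u)$ lies in $\mcA\otimes(Q_n\Lp Q_n)$, a tensor product with a finite-dimensional factor and hence already complete, so it is determined by its finitely many coordinates $(\id_\mcA\otimes\phi_{ij})(u)$ with $i,j\le n$ (using that $\phi_{ij}\circ S_n=\phi_{ij}$ on $\Lp$ when $i,j\le n$), all of which vanish. Therefore $(\id_\mcA\otimes S_n)(u)=0$ for every $n$, and letting $n\to\infty$ yields $u=0$. This proves $\Phi$ injective, hence the Proposition.

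The main obstacle is to see that one cannot simply route the argument through injective tensor products: although $\mcA\potimes\Lp\hookrightarrow\mcA\inotimes\Lp$ by the previous Proposition (the approximation property of $\Lp$), there is no continuous map $\mcA\inotimes\Lp\to A\otimes_* B$ through which $\Phi$ factors, because the minimal $C^*$-norm strictly dominates the injective Banach cross-norm in general (already on $M_2\odot M_2$). The approximation property is genuinely used only for the convergence $(\id_\mcA\otimes S_n)(u)\to u$; the remaining work is bookkeeping, the one delicate point being the compatibility $E_{ij}\circ\Phi=\iota\circ(\id_\mcA\otimes\phi_{ij})$ of the slice maps on $A\otimes_* B$ with the corner maps $\id_\mcA\otimes S_n$ on $\mcA\potimes\Lp$, which is what transports the vanishing of the coordinates.
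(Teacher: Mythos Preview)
Your argument is correct: the slice-map/matrix-coefficient test together with the explicit truncations $S_n=Q_n(\cdot)Q_n$ does exactly what is needed, and the equicontinuity of $\id_\mcA\otimes S_n$ together with pointwise convergence on $\mcA\odot\Lp$ gives $(\id_\mcA\otimes S_n)(u)\to u$ as claimed.

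However, your closing paragraph mis-identifies the ``obstacle'' and overlooks the much shorter route the paper actually takes. You are right that there is no continuous map $\mcA\inotimes\Lp\to A\otimes_* B$ factoring $\Phi$, but one does not need such a map. Since $\|\cdot\|_*\ge\|\cdot\|_{\ee}$ there \emph{is} a continuous map $A\otimes_* B\to A\inotimes B$, and one gets a commutative square
\[
\xymatrix{\mcA\potimes\Lp\ar[r]^{\Phi}\ar[d]_{\mu}& A\otimes_* B\ar[d]\\
\mcA\inotimes\Lp\ar[r]_{\nu}& A\inotimes B.}
\]
Here $\mu$ is injective by the approximation property of $\Lp$ (the Proposition you cite), and $\nu$ is injective because the injective tensor product respects subspaces. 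Hence $\nu\circ\mu$ is injective, and since it factors through $\Phi$, so is $\Phi$. This is the paper's proof; it uses the same ingredient (approximation property of $\Lp$) but packages the ``separating by functionals'' step into the abstract fact that $\inotimes$ is functorial on inclusions, rather than writing out the matrix coefficients by hand. Your approach has the virtue of being completely self-contained and avoids invoking the injective tensor product altogether; the paper's is a two-line diagram chase once one remembers the inequality $\|\cdot\|_*\ge\|\cdot\|_{\ee}$.
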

\begin{proof}
We have a commutative diagram
\[\xymatrix{\mcA\potimes\mc{L}^p\ar[r]^\sigma\ar[d]_{\mu}& A\otimes_* B\ar[d]\\
						\mcA\inotimes \mc{L}^p\ar[r]_\nu& A\inotimes B}\]
where $\sigma$ is the composition 
$$\mcA\potimes \Lp\to A\potimes B \to A\otimes_* B.$$ 
Now $\mu$ is injective because $\mc{L}^p$ has the approximation property, and $\nu$ is injective because of the properties of the injective tensor product. Hence $\sigma$ is injective.
\end{proof}
The above proposition will be used (mostly) without further mention in the sequel. }{}
We denote by $\scy$ the locally convex algebra of differentiable functions from $[0,1]$ all of whose derivatives vanish at the endpoints, by $\sCo$ those that are zero in $1$, and by $\ssu$ those that are zero in both endpoints; ; we define $\ssu^n$ as the $n$-fold tensor product of $\ssu^n$ with itself, similarly for $\sCo^n$ and $\scy^n$. We further get functors mapping a locally convex algebra $\mcA$ to \index{$\scy \mcA$}$\scy \mcA$, \index{$\sCo \mcA$}$\sCo \mcA$ and \index{$\ssu \mcA$}$\ssu \mcA$ as in the $C^*$-setting. We write $ev^{\mcA}_t:\scy \mcA\to \mcA$\index{$ev^{\mcA}_t$} for the evaluations in $t\in I$. 

\begin{Def} Let $\phi_0,\phi_1:\mcA\to \mcB$ be homomorphisms of locally convex algebras. A diffotopy between $\phi_0$ and $\phi_1$ is by definition a homomorphism $\Phi:\mcA\to \scy \mcB$ such that $ev^{\mcB}_i\circ\Phi=\phi_i$ for $i=0,1$.
\end{Def}
We note from \cite{MR2387048} that this is not the same as a family of morphisms that is pointwise differentiable. 

\begin{Rem}\label{decompose} The well known decomposition for Hilbert spaces into matrices with respect to a projection carries over to locally convex algebras as follows:

Let $a\in \mcA$ and $p$ an idempotent in some larger algebra containing $\mcA$. Set
$$ a_1\coloneqq pap,\; a_2\coloneqq pa(1-p),\; a_3\coloneqq (1-p)ap,\; a_4\coloneqq (1-p)a(1-p),$$ which entails $a_1+a_2+a_3+a_4=a.$
As all of the subalgebras $p\mcA p$, $(1-p)\mcA(1-p)$, $(1-p)\mcA p$, and $p\mcA(1-p)$ have empty intersection, $\mcA$ is isomorphic to the inner direct sum of them (as a locally convex space). If we write the $a$ as a $2\times 2$-matrix
$$a=\mb a_1&a_2\\a_3&a_4\me, $$
then the multiplication on $\mcA$ corresponds to the multiplication of matrices. This is usually called a Morita context (see e.g. \cite{MR2052770}); we have reserved this expression for a more analytic version of a context below.
\end{Rem}

\section{Functors and exact sequences}
We collect some general statements concerning functors and exact sequences of locally convex algebras.
\begin{Def}\label{semisplit} An exact sequence 
\[\xymatrix{0\ar[r]&{\mcC}\ar[r]&{\mcB}\ar[r]&\mcA\ar[r]&0}\]
of locally convex algebras will be called semisplit if it is split as a sequence of topological vector spaces. It will be called a split exact sequence, if it is split in the category of locally convex algebras. It will be called a double split sequence, if there are two splits for the quotient map.
\end{Def}

\begin{Def} A functor $H$ on the category of locally convex algebras with values in the category of abelian groups will be called split exact, if  for every split exact sequence
\[\xymatrix{ 0\ar[r]&{\mcC}\ar[r]&{\mcB}\ar[r]&\mcA\ar[r]&0}\]
of locally convex algebras the sequence
\[\xymatrix{ 0\ar[r]&H({\mcC})\ar[r]&H({\mcB})\ar[r]&H(\mcA)\ar[r]&0}\]
is exact.
\end{Def}
\begin{Rem}\label{Stuff}
If $H$ is split exact, then the sequence obtained by applying $H$  automatically splits.

\end{Rem}
\begin{Def}\label{functorprops} Let $H$ be a functor from locally convex algebras to abelian groups, and $J$ a locally convex algebra containing an idempotent $p$. Then $H$ is called
\begin{itemize}
\item $J$-stable if $H(\iota)$ is an isomorphism, denoted $\theta^J_\mcA$, for every locally convex algebra $\mcA$, where $\iota:{\mcA}\to {\mcA}\otimes_\pi J,\;a\mapsto a\otimes p$.
\item invariant under inner automorphisms if $H(Ad_U)=\id_{H({\mcA})}$ for every invertible $U\in {\mcA}$,
\item  pointwise diffotopy invariant, if for every family $\phi_t:{\mcA}\to {\mcB}$ of morphisms of locally convex algebras such that $\phi_t(a)$ is smooth for all $a\in {\mcA}$, $H(\phi_0)=H(\phi_1)$.
\item  diffotopy invariant if for every homomorphism $\phi:{\mcA}\to \scy {\mcB}$ we have $H(ev_0)\circ H(\phi)=H(ev_1)\circ H(\phi)$.
\end{itemize}
\end{Def}
\begin{Rem} 

If $P$ is a class of idempotents in  $J$ that are all conjugate to another, in the sense that $J$ is an ideal in a larger algebra $\hat J$ such that for any two minimal projections $p$ and $q$ there is an invertible element $u\in \hat J$ with $u^{-1}pu=q$, then the definition does not depend on the choice of idempotent in $P$ if we suppose that $H$ is invariant under conjugation by such $u$ (of course it suffices that conjugation by $u$ is continuous ${\mcB}\to {\mcB}$, so one can weaken the ideal condition).

For example, the minimal projections in the compacts or the $p$-summable operators have this property.
\end{Rem}

We then have the following properties:
\begin{Prop}\label{functorgeneralities} On the category of locally convex algebras, if $H$ is a functor with values in abelian groups, then
\begin{enumerate}
\item\label{sum} if $H$ is split exact and  $\phi$ and $\psi$ are orthogonal homomorphisms,  $H(\phi+\psi)=H(\phi)+H(\psi)$ 
\item if $H$ is $M_n$ stable for some $n$, then $H$ is $M_m$-stable for all $m\in\mN$
\item\label{Mtwo} if $H$ is $M_2$-stable, then it is invariant under inner automorphisms
\item\label{naturalinclusion} if $H$ is $\mathcal{L}^p$-stable , then the  natural map $M_n({\mcA})\hookrightarrow {\mcA}\otimes_\pi \mathcal{L}^p$ induces an isomorphisms
\item\label{ellptomtwo} $\mathcal{L}^p$-stability implies $M_n$-stability
\item\label{ntimesstab} $H(\iota)=n\theta^{M_n}$ if $H$ is $M_n$-stable and additive, with  $\iota:{\mcA}\to M_n ({\mcA}),a\mapsto 1_n\otimes a$ the canonical inclusion; similarly, $H(\iota)=n\theta^{\mc{L}^p}$ if we view $M_n ({\mcA})$ as a sitting inside ${\mcA}\otimes_\pi\mc{L}^p$.
\end{enumerate}
\end{Prop}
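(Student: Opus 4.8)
The plan is to separate the three ``soft'' assertions---\ref{sum}, (ii) and~\ref{Mtwo}---whose $C^*$-algebraic proofs use nothing but the algebra structure and hence transcribe verbatim, from \ref{naturalinclusion}, \ref{ellptomtwo} and~\ref{ntimesstab}, which genuinely exploit the behaviour of $\potimes$ under finite matrix amplification. For~\ref{sum}: since $\phi\perp\psi$ the sum $\phi+\psi$ is again a homomorphism, and I would invoke the standard additivity of split-exact functors, whose proof is a formal diagram chase (e.g.\ via a suitable semidirect product $\mcB\rtimes_{\phi+\psi}\mcA$ sitting in a split extension over~$\mcA$, cf.\ \cite{MR1456322}) that never leaves the category of algebras.

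For~(ii): applying $M_n$-stability to $M_m(\mcA)$ shows the corner embedding $M_m(\mcA)\hookrightarrow M_{nm}(\mcA)$ is an isomorphism under $H$; composing, $\mcA\hookrightarrow M_{n^k}(\mcA)$ is one for every $k$. The argument for~\ref{Mtwo} works for every $M_n$ with $n\ge 2$, so $H$ is invariant under inner automorphisms and consequently any two corner embeddings $M_m(\mcA)\hookrightarrow M_{m'}(\mcA)$ induce the same morphism under $H$; factoring $\mcA\hookrightarrow M_m(\mcA)\hookrightarrow M_{n^k}(\mcA)$ (with $n^k\ge m$) and comparing with $\mcA\hookrightarrow M_{n^k}(\mcA)$ and $M_m(\mcA)\hookrightarrow M_{mn^k}(\mcA)=M_{n^k}(M_m(\mcA))$---all three isomorphisms under $H$---forces $\mcA\hookrightarrow M_m(\mcA)$ to be one, i.e.\ $M_m$-stability. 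Part~\ref{Mtwo} itself is Higson's lemma: for invertible $u$ the homomorphisms $a\mapsto\diag(a,0)$ and $a\mapsto\diag(uau^{-1},0)$ into $M_2(\mcA)$ are conjugate by $\diag(u,u^{-1})$, which the Whitehead lemma writes as a product of elementary matrices over $\widetilde{\mcA}$; conjugation by each elementary matrix acts trivially on such classes, and since $a\mapsto\diag(a,0)$ represents $\theta^{M_2}_{\mcA}$ (an isomorphism), $H(\Ad_u)=\id$.

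For~\ref{ellptomtwo} I would show that $a\mapsto a\otimes e_{11}\colon\mcA\to M_n(\mcA)$ is an isomorphism under $H$ via the composite
\[
\mcA\xrightarrow{\ a\mapsto a\otimes e_{11}\ }M_n(\mcA)\xrightarrow{\ X\mapsto X\otimes p\ }M_n(\mcA)\potimes\Lp ,
\]
$p$ a rank-one projection in $\Lp$. The second arrow is $\theta^{\Lp}_{M_n(\mcA)}$, an isomorphism under $H$ by $\Lp$-stability; under the canonical topological isomorphism $M_n(\mcA)\potimes\Lp=\mcA\potimes M_n\potimes\Lp\cong\mcA\potimes\Lp$ (which rests on $M_n\potimes\Lp=M_n(\Lp)\cong\Lp$) the composite becomes $a\mapsto a\otimes q$ for a rank-one $q$, i.e.\ $\theta^{\Lp}_{\mcA}$, again an isomorphism; hence so is $H(a\mapsto a\otimes e_{11})$, which is $M_n$-stability (and then~(ii) applies). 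For~\ref{naturalinclusion}: the natural inclusion $M_n(\mcA)\hookrightarrow\mcA\potimes\Lp$ precomposed with $a\mapsto a\otimes e_{11}$---an isomorphism under $H$ by~\ref{ellptomtwo}---is $\theta^{\Lp}_{\mcA}$, so $H$ of the inclusion is an isomorphism.

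Finally, for~\ref{ntimesstab}: in $M_n(\mcA)$ one has $\iota(a)=1_n\otimes a=\sum_{i=1}^n a\otimes e_{ii}$, and the homomorphisms $\delta_i\colon a\mapsto a\otimes e_{ii}$ are pairwise orthogonal, so by additivity $H(\iota)=\sum_{i=1}^n H(\delta_i)$; since $\delta_i=\Ad_{w_i}\circ\delta_1$ for a transposition matrix $w_i\in GL_n(\widetilde{\mcA})$, part~\ref{Mtwo} gives $H(\delta_i)=H(\delta_1)=\theta^{M_n}_{\mcA}$, whence $H(\iota)=n\,\theta^{M_n}_{\mcA}$; the identical computation inside $\mcA\potimes\Lp$---with $e_{11},\dots,e_{nn}$ pairwise orthogonal rank-one projections, pairwise conjugate by unitaries of $\bo$, under which $H$ is invariant by the convention built into $\Lp$-stability---yields $n\,\theta^{\Lp}_{\mcA}$. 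I expect the only real care to be needed in~\ref{ellptomtwo}/\ref{naturalinclusion}: one must check that $M_n(\mcA)\potimes\Lp\cong\mcA\potimes\Lp$ and $M_n(\mcA)\hookrightarrow\mcA\potimes\Lp$ are continuous homomorphisms composing as claimed, and---to keep $\mcA\potimes\Lp$ consistently inside the ambient $C^*$-tensor product---that $\Lp$ has the approximation property; the algebraic parts present no new obstacle beyond their $C^*$-counterparts.
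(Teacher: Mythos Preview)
Your argument is essentially correct and parallels the paper's, with one organizational difference and one point worth flagging.

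On organization: you derive \ref{ellptomtwo} first and then \ref{naturalinclusion} as a corollary, whereas the paper proceeds in the opposite order---it factors the inclusion $M_n(\mcA)\hookrightarrow\mcA\potimes\Lp$ as the stabilisation $M_n(\mcA)\to M_n(\mcA)\potimes\Lp$ followed by the identification $\Lp(\hsp^n)\cong\Lp(\hsp)$ to obtain \ref{naturalinclusion}, and then reads off \ref{ellptomtwo} from the commutative triangle $\theta^{\Lp}_\mcA=H(\iota\otimes\id_\mcA)\circ\theta^{M_n}_\mcA$. The content is the same either way, and your treatment of \ref{sum}, (ii) and \ref{ntimesstab} matches the paper's.

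The point to flag is your sketch of \ref{Mtwo}. The Higson/Whitehead argument you invoke---writing $\diag(u,u^{-1})$ as a product of elementary matrices and asserting that conjugation by each acts trivially---relies on homotopy invariance (the straight-line path $t\mapsto I+tx\,e_{ij}$ is what makes conjugation by an elementary matrix act trivially). Here no diffotopy invariance is assumed, and conjugation by an elementary matrix does \emph{not} fix the corner embedding $a\mapsto\diag(a,0)$ on the nose, so the step ``conjugation by each elementary matrix acts trivially on such classes'' is unjustified as stated. The paper's route (citing \cite{MR2207702}) is different and purely algebraic: one first argues that the two corner embeddings $\iota_1,\iota_2:\mcA\to M_2(\mcA)$ induce the \emph{same} map under any $M_2$-stable functor; then, since $\Ad_{\diag(u,1)}\circ\iota_2=\iota_2$ while $\Ad_{\diag(u,1)}\circ\iota_1=\iota_1\circ\Ad_u$, invertibility of $H(\iota_1)=H(\iota_2)$ forces $H(\Ad_u)=\id$. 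You should either add the hypothesis of diffotopy invariance to your sketch (which is harmless in the paper's applications, by Theorem~\ref{diffotopyinvariance}) or switch to this argument.
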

\begin{proof}
\ref{sum}: This follows as $H$ preserves direct sums:
$$H(A\oplus B)=H(A)\oplus H(B).$$
Therefore, $H(\Delta_A)=\Delta_{H(A)}$, where we denote by $\Delta :A\to A\oplus A, a\mapsto (a,a)$ the diagonal. The result follows from $\phi+\psi=\phi\oplus\psi\circ \Delta_A$  (compare, for example, \cite{MR2207702}, Proposition 3.1.2). \note{Some more details are in \cite{CuntzMeyer}, proof of Proposition 3.3, in the preliminary version.}

\ref{Mtwo} is also proved in \cite{MR2207702}: If $U\in {\mcA}$ is invertible, then \scalebox{0.7}{$\mb U& \\&1\me$} defines an automorphism of ${\mcA}\otimes M_2$. As the two natural inclusions of ${\mcA}$ in ${\mcA}\otimes M_2$ are the same under any $M_2$-stable functor, $U$ acts trivially.

If $\hsp$ is a separable Hilbert-space, then factoring the natural inclusion $\iota:M_n\to \mathcal{L}^p(\hsp)$ as the stabilisation $M_n\to M_n\otimes_\pi \mc{L}^p(\hsp)$ followed by the canonical identification $\mc{L}^p(\hsp^n)\approx \mc{L}^p(\hsp)$ - which both induce isomorphisms - we get \ref{naturalinclusion} in the scalar case; tensoring by ${\mcA}$ gives the general statement.

\ref{ellptomtwo} follows now from commutativity of 
\[\xymatrix{H(\mcA)\ar[dr]^{\theta_{\mcA}^{\mc{L}^p}}\ar[d]_{\theta_{\mcA}^{M_n}}\\
 H(M_n(\mcA))\ar[r]_{H(\iota\otimes \id_A)}&H(\mc{L}^p\potimes \mcA)
}\]

The first part of \ref{ntimesstab} follows from  \ref{Mtwo}, and entails the second.
\end{proof}

We restate the following fundamental result from \cite{MR2207702}, Theorem 4.2.1, in the form appropriate for later use (using Proposition \ref{functorgeneralities} -- \ref{ellptomtwo}):
\begin{Th}\label{diffotopyinvariance} Every functor from the category of locally convex algebras to the category of abelian groups which is split exact  and $\mc{L}^p$-stable for some $p\geq 2$ is diffotopy invariant.
\end{Th}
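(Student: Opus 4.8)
The plan is to reduce diffotopy invariance of a split exact, $\mc{L}^p$-stable functor $H$ (for $p \geq 2$) to the statement that $H$ kills the algebra $\ssu$, or equivalently that $H$ of the evaluation maps on $\sCo$ agree; the engine for this is the standard ``smooth homotopy $\Rightarrow$ stable + split exact'' mechanism adapted to the diffotopy setting. First I would recall that it suffices to show $H(ev^{\mcB}_0) = H(ev^{\mcB}_1): H(\scy\mcB) \to H(\mcB)$ for every $\mcB$, since a diffotopy is precisely a homomorphism $\Phi: \mcA \to \scy\mcB$ with $ev^{\mcB}_i \circ \Phi = \phi_i$, so $H(\phi_0) = H(ev_0) \circ H(\Phi)$ and $H(\phi_1) = H(ev_1) \circ H(\Phi)$ would then agree. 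Equivalently, applying the five-lemma-type argument to the split exact sequence $0 \to \sCo\mcB \to \scy\mcB \xrightarrow{ev_1} \mcB \to 0$ (split exact because $ev_1$ has a constant-function section), it is enough to show $H(\sCo\mcB) = 0$, and then that the restriction of $ev_0$ to $\sCo\mcB$ induces zero; in fact I would aim directly at showing $H(\sCo\mcB) = 0$ for all $\mcB$, since $\sCo\mcB$ is the analogue of the cone and has a ``smooth Eilenberg swindle''.

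The key step is the Eilenberg swindle. The cone $\sCo\mcB$ (functions on $[0,1]$, smooth with all derivatives vanishing at endpoints, vanishing at $1$) should admit a swindle in the $\mc{L}^p$-stable world: one writes an infinite ``diagonal'' diffotopy that pushes a copy of $\sCo\mcB$ off to infinity inside $\sCo\mcB \otimes_\pi \mc{L}^p$, exploiting that $\sCo\mcB$ is contractible \emph{via a diffotopy} (the scaling $f(t) \mapsto f(st)$, which is a genuine diffotopy precisely because the flatness-at-endpoints condition is preserved — this is the reason one works with $\scy$, $\sCo$, $\ssu$ rather than the naive $C^\infty[0,1]$). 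So: $\sCo\mcB$ is diffotopic to $0$; hence for a diffotopy invariant functor $H(\sCo\mcB)=0$ — but we have not yet established diffotopy invariance, that is what we are proving. The correct logical order is the classical one: use $\mc{L}^p$-stability plus split exactness to run the swindle \emph{by hand} and conclude $H(\sCo\mcB)=0$ from a telescoping identity $\theta = \theta + (\text{stuff on } \sCo\mcB)$, forcing the $\sCo\mcB$-contribution to vanish; then feed that back to get diffotopy invariance.

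Concretely I would proceed as follows. (1) Show $\sCo\mcB$ carries an ``infinite orthogonal direct sum of itself'' structure inside $\sCo\mcB \otimes_\pi \mc{L}^p$ using the minimal rank-one idempotents $e_{nn}$, and that shifting by one is implemented by (conjugation by) a unitary-type element, so that by Proposition \ref{functorgeneralities}\,(\ref{Mtwo}) and $\mc{L}^p$-stability the inclusion of one summand is split-exactly complemented by the inclusion of the rest. (2) Use the scaling diffotopy $c_s: f(t) \mapsto f(st)$ on $\sCo$ to build, \emph{algebraically}, a homomorphism exhibiting $\id_{\sCo\mcB} \oplus (\text{shift}) \sim \text{shift}$ in a way that $H$ detects: this is where the swindle forces $H(\sCo\mcB)$ to be idempotent-and-absorbing, i.e.\ zero, once one combines with Proposition \ref{functorgeneralities}\,(\ref{sum}) (additivity on orthogonal homomorphisms, available since $H$ is split exact). (3) With $H(\sCo\mcB)=0$ in hand, the split exact sequence $0 \to \sCo\mcB \to \scy\mcB \xrightarrow{ev_1}\mcB\to 0$ gives $H(ev_1)$ an isomorphism, and analysing $ev_0$ relative to this (its restriction to $\sCo\mcB$ factors through $H(\sCo\mcB)=0$ after reparametrising so that the relevant cone sits at the $t=0$ end) yields $H(ev_0)=H(ev_1)$. (4) Conclude diffotopy invariance by the factorisation through $H(\Phi)$ above.

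The main obstacle I anticipate is step (2): making the Eilenberg swindle \emph{honestly algebraic} in the locally convex, projective-tensor setting, where one cannot freely pass to norm limits and must control that the infinite ``block-diagonal'' elements actually lie in $\sCo\mcB\otimes_\pi \mc{L}^p$ and that the reparametrisation homomorphisms are continuous for the relevant seminorms. The flatness-at-endpoints condition defining $\scy, \sCo, \ssu$ is doing real work here — it is exactly what makes the scaling maps and their ``spread-out to infinity'' variants into bona fide morphisms of locally convex algebras — and verifying the requisite seminorm estimates (that the infinite sum converges in the projective tensor topology after pairing the rapidly-shrinking supports of the scaled functions against the decay of $\mc{L}^p$-matrix units) is the technical heart. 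Everything else is formal manipulation with Proposition \ref{functorgeneralities}.
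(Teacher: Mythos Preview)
The paper gives no proof of this theorem: it is simply quoted from \cite{MR2207702}, Theorem 4.2.1, with the remark that Proposition~\ref{functorgeneralities}\,\ref{ellptomtwo} furnishes the matrix-stability used there. So there is nothing in the paper itself to compare your argument against.

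On its own merits, your outline has a genuine gap at step~(2), and it is precisely the circularity you flag but do not resolve. An Eilenberg swindle on the cone needs an identity of orthogonal homomorphisms $\Psi=\theta\oplus(\text{shift}\circ\Psi)$ into $\sCo\mcB\otimes_\pi\mc{L}^p$, from which additivity and inner-automorphism invariance would give $H(\theta)=0$. Writing $\Psi(f)=\diag(\alpha_0(f),\alpha_1(f),\ldots)$, that identity forces all $\alpha_n=\id$, and the resulting constant-diagonal map does not converge in $\sCo\mcB\otimes_\pi\mc{L}^p$. If you damp with scalings $\alpha_n=c_{s^n}$ (compressing towards the vanishing endpoint so that the map at least has a chance of landing in the tensor product), the best \emph{exact} identity available is $\Psi=\theta\oplus(\text{shift}\circ\Psi\circ c_s)$, and cancelling $H(\Psi)$ from both sides then requires $H(c_s)=\id$ --- which is diffotopy invariance, the statement under proof. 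Your phrase ``build, algebraically, a homomorphism exhibiting $\id\oplus\text{shift}\sim\text{shift}$'' does not say how to escape this circle, and I do not see that a bare swindle can.

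A second warning sign is that your argument never uses the hypothesis $p\geq 2$. The proof in \cite{MR2207702} is not an abstract swindle on the cone: it builds, from the Weyl algebra (equivalently the harmonic oscillator), explicit operators whose resolvents and commutators satisfy Schatten-class estimates, and assembles these into a quasihomomorphism that directly forces $H(ev_0)=H(ev_1)$. The constraint on $p$ enters through those analytic estimates. The mechanism is thus much closer in spirit to the spectral-triple material in the later sections of this paper than to an Eilenberg swindle.
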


\section{Morita contexts and split exact functors}\label{contexts}
The following definition of Morita context is basically from \cite{MR2240217}, but we carry it over to arbitrary functors.  We also add some isomorphisms to the definitions in \cite{MR2240217} in order to make the existence of a Morita-context a weaker condition than being isomorphic.
\begin{Def}\label{contextcond}
Let $A$ and $B$ be locally convex algebras. Then a Morita-context from $A$ to $B$ is given by data $(\phi,D,\psi,\xi_i,\eta_i)$\index{$(\phi,D,\psi,\xi_i,\eta_i)$}, where $D$ is a locally convex algebra, $\phi:A\to D$, $\psi:B\to D$ are isomorphisms onto subalgebras of $D$, and sequences $\eta_i$, $\xi_i$ in $D$ such that 
\begin{enumerate}
\item $\eta_i \phi(A)\xi_j\in\psi(B)$ for all $i,j$
\item $(\eta_i \phi(a)\xi_j)_{ij}\in\sco\otimes_\pi \psi(B)$
\item\label{convergencecontext} $\sum \xi_i\eta_i \phi(a)=\phi(a)$ for all $a\in A$ (convergence in $\phi(A)$).
\end{enumerate}
\end{Def}
With this definition:
\begin{itemize}
\item If $\phi:A\to B$ is an isomorphism, then we get a Morita context $(\phi^+,B^+,\cdot^+)$ from $A$ to $B$, where $B^+$ is the unitization of $B$, $b\mapsto b^+$ the canonical embedding, and $\phi^+=\phi\circ\cdot^+$
\item In particular, there is now a canonical Morita-context from $A$ to $A$, for any locally convex algebra $A$
\item If $B\subseteq \sco\otimes_\pi C$ is a subalgebra, and we call a corner in $B$ a subalgebra $A\subseteq B$ of the form $\sum_{i=0}^k e_{0i}B\sum_{i=0}^k e_{i0}$, then there is a trivial context from $A$ to $B$
\item If $B$ is row and column-stable ($e_{0i}B,Be_{i0}\subseteq B$ for all $i$), then there is a context from $B$ to $A$, where $B$ is a subalgebra of $\sco\otimes_\pi A$.
\end{itemize}
 
\begin{Def} Let $H$ be a   functor on the category of locally convex algebras. Then we define $\theta:A\to \sco\otimes_\pi B,\,a\mapsto (\psi^{-1}(\eta_i\phi(a)\xi_j))_{ij}$ and $H(\phi,D,\psi,\xi_i,\eta_i):=H(\theta)$.
\end{Def}
Note that for $a,a'\in A$ we have 
\begin{align*}
(\psi^{-1}(\eta_i\phi(a)\xi_j))_{ij}(\psi^{-1}(\eta_k\phi(a')\xi_l))_{kl}=(\psi^{-1}(\eta_i\phi(a)\sum_{m}\xi_m\eta_m \phi(a')\xi_j))_{ij}
\end{align*}
which equals $\theta(aa')$ by \ref{convergencecontext} in the definition of a Morita context. Hence $\theta$ is indeed a homomorphism.
\begin{Def} A Morita-bicontext between locally convex algebras $A$ and $B$ is given by two Morita-contexts from $A$ to $B$ and $B$ to $A$ respectively, of the form $(\phi, D,\psi,\xi_i^A,\eta_i^A)$ and $(\psi,D,\phi,\xi_i^B,\eta_i^B)$ such that 
\begin{enumerate}
\item $\phi(A)\xi_i^A\xi_j^B\subseteq\phi(A)$, $\eta_i^B\eta_j^A\phi(A)\subseteq \phi(A)$ (left compatibility)
\item $\psi(B)\xi_i^B\xi_j^A\subseteq \psi(B)$, $\eta_i^A\eta_j^B\psi(B)\subseteq \psi(B)$ (right compatibility).
\end{enumerate}
\end{Def}
\begin{Th}\label{contweakeriso} Given two Morita contexts as in the above definition, and a diffotopy invariant, $\sco$-stable functor $H$, we have
$$H(\psi,D,\phi,\xi_i^B,\eta_i^B)\circ H(\phi, D,\psi,\xi_i^A,\eta_i^A)= \id_{H(A)}\text{ if they are left compatible }$$
$$ H(\phi, D,\psi,\xi_i^A,\eta_i^A)\circ H(\psi,D,\phi,\xi_i^B,\eta_i^B)=\id_{H(B)}\text{ if they are right compatible}$$
\end{Th}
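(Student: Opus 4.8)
The plan is to reduce the statement to a single diffotopy together with one application of $\sco$-stability, exactly in the spirit of the corner-argument used to prove Morita invariance for $KK$. First I would form the homomorphism $\theta_A\colon A\to\sco\potimes B$ associated to the first context and $\theta_B\colon B\to\sco\potimes A$ associated to the second, so that the left-hand side of the first equation is $H(\theta_B')\circ H(\theta_A)$, where $\theta_B'\colon\sco\potimes B\to\sco\potimes(\sco\potimes A)\cong\sco\potimes A$ is $\id_{\sco}\potimes\theta_B$ followed by the canonical identification $\sco\potimes\sco\cong\sco$ (which induces an isomorphism because $H$ is $\sco$-stable, cf. Proposition \ref{functorgeneralities}). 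By functoriality this composite is $H$ applied to the single homomorphism $\Theta\coloneqq\theta_B'\circ\theta_A\colon A\to\sco\potimes A$, which sends $a\in A$ to the matrix whose $(ik),(jl)$ entry is $\phi^{-1}\bigl(\eta^B_k\,\eta^A_i\,\phi(a)\,\xi^A_j\,\xi^B_l\bigr)$; note this expression is a well-defined element of $\phi(A)$ precisely by the left-compatibility hypotheses, so here is where that condition is used. Under the stabilisation isomorphism $\theta^{\sco}_A\colon H(A)\xrightarrow{\sim} H(\sco\potimes A)$, proving the first identity amounts to showing $H(\Theta)=\theta^{\sco}_A$.

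Next I would exhibit an explicit diffotopy from $\Theta$ (reindexed onto the diagonal) to the standard corner embedding $a\mapsto a\otimes e_{00}$ whose induced map is $\theta^{\sco}_A$. The key computational input is the convergence relation \ref{convergencecontext}, $\sum_i\xi^A_i\eta^A_i\phi(a)=\phi(a)$ and the analogous relation $\sum_k\xi^B_k\eta^B_k\psi(b)=\psi(b)$ for the second context; combined with left compatibility these say that $\Theta$, regarded as living in $\sco\potimes A$ via a fixed isomorphism $\sco\potimes\sco\cong\sco$, is a partial-isometry conjugate of the corner inclusion. Concretely I would build, inside the matrix algebra over $A^+$ (or rather over $D$), a path of isometries/partial isometries $u_t$ obtained from truncating and rotating the "frame" $(\xi^A_i)$, $(\xi^B_k)$ in the standard way (a finite rotation homotopy $\left(\begin{smallmatrix}\cos t&-\sin t\\\sin t&\cos t\end{smallmatrix}\right)$-type argument on $\sco\potimes\sco$), and check that conjugation by $u_t$ carries $a\mapsto a\otimes e_{00}$ to $\Theta$ through homomorphisms $A\to\scy(\sco\potimes A)$ depending differentiably on $t$; that all seminorm estimates survive the truncation is where the defining condition $(\eta_i\phi(a)\xi_j)_{ij}\in\sco\potimes\psi(B)$ (condition \ref{contextcond}(ii), and its analogue) does its work, guaranteeing convergence in the projective-tensor seminorms along the path. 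Applying the diffotopy-invariant functor $H$ then yields $H(\Theta)=H(a\mapsto a\otimes e_{00})=\theta^{\sco}_A$, which is the first equation. The second equation follows by the symmetric argument with the roles of $A,B$ and of left/right compatibility exchanged.

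The main obstacle I expect is the homotopy/convergence bookkeeping in the middle step: unlike the $C^*$-case, where one works with honest isometries on a Hilbert module and norm estimates are automatic, here one must produce a genuine \emph{diffotopy} — a homomorphism into $\scy$ of the stabilised algebra — and verify continuity of the multiplication and of all the seminorms $p\otimes q$ uniformly in the parameter $t$, using only conditions \ref{contextcond}(ii)–(iii). In particular the infinite frame $(\xi^A_i)$ must be handled by passing to finite truncations, pushing the tail to zero using the $\sco$-valued summability condition, and interpolating; making this precise while keeping everything a locally convex algebra homomorphism at each $t$ (rather than merely a linear map) is the delicate point, and it is also the place where diffotopy invariance, as opposed to mere homotopy invariance, is genuinely needed since the natural path is smooth but not norm-continuous into a $C^*$-completion. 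Once that path is in hand, the rest is formal: functoriality collapses the composition to a single $H(\Theta)$, $\sco$-stability identifies the target, and diffotopy invariance finishes it.
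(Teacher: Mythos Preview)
Your overall strategy coincides with the paper's: reduce to the single homomorphism $(\sco\otimes\theta_B)\circ\theta_A:A\to\sco\potimes\sco\potimes A$, then exhibit a rotation diffotopy to the corner embedding and invoke $\sco$-stability. Your identification of left compatibility as precisely what is needed to make the matrix entries $\phi^{-1}(\eta^B_k\eta^A_i\phi(a)\xi^A_j\xi^B_l)$ land in $\phi(A)$ is also exactly right.

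Where you diverge is in the execution of the diffotopy, and here you are making life harder than necessary. You propose to build partial isometries $u_t$ in a matrix algebra over $D$, conjugate, truncate the frame, and then control tails; you flag this as the main obstacle. The paper avoids all of this. It writes down the diffotopy directly at the level of Morita-context data: enlarge the index set to $L=\{0\}\cup\mathbb{N}^2$ and put
\[
\hat\eta_0(t)=\cos t,\quad \hat\eta_{(k,i)}(t)=\sin t\,\eta^B_k\eta^A_i,\qquad
\hat\xi_0(t)=\cos t,\quad \hat\xi_{(j,l)}(t)=\sin t\,\xi^A_j\xi^B_l,
\]
so that $a\mapsto\bigl(\phi^{-1}(\hat\eta_\alpha(t)\phi(a)\hat\xi_\beta(t))\bigr)_{\alpha\beta}$ interpolates between the corner embedding ($t=0$) and (a shift of) $\Theta$ ($t=\pi/2$). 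No truncation is needed: condition~(ii) of both contexts, together with left compatibility for the cross terms $\phi(a)\xi^A_j\xi^B_l$ and $\eta^B_k\eta^A_i\phi(a)$, already forces the full infinite matrix into $\sco\potimes\phi(A)$ for every $t$; multiplicativity for each $t$ follows from the two convergence relations $\sum\xi^A_i\eta^A_i\phi(a)=\phi(a)$ and $\sum\xi^B_i\eta^B_i\psi(b)=\psi(b)$, combined via the same computation that shows $\Theta$ itself is a homomorphism. So the ``delicate point'' you anticipate is handled by the axioms of a Morita context rather than by any analytic approximation argument, and there is no need to pass through explicit isometries or unitalisations.
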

\begin{proof}
The proof is an adaptation of the one from \cite{MR2240217} Lemma 7.2. Denote the isomorphism $H(A)\to H(\sco\potimes A)$ given by $\sco$-stability by $\ee_A$. Then more precisely, we have to show that
\begin{align*}
H(\phi,D,\psi,\xi_i^B,\eta_i^B)\circ \ee_B^{-1}\circ H(\phi, D,\psi,\xi_i^A,\eta_i^A)
\end{align*}
is invertible. We suppose left compatibility; then denoting $\theta^A$ and $\theta^B$ the maps $A\to \sco\potimes B$ and $B\to\sco\potimes A$ determined by the two contexts, and multiplying by $\ee_{\sco\potimes A}$  on the left, we see that  it suffices to show that the composition $(\sco\otimes\theta_B)\circ\theta_A$ induces an invertible map under $H$. Now this is the map
\begin{equation}\label{compo}a\mapsto (\phi^{-1}(\eta_i^B(\eta_k^A\phi(a)\xi_l^A)\xi_j^B))_{iklj}\end{equation}
and it is diffotopic to the stabilisation as follows. The $L\times L$ matrix with entries $\phi^{-1}(\hat\eta_\alpha(t)\phi(a)\hat\xi_\beta(t))$, $\alpha,\beta\in \mN^2\cup\{0\}$ with
\begin{align*}
\hat\xi_0(t)=\cos(t) 1\hspace{2cm} \hat\xi_{il}(t)=\sin(t)\xi_i^B\xi_l^A\\
\hat\eta_0(t)=\cos(t) 1\hspace{2cm}\hat\eta_{kj}=\sin(t) \eta_k^B\eta_j^A
\end{align*}
yields a diffotopy of the map in equation \ref{compo}.
\end{proof}

\section{Quasihomomorphisms and induced morphisms}

The notion of quasihomomorphisms was introduced in \cite{MR733641}, and used for example in \cite{MR2207702} to induce elements in $kk$. We modify it according to our needs.
\begin{Def} Let $\mcA$ and $ \mcB$ be locally convex algebras, $\hat\mcB$ an algebra containing $\mcB$ as a subalgebra. Then an $\LC$-quasihomomorphism from $\mcA$ to $\mcB$ (relative to $\hat{\mcB}$) is given by a
pair $(\alpha,\bar\alpha)$ of morphisms ${\mcA}\to \hat{\mcB}$ such that the maps
\begin{itemize}
\item ${\mcA}\to \hat{{{\mcB}}},a\mapsto \alpha(a)-\bar\alpha(a)$, 
\item ${\mcA}\times {{\mcB}}\to \hat{{{\mcB}}},\, (a,b)\mapsto \alpha(a)b$ and 
\item ${{\mcB}}\times{\mcA}\to \hat{{{\mcB}}},\, (b,a)\mapsto b\alpha(a)$
\end{itemize}
are all three actually ${{\mcB}}$-valued and continuous (with respect to the topology on $\mcA$ and  ${{\mcB}}$). We denote such a quasihomomorphism by $(\alpha,\bar\alpha):{\mcA}\rrarrow \hat{{{\mcB}}}\trianglerighteq {{\mcB}}$.

If $\phi:{\mcC}\to \mcA$ is a homomorphism of locally convex algebras, then  $(\alpha\circ\phi,\bar\alpha\circ \phi):{\mcC}\rrarrow \hat {\mcB}\trianglerighteq {\mcB}$ defines a quasihomomorphism which we will denote $(\alpha, \bar\alpha)\circ \phi$.
\end{Def}

Split exact functors have additional functoriality properties with respect to quasihomomorphisms. We first define the objects involved in the construction.
\begin{Lem}\label{newalgebra} Let $\mcA$ be a locally convex algebra,  $\hat {\mcB}$ an algebra and ${\mcB}$ a locally convex subalgebra of $\hat {\mcB}$, and $\alpha:\mcA\to \hat {\mcB}$ a homomorphism of   algebras such that the maps $\mcA\times {\mcB}\to\hat {\mcB}, (a,b)\mapsto \alpha(a)b$ and $ {\mcB}\times \mcA\to \hat {\mcB},\, (b,a)\mapsto b\alpha(a)$ are actually ${\mcB}$-valued and continuous (with respect to the topology on $\mcA$ and ${\mcB}$). Then the sum of topological vector space ${D}:=\mcA\oplus {\mcB}$, equipped with the multiplication 
$$(a,b)(a',b'):=(aa',\alpha(a)b'+b\alpha(a')+bb')$$
is a locally convex algebra $D_\alpha$\index{$D_\alpha$}. ${\mcB}$ identifies to an ideal in $D_\alpha$ that carries the subspace topology via the inclusion $\iota_{\mcB}:{\mcB}\to D_\alpha,\; b\mapsto (0,b)$, and $\mcA$ is a quotient of $D_\alpha$.

Further $D_\alpha$ fits into a split short exact sequence
\[\xymatrix{ 0\ar[r]&{\mcB}\ar[r]&D_\alpha\ar[r]&\mcA\ar[r]&0}\]
with split $\alpha':=\id_{\mcA}\oplus 0$.
\end{Lem}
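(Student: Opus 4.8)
The plan is to verify the three assertions in turn: that the stated multiplication makes $D:=\mcA\oplus\mcB$ a locally convex algebra $D_\alpha$; that $\iota_\mcB$ embeds $\mcB$ as a closed ideal with the subspace topology while $\mcA$ appears as the quotient; and that the resulting short exact sequence is split by $\alpha'=\id_\mcA\oplus 0$. First I would check that the bilinear map $((a,b),(a',b'))\mapsto(aa',\alpha(a)b'+b\alpha(a')+bb')$ is associative. Associativity in the first coordinate is just associativity in $\mcA$; in the second coordinate one expands both $((a,b)(a',b'))(a'',b'')$ and $(a,b)((a',b')(a'',b''))$ and uses associativity of the multiplication in $\hat\mcB$ together with the fact that $\alpha$ is multiplicative, so that the terms $\alpha(aa')b''$, $\alpha(a)\alpha(a')b''$ etc.\ match up --- a routine expansion I would not spell out in full. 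For continuity of the multiplication: given a continuous seminorm on $D$, which we may take of the form $p\oplus q$ with $p$ a continuous seminorm on $\mcA$ and $q$ one on $\mcB$, the first coordinate is handled by continuity of multiplication in $\mcA$, and the second by the hypothesis that $(a,b)\mapsto\alpha(a)b$ and $(b,a)\mapsto b\alpha(a)$ are continuous $\mcB$-valued maps, plus continuity of multiplication in $\mcB$; submultiplicativity of the seminorm estimates then follows by combining the three estimates, possibly after enlarging the seminorm $q$. Completeness of $D_\alpha$ as a locally convex space is immediate since $\mcA$ and $\mcB$ are complete and $D$ carries the product topology.

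Next I would verify the structural claims. The map $\iota_\mcB:b\mapsto(0,b)$ is linear, a homeomorphism onto its image $0\oplus\mcB$ (the product topology restricts to the topology of $\mcB$ on this factor), and it is multiplicative since $(0,b)(0,b')=(0,bb')$. To see $0\oplus\mcB$ is an ideal: $(a,b)(0,b')=(0,\alpha(a)b'+bb')\in 0\oplus\mcB$ by the hypothesis that $\alpha(a)b'\in\mcB$, and symmetrically on the other side. The quotient $D_\alpha/\iota_\mcB(\mcB)$ is $\mcA$ with the quotient topology, which coincides with the given topology on $\mcA$ because the projection $\mcA\oplus\mcB\to\mcA$ is continuous, open, and split by $a\mapsto(a,0)$ as topological vector spaces; and this projection is an algebra homomorphism since the first coordinate of the product is $aa'$. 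Finally, $\alpha'=\id_\mcA\oplus 0:\mcA\to D_\alpha$, $a\mapsto(a,0)$, is an algebra homomorphism because $(a,0)(a',0)=(aa',0)$, and it is a section of the quotient map $D_\alpha\to\mcA$; hence the sequence
\[\xymatrix{0\ar[r]&\mcB\ar[r]^{\iota_\mcB}&D_\alpha\ar[r]&\mcA\ar[r]&0}\]
is split exact in the category of locally convex algebras.

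The only point requiring any genuine care --- the ``main obstacle'', such as it is --- is the bookkeeping in the continuity/submultiplicativity estimate for the multiplication on $D_\alpha$: one must choose, for a given continuous seminorm, a dominating seminorm on $D_\alpha$ that simultaneously controls all three cross-terms $\alpha(a)b'$, $b\alpha(a')$, $bb'$ as well as $aa'$, which is possible because there are only finitely many terms and each is individually controlled by hypothesis. Everything else is a direct unwinding of the definitions.
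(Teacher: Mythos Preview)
Your proof is correct. The only notable difference from the paper's argument is the verification of associativity: you propose a direct expansion of both sides of $((a,b)(a',b'))(a'',b'')=(a,b)((a',b')(a'',b''))$, while the paper observes that the map $(a,b)\mapsto (a,\alpha(a)+b)$ identifies $D_\alpha$ with the subalgebra
\[
\{(a,x)\in \mcA\times\hat\mcB\mid x-\alpha(a)\in \mcB\}
\]
of the associative algebra $\mcA\times\hat\mcB$, so associativity comes for free. The paper's route is slicker and avoids the bookkeeping you flag; on the other hand your approach is entirely self-contained and does not require introducing an auxiliary identification. The remaining claims (continuity of the product, the ideal and quotient structure, and the split) are handled essentially the same way in both, though the paper is much terser and leaves them implicit.
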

\begin{proof} By hypothesis, the multiplication is continuous. Further, we may identify $D_\alpha$ with the subalgebra
$$\{(a,x)\in \mcA\times\hat\mcB\big| x-\alpha(a)\in B\}$$
of $\mcA\times\hat\mcB$, and hence the multiplication is associative.
\end{proof}
\begin{Rem} If ${\mcA},\hat {\mcB},{\mcB}$ and $\alpha$ come from an $\LC$-quasihomomorphism $(\alpha,\bar\alpha):{\mcA}\rrarrow \hat {\mcB}\trianglerighteq {\mcB}$, then  $\bar\alpha':=\id_{\mcA}\oplus (\bar\alpha-\alpha):{\mcA}\to D_\alpha$ defines another split of the above extension.\ifthenelse{\boolean{notes}}{This is an algebra-morphism as for all $a,a'\in {\mcA}$:
\begin{align*}&(a,\bar\alpha(a)-\alpha(a))(a',\bar\alpha(a')-\alpha(a'))\\ =&(aa',\alpha(a)\bar\alpha(a')-\alpha(aa')+\bar\alpha(a)\alpha(a')-\alpha(aa'))
						+(\bar\alpha(a)-\alpha(a))(\bar\alpha(a')-\alpha(a'))\\
					=&(aa',\bar\alpha(aa')-\alpha(aa')),
\end{align*}}{}
Further we see that it is actually possible to suppose that $\hat B$ is a locally convex algebra and $\alpha$, $\bar\alpha$ are morphisms of locally convex algebras. However, the elements we will use do not naturally have this form.
\end{Rem}

We will usually identify ${\mcB}$ with its image in $D$ and write $\alpha$ and $\bar\alpha$ instead of $\alpha'$, $\bar\alpha'$. 
\begin{Def}\label{quasiinduced} Let $(\alpha,\bar\alpha):{\mcA}\rrarrow \hat {\mcB}\trianglerighteq {\mcB}$ be an $\LC$-quasihomomorphism. Then for every split exact functor $H$ on the category of locally convex algebras the composition $H(\iota_{\mcB})^{-1}\circ (H(\alpha)-H(\bar\alpha'))$ defines a group homomorphism $H({\mcA})\to H({\mcB})$, denoted $H(\alpha,\bar\alpha)$ and called the homomorphism induced by the quasihomomorphism $(\alpha,\bar\alpha)$; if there is no risk of confusion, we write $H(\alpha,\bar\alpha)=(\alpha,\bar\alpha)_*$.
\end{Def}
\begin{Lem}\label{morphofdoublesplit} If $(\phi_1,\phi_2,\phi_3)$ is a morphism of double split extensions, i.e.:
\begin{equation}\label{homologycommutes}\xymatrix{ 0\ar[r]&{\mcB}\ar[d]_{\phi_1}\ar[r]^\iota &D\ar[d]_{\phi_2}\ar[r]|{\,\,\pi\,}&{\mcA}\ar@/_.3cm/[l]_{\alpha}\ar@/^.3cm/[l]^{\bar\alpha}\ar[d]^{\phi_3}\ar[r]&0\\
0\ar[r]&{\mcB}'\ar[r]^{\iota'}&D'\ar[r]|{\,\,\pi'\,}&{\mcA}'\ar@/_.3cm/[l]_{\beta}\ar@/^.3cm/[l]^{\bar\beta}\ar[r]&0
}\end{equation}
commutes in the usual sense and $\phi_2\circ\alpha=\beta\circ\phi_3$, $\phi_2\circ\bar\alpha=\bar\beta\circ\phi_3$, then for every split exact functor $H$
$$H(\phi_1)\circ H(\alpha,\bar\alpha)=H(\beta,\bar\beta)\circ H(\phi_3).$$
\end{Lem}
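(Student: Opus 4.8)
The statement is the naturality of the induced-homomorphism construction from Definition \ref{quasiinduced} with respect to morphisms of double split extensions. The plan is to unravel both sides using the definition $H(\alpha,\bar\alpha)=H(\iota)^{-1}\circ(H(\alpha)-H(\bar\alpha'))$, where $\bar\alpha':=\id_{\mcA}\oplus(\bar\alpha-\alpha)$, and then to play the commutativity of the diagram (\ref{homologycommutes}) against functoriality of $H$. First I would record the elementary fact that a morphism of split extensions carries $\bar\alpha'$ to $\bar\beta'$: from $\phi_2\circ\alpha=\beta\circ\phi_3$ and $\phi_2\circ\bar\alpha=\bar\beta\circ\phi_3$ together with $\phi_1\circ(\bar\alpha-\alpha)=(\bar\beta-\beta)\circ\phi_3$ (which follows since $\iota'\circ\phi_1=\phi_2\circ\iota$ and the difference maps land in the ideals), one gets $\phi_2\circ\bar\alpha'=\bar\beta'\circ\phi_3$ as maps $\mcA\to D'$. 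Combined with the given $\phi_2\circ\alpha=\beta\circ\phi_3$ this is the algebraic input.

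Next I would apply $H$ to all these identities. From $\phi_2\circ\alpha=\beta\circ\phi_3$ and $\phi_2\circ\bar\alpha'=\bar\beta'\circ\phi_3$ we get
\[
H(\phi_2)\circ\bigl(H(\alpha)-H(\bar\alpha')\bigr)=\bigl(H(\beta)-H(\bar\beta')\bigr)\circ H(\phi_3).
\]
Also, applying $H$ to $\iota'\circ\phi_1=\phi_2\circ\iota$ gives $H(\iota')\circ H(\phi_1)=H(\phi_2)\circ H(\iota)$. The remaining step is to combine these two. Since $H$ is split exact, $H(\iota)$ and $H(\iota')$ are injective with the indicated images, so $H(\iota)^{-1}$ and $H(\iota')^{-1}$ make sense on those images; one checks that the relevant elements $H(\alpha)-H(\bar\alpha')$ land in $\Image H(\iota)$ (indeed $\alpha-\bar\alpha'$, hence also its difference at the level of $D$, is the map $a\mapsto(0,\alpha(a)-\bar\alpha(a))$, which factors through $\iota_{\mcB}$ — this is exactly what makes Definition \ref{quasiinduced} meaningful), and likewise $H(\beta)-H(\bar\beta')$ lands in $\Image H(\iota')$. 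Then precomposing the displayed identity with $H(\iota)^{-1}$ is illegitimate directly; instead I would rewrite: there is a unique $g\colon H(\mcA)\to H(\mcB)$ with $H(\iota)\circ g=H(\alpha)-H(\bar\alpha')$, namely $g=H(\alpha,\bar\alpha)$, and applying $H(\phi_2)$ and using $H(\phi_2)\circ H(\iota)=H(\iota')\circ H(\phi_1)$ yields $H(\iota')\circ H(\phi_1)\circ g=H(\phi_2)\circ(H(\alpha)-H(\bar\alpha'))=(H(\beta)-H(\bar\beta'))\circ H(\phi_3)=H(\iota')\circ H(\beta,\bar\beta)\circ H(\phi_3)$. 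Injectivity of $H(\iota')$ then gives $H(\phi_1)\circ H(\alpha,\bar\alpha)=H(\beta,\bar\beta)\circ H(\phi_3)$, which is the claim.

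The only genuine subtlety — and the step I expect to require the most care — is the bookkeeping with $\bar\alpha'$ versus $\bar\alpha$ and the verification that $\phi_2\circ\bar\alpha'=\bar\beta'\circ\phi_3$ exactly; one must be careful that $\bar\alpha'$ is defined as a map into $D$ using the identification of $\mcB$ with $\iota_{\mcB}(\mcB)\subseteq D$, so that checking the identity means checking it componentwise in $\mcA\oplus\mcB$, where the $\mcA$-component is trivial and the $\mcB$-component is precisely $\phi_1\circ(\bar\alpha-\alpha)=(\bar\beta-\beta)\circ\phi_3$. Everything else is formal manipulation with functoriality and the injectivity of $H$ on the ideal, guaranteed by split exactness. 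No stability or diffotopy hypotheses are needed here — split exactness of $H$ alone suffices.
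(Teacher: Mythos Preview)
Your proof is correct and follows exactly the same route as the paper's: compose both sides with $H(\iota')$, use $H(\iota')\circ H(\phi_1)=H(\phi_2)\circ H(\iota)$ together with the hypotheses $\phi_2\circ\alpha=\beta\circ\phi_3$ and $\phi_2\circ\bar\alpha=\bar\beta\circ\phi_3$, and then cancel $H(\iota')$ by injectivity. The paper's proof is the same three-line computation you describe, only less verbose. Your extra care about $\bar\alpha'$ versus $\bar\alpha$ is slightly misplaced here: in the lemma the maps $\alpha,\bar\alpha$ are already the actual sections $\mcA\to D$, so the hypotheses give you $H(\phi_2)\circ H(\bar\alpha)=H(\bar\beta)\circ H(\phi_3)$ directly, and there is no separate componentwise verification to perform --- but this does no harm.
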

\begin{proof} As $H(\iota')$ is injective and
\begin{align*} H(\iota')\circ H(\phi_1)\circ H(\alpha,\bar\alpha)=&H(\phi_2)\circ (H(\alpha)-H(\bar\alpha))\\
=&(H(\beta)-H(\bar\beta))\circ H(\phi_3)\\
=&H(\iota')\circ H(\beta,\bar\beta)\circ H(\phi_3)
\end{align*}
the result follows.
\end{proof}
\begin{Rem}\label{remarkforinverse}
It suffices to suppose that we are given a commutative diagram such as the one in \ref{homologycommutes} that commutes after applying $H$.
\end{Rem}
\begin{Rem}\label{differentdalpha} In the setting of $C^*$-algebras, one usually applies a slightly different construction (e.g. \cite{MR2207702}). If we fix a $C^*$-quasihomomorphism $(\alpha,\bar\alpha):{\mcA}\rrarrow \mathcal{M}({\mcB})\trianglerighteq {\mcB}$ and denote by $\bar D_\alpha$  the subalgebra of ${\mcA}\oplus_{\cat{C}^*} \mathcal{M}({\mcB})$ generated by ${\mcB}$ and $(a,\alpha(a))$, then there is a unique morphism $\theta$ yielding a commutative diagram
\[\xymatrix{ 0\ar[r]&{\mcB}\ar@{=}[d]\ar[r]&D_\alpha\ar[r]\ar@{..>}^\theta[d]&{\mcA}\ar@/_.4cm/_{\id_{\mcA}\oplus 0}[l]\ar[r]\ar@{=}[d]&0\\
0\ar[r]&{\mcB}\ar[r]&\bar D_\alpha\ar[r]&{\mcA}\ar@/^.4cm/^{\id_{\mcA}\oplus \alpha}[l]\ar[r]&0}\]
That is, because the lefthand side commutes,  $\theta$ has to fix ${\mcB}$, and as the right hand side commutes, $(a,0)\mapsto (a,\alpha(a))$. It is easily calculated that the morphism thus determined actually is a $*$-homomorphism (if we equip $D_\alpha$ with the obvious involution).\ifthenelse{\boolean{notes}}{ In fact, we have:
\begin{align*}&\theta((a,b)\cdot_{D_\alpha}(a',b'))=\theta((aa',\alpha(a)b'+b\alpha(a')+bb'))\\
=&(aa',\alpha(aa')+\alpha(a)b'+b\alpha(a')+bb')=(a,\alpha(a)+b)\cdot_{\bar D_\alpha}(a',\alpha(a')+b').\\
\end{align*}}{} Therefore the construction coincides with the one in the $\cat{C}^*$-setting.
 
\end{Rem}

\begin{Prop}\label{quasirules} Let $(\alpha,\bar\alpha):{\mcA}\rrarrow \hat {\mcB}\trianglerighteq {\mcB}$ be an $\LC$-quasihomomorphism, $H$ a split exact functor  on the category of locally convex algebras with values in abelian groups. Then the following properties hold:
\begin{enumerate}
\item\label{quasiprecompose} $H((\alpha,\bar\alpha)\circ\phi)=H(\alpha,\bar\alpha)\circ H(\phi)$ for every morphism $\phi:{\mcC}\to {\mcA}$,
\item\label{quasipostcompose} $H(\psi\circ(\alpha,\bar\alpha))=H(\psi)\circ H(\alpha,\bar\alpha)$ for every homomorphism $\psi:\hat {\mcB}\to \hat {\mcC}$ of  algebras, such that there is a locally convex subalgebra  ${\mcC}\subseteq\hat {\mcC}$ such that $(\psi\circ\alpha,\psi\circ\bar\alpha):{\mcA}\rrarrow \hat {\mcC}\trianglerighteq {\mcC}$ is a quasihomomorphism,
\note{\item\label{onemoreprop} if $(\beta,\bar\beta):\mcA'\rrarrow \hat{\mcB'}\trianglerighteq \mcB'$ is a quasihomomorphism, and $\phi:\mcA\to\mcA'$, $\psi:\mcB\to \mcB'$ are homomorphisms of locally convex algebras such that $\psi\circ (\alpha-\bar\alpha)=(\beta-\bar\beta)\circ \phi$, then
$$ H(\psi)\circ H(\alpha,\bar\alpha)=H(\beta,\bar\beta)\circ H(\phi),$$}{}
\item\label{negativequasi} $H(\alpha,\bar\alpha)=-H(\bar\alpha,\alpha)$,
\item\label{orthosum} if $\alpha-\bar\alpha$ is a homomorphism orthogonal to $\bar\alpha$, then $H(\alpha,\bar\alpha)=H(\alpha-\bar\alpha)$.
\end{enumerate}
\end{Prop}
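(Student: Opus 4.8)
The plan is to deduce all four assertions from Lemma~\ref{morphofdoublesplit}, by producing for each one a morphism of double split exact sequences among the algebras $D_{\alpha}$ of Lemma~\ref{newalgebra}; the last one additionally invokes additivity on orthogonal homomorphisms, Proposition~\ref{functorgeneralities}, part~\ref{sum}. Throughout I would write $\alpha'=\id_{\mcA}\oplus 0$ and $\bar\alpha'=\id_{\mcA}\oplus(\bar\alpha-\alpha)$ for the two splits of $0\to\mcB\to D_{\alpha}\to\mcA\to 0$, so that $H(\alpha,\bar\alpha)=H(\iota_{\mcB})^{-1}\circ(H(\alpha')-H(\bar\alpha'))$ by Definition~\ref{quasiinduced}, and I would make repeated use of the realisation $D_{\alpha}=\{(a,x)\in\mcA\times\hat{\mcB}\mid x-\alpha(a)\in\mcB\}$ from the proof of Lemma~\ref{newalgebra}, under which $\iota_{\mcB}$ is $b\mapsto(0,b)$, the canonical split is $a\mapsto(a,\alpha(a))$ and the second split is $a\mapsto(a,\bar\alpha(a))$.

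For \ref{quasiprecompose} I would check that $(c,b)\mapsto(\phi(c),b)$ is a morphism of locally convex algebras $D_{\alpha\circ\phi}\to D_{\alpha}$ --- this uses only that $\phi$ is multiplicative --- which is the identity on the ideal $\mcB$, induces $\phi$ on the quotients, and carries the two splits of $D_{\alpha\circ\phi}$ to $\alpha'\circ\phi$ and $\bar\alpha'\circ\phi$. Lemma~\ref{morphofdoublesplit} then gives $H((\alpha,\bar\alpha)\circ\phi)=H(\alpha,\bar\alpha)\circ H(\phi)$. For \ref{quasipostcompose} I would argue symmetrically with $(a,b)\mapsto(a,\psi(b))\colon D_{\alpha}\to D_{\psi\circ\alpha}$, which is well defined and multiplicative because $\psi$ is a homomorphism and $\psi(\mcB)\subseteq\mcC$ (this inclusion is implicit in the statement, being what makes the right-hand side meaningful); here the map on ideals is $\psi|_{\mcB}$ and the map on quotients is $\id_{\mcA}$, and Lemma~\ref{morphofdoublesplit} again gives the claim.

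For \ref{negativequasi}, note first that $(\bar\alpha,\alpha)$ is again a quasihomomorphism relative to $\mcB$ (since $\mcB$ is an algebra, $\bar\alpha(a)b=\alpha(a)b-(\alpha(a)-\bar\alpha(a))b\in\mcB$, and similarly on the other side), so $D_{\bar\alpha}$ and $H(\bar\alpha,\alpha)$ are defined. The point is that in the realisation above $D_{\alpha}$ and $D_{\bar\alpha}$ are \emph{literally the same} subalgebra $\{(a,x)\in\mcA\times\hat{\mcB}\mid x-\alpha(a)\in\mcB\}$ of $\mcA\times\hat{\mcB}$ (the defining conditions agree because $\alpha-\bar\alpha$ is $\mcB$-valued), with the same ideal $\{0\}\times\mcB$; and the two maps $a\mapsto(a,\alpha(a))$, $a\mapsto(a,\bar\alpha(a))$ are the canonical and second splits of $D_{\alpha}$ but the second and canonical splits of $D_{\bar\alpha}$, i.e.\ they occur in the opposite order. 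Since $H(\iota_{\mcB})^{-1}\circ(H(s_{0})-H(s_{1}))$ depends antisymmetrically on the ordered pair of splits $(s_{0},s_{1})$, \ref{negativequasi} follows at once; equivalently, this is Lemma~\ref{morphofdoublesplit} and Remark~\ref{remarkforinverse} applied to the identity of this common algebra with the two splits exchanged.

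For \ref{orthosum} I would set $\delta:=\alpha-\bar\alpha\colon\mcA\to\mcB$, a homomorphism orthogonal to $\bar\alpha$ by hypothesis, and observe that inside $D_{\alpha}$ one has $\alpha'=\bar\alpha'+\iota_{\mcB}\circ\delta$, with $\iota_{\mcB}\circ\delta\colon\mcA\to D_{\alpha}$ a homomorphism. A short computation in $D_{\alpha}$, writing $\alpha=\bar\alpha+\delta$ and using the multiplicativity of $\delta$ together with the orthogonality relations $\bar\alpha(a)\delta(a')=0=\delta(a')\bar\alpha(a)$, then shows that $\bar\alpha'$ and $\iota_{\mcB}\circ\delta$ are orthogonal as homomorphisms into $D_{\alpha}$. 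Hence Proposition~\ref{functorgeneralities}, part~\ref{sum}, gives $H(\alpha')=H(\bar\alpha')+H(\iota_{\mcB})\circ H(\delta)$, and therefore $H(\alpha,\bar\alpha)=H(\iota_{\mcB})^{-1}\circ(H(\alpha')-H(\bar\alpha'))=H(\delta)=H(\alpha-\bar\alpha)$. I do not anticipate a serious obstacle: the only real computations are the orthogonality verification for \ref{orthosum} and the identification of the splits in \ref{negativequasi}, the rest being the formal compatibility of the assignment $\alpha\mapsto D_{\alpha}$ with composition and with the morphisms involved; the one point that needs to be stated carefully is the implicit hypothesis $\psi(\mcB)\subseteq\mcC$ in \ref{quasipostcompose}.
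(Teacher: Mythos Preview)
Your proof is correct and follows the same strategy as the paper: build the appropriate morphism of double split sequences for each part and invoke Lemma~\ref{morphofdoublesplit} (and Proposition~\ref{functorgeneralities}\ref{sum} for \ref{orthosum}). For \ref{negativequasi}, your observation that $D_\alpha$ and $D_{\bar\alpha}$ coincide as subalgebras of $\mcA\times\hat\mcB$ with the two splits interchanged is a clean rephrasing of the paper's explicit isomorphism $\phi_2\colon D_\alpha\to D_{\bar\alpha}$, $(a,b)\mapsto(a,\alpha(a)-\bar\alpha(a)+b)$; these are the same map once one unwinds the two identifications with the common subalgebra.
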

\begin{proof} For \ref{quasiprecompose}, just note that $\phi\oplus\id_{\mcB}:{\mcC}\oplus {\mcB}\to {\mcA}\oplus {\mcB}$ is actually a morphism of locally convex algebras $D_{\alpha\circ\phi}\to D_\alpha$ such that $(\id_{\mcB},\phi\oplus\id_{\mcB},\phi)$ is a morphism of the double split exact sequences associated to $D_{\alpha\circ\phi}$ and $D_\alpha$, and apply Lemma \ref{morphofdoublesplit}.

Similar reasoning applies to the morphism $(\psi,\psi\oplus \id_{\mcA},\id_{\mcA})$ and proves \ref{quasipostcompose}. 

\note{\ref{onemoreprop} follows by observing that (eventually replacing by $D_\alpha$ and $D_\beta$) we may assume $\psi$ extends to a homomorphism $\hat \mcB\to \hat{\mcB'}$ and that $\psi\circ\alpha=\beta\circ\phi$, $\psi\circ\bar\alpha=\bar\beta\circ\phi$. Thus we get a commutative diagram:
\[\xymatrix{ 0\ar[r]& {\mcB}\ar[d]_{\phi}\ar[r] &D_{\alpha}\ar[d]_{\psi\oplus\phi}\ar[r]&{\mcA}\ar@/_.3cm/[l]_{\alpha'}\ar@/^.3cm/[l]^{\bar\alpha'}\ar[d]^{\psi}\ar[r]&0\\
0\ar[r]&{\mcB'}\ar[r]&D_{\beta}\ar[r]&{\mcA'} \ar@/_.3cm/[l]_{\beta'}\ar@/^.3cm/[l]^{\bar\beta'}\ar[r]&0.
}\]}{}

For \ref{negativequasi}, use that with $\phi_2:D_\alpha\to D_{\bar\alpha},(a,b)\mapsto (a,\alpha(a)-\bar\alpha(a)+b)$ we have a morphism $(\id_{{\mcB}_*},{\phi_2}_*,-\id_{{\mcA}_*})$ \underline{after} applying $H$ and use Remark \ref{remarkforinverse}.

\ref{orthosum} follows from Proposition \ref{functorgeneralities} - \ref{sum}.\note{ Namely: 
\begin{align*} &H(\alpha,\bar\alpha)=H(\iota)^{-}\circ(H(\alpha)-H(\bar\alpha))\\
=&H(\iota)^-\circ (H(\alpha-\bar\alpha)+H(\bar\alpha)-H(\bar\alpha))=H(\iota)^-\circ H(\alpha-\bar\alpha).
\end{align*}}{}
\end{proof}
\begin{Prop}
Let

\[\xymatrix{&&0\ar[dd]&&0\ar[dd]&&0\ar[dd]\\ \\
0\ar[rr]&&I^1_{\;\;1}\ar[dd]_{\phi^{12}_{\;\;\;1}}\ar[rr]^{\phi^1_{\;\;12}}&&I^1_{\;\;2}\ar[dd]_{\phi^{12}_{\;\;\;2}}\ar[rr]|{\;\phi^1_{\;\;23}}&&I^1_{\;\;3}\ar@/^.4cm/[ll]^{\sigma^{1}_{\;\;23}}\ar@/_.4cm/[ll]_{\bar\sigma^{1}_{\;\;23}}\ar[dd]_{\phi^{12}_{\;\;\;3}}\ar[rr]&&0\\ \\
0\ar[rr]&&I^2_{\;\;1}\ar[dd]|{\;\phi^{23}_{\;\;\;1}\,}\ar[rr]^{\phi^2_{\;\;12}}&&I^2_{\;\;2}\ar[dd]|{\;\phi^{23}_{\;\;\;2}\,}\ar[rr]|{\;\phi^2_{\;\;23}\,}&&I^2_{\;\;3}\ar@/^.4cm/[ll]^{\bar\sigma^{2}_{\;\;23}}\ar@/_.4cm/[ll]_{\sigma^{2}_{\;\;23}}\ar[dd]|{\phi^{23}_{\;\;\;3}}\ar[rr]&&0\\ \\
0\ar[rr]&&I^3_{\;\;1}\ar[dd]\ar[rr]^{\phi^3_{\;\;12}}\ar@/^.4cm/[uu]^{\sigma^{23}_{\;\;\;1}}\ar@/_.4cm/[uu]_{\bar\sigma^{23}_{\;\;\;1}}&&I^3_{\;\;2}\ar[dd]\ar[rr]|{\;\phi^3_{\;\;23}\,}\ar@/^.4cm/[uu]^{\sigma^{23}_{\;\;\;2}}\ar@/_.4cm/[uu]_{\bar\sigma^{23}_{\;\;\;2}}&&I^3_{\;\;3}\ar[dd]\ar[rr]\ar@/^.4cm/[uu]^{\bar\sigma^{23}_{\;\;\;3}}\ar@/_.4cm/[uu]_{\sigma^{23}_{\;\;\;3}}\ar@/^.4cm/[ll]^{\sigma^{3}_{\;\;23}}\ar@/_.4cm/[ll]_{\bar\sigma^{3}_{\;\;23}}&&0\\ \\
&&0&&0&&0}\]
be a diagram that is row- and column-wise given by double-split short exact sequences of locally convex algebras and morphisms. We suppose that all squares involving only morphisms of type $\phi$ commute, and that $\phi^2_{\;\;12}\circ\sigma^{23}_{\;\;\;1}=\sigma^{23}_{\;\;\;2}\circ\phi^3_{\;\;12}$,  $\sigma^2_{\;\;23}\circ\phi^1_{\;\;23}=\phi^{12}_{\;\;\;2}\circ\sigma^1_{\;\;23}$, $\sigma^{23}_{\;\;\;2}\circ \sigma^3_{\;\;23}=\sigma^2_{\;\;23}\circ\sigma^{23}_{\;\;\;3}$ and the same relations with all $\sigma$ replaced by $\bar\sigma$; finally $\bar\sigma^{23}_{\;\;\;2}\circ \sigma^3_{\;\;23}=\sigma^2_{\;\;23}\circ\bar\sigma^{23}_{\;\;\;3}$ and $\sigma^{23}_{\;\;\;2}\circ
\bar\sigma^3_{\;\;23}=\bar\sigma^2_{\;\;23}\circ\sigma^{23}_{\;\;\;3}$. Then for every split exact functor $H$ we have
$$H(\sigma^{23}_{\;\;\;1},\bar\sigma^{23}_{\;\;\;1})\circ H(\sigma^3_{\;\;23},\bar\sigma^3_{\;\;23})=H(\sigma^1_{\;\;23},\bar\sigma^1_{\;\,23})\circ H(\sigma^{23}_{\;\;\;3},\bar\sigma^{23}_{\;\;\;3}),$$
in other words, the composition of the morphisms induced by the outer edges in the diagram are the same.
\end{Prop}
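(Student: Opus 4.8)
The strategy is to reduce the $3\times 3$ diagram to a $2\times 2$ situation by applying Lemma \ref{morphofdoublesplit} twice, following the idea that each quasihomomorphism induces a morphism of double-split extensions and that these morphisms compose as expected. Concretely, the composition $H(\sigma^{23}_{\;\;\;1},\bar\sigma^{23}_{\;\;\;1})\circ H(\sigma^3_{\;\;23},\bar\sigma^3_{\;\;23})$ should be compared to $H(\sigma^1_{\;\;23},\bar\sigma^1_{\;\,23})\circ H(\sigma^{23}_{\;\;\;3},\bar\sigma^{23}_{\;\;\;3})$ by showing that both equal a ``diagonal'' induced map $H(A)\to H(B)$, where $A:=I^3_{\;\;3}$ and $B:=I^1_{\;\;1}$; the equality will come down to the fact that the two ways of traversing the diagram agree on the nose (not just up to homotopy) thanks to the compatibility relations among the $\sigma$'s assumed in the statement. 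First I would invoke the middle row and middle column: the pair $(\sigma^3_{\;\;23},\bar\sigma^3_{\;\;23}):I^3_{\;\;3}\rrarrow$ into the relevant $D_\alpha$-type algebra, together with the vertical maps $\phi^{23}_{\;\;\;j}$, forms a morphism of double-split extensions into the middle row, so Lemma \ref{morphofdoublesplit} gives $H(\phi^{23}_{\;\;\;1})\circ H(\sigma^3_{\;\;23},\bar\sigma^3_{\;\;23})=H(\sigma^2_{\;\;23},\bar\sigma^2_{\;\;23})\circ H(\phi^{23}_{\;\;\;3})$, and symmetrically using the columns $H(\phi^1_{\;\;12})\circ H(\sigma^{23}_{\;\;\;3},\bar\sigma^{23}_{\;\;\;3})=H(\sigma^{23}_{\;\;\;2},\bar\sigma^{23}_{\;\;\;2})\circ H(\phi^3_{\;\;12})$.

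The heart of the argument is then the ``middle'' identity $H(\sigma^{23}_{\;\;\;1},\bar\sigma^{23}_{\;\;\;1})\circ H(\sigma^2_{\;\;23},\bar\sigma^2_{\;\;23}) = H(\sigma^1_{\;\;23},\bar\sigma^1_{\;\,23})\circ H(\sigma^{23}_{\;\;\;2},\bar\sigma^{23}_{\;\;\;2})$, which is where the cross-compatibility relations $\sigma^{23}_{\;\;\;2}\circ\sigma^3_{\;\;23}=\sigma^2_{\;\;23}\circ\sigma^{23}_{\;\;\;3}$ and its $\bar\sigma$- and mixed-$\bar\sigma$-variants are used. The way to see this is to build, using Lemma \ref{newalgebra}, the $D_\alpha$-algebra $D$ for the ``composite'' quasihomomorphism obtained by splicing $(\sigma^{23}_{\;\;\;1},\bar\sigma^{23}_{\;\;\;1})$ with $(\sigma^2_{\;\;23},\bar\sigma^2_{\;\;23})$, and likewise for the other order, and then exhibit an isomorphism (or at least a morphism inducing an isomorphism, in the sense of Remark \ref{remarkforinverse}) between these two spliced algebras that is compatible with both pairs of splittings. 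The compatibility hypotheses are exactly what makes the two spliced maps $I^3_{\;\;3}\to$ (the $\sco$-stabilised target, after composing the two $D_\alpha$-constructions) literally equal as set maps, so that Remark \ref{remarkforinverse} applies; I expect the diffotopy-invariance and $\sco$-stability hidden in the quasihomomorphism formalism (used in Definition \ref{quasiinduced} via Proposition \ref{functorgeneralities}) to take care of the bookkeeping of the extra matrix coordinates, exactly as in the proof of Theorem \ref{contweakeriso}.

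Finally I would assemble the pieces: $H(\sigma^{23}_{\;\;\;1},\bar\sigma^{23}_{\;\;\;1})\circ H(\sigma^3_{\;\;23},\bar\sigma^3_{\;\;23}) = H(\sigma^{23}_{\;\;\;1},\bar\sigma^{23}_{\;\;\;1})\circ (H(\phi^{23}_{\;\;\;1}))^{-1}\circ\cdots$ — but more cleanly, since the outer edges of the diagram literally form a commutative square of double-split extensions once one records that $\phi^1_{\;\;12}$ and $\phi^{23}_{\;\;\;1}$ are inclusions of ideals and the relevant squares commute, I would set up a single morphism of double-split extensions whose source extension has quotient $I^3_{\;\;3}$ and kernel $I^1_{\;\;1}$, realised in two ways corresponding to the two paths, and apply Lemma \ref{morphofdoublesplit} (and Remark \ref{remarkforinverse}) once more. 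The main obstacle I anticipate is not any single step but the organisation of the large diagram: making precise which $D_\alpha$-algebra plays the role of ``$D$'' at each corner, checking that the stated (somewhat asymmetric) list of $\sigma$/$\bar\sigma$ compatibilities is exactly what is needed — and no more — to get the two composite maps to agree before applying $H$, and keeping track of where genuine diffotopies (rather than strict equalities) are unavoidable. I expect that, as in Theorem \ref{contweakeriso}, a single well-chosen diffotopy interpolating between the spliced map and a stabilisation will close the last gap.
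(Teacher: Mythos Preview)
Your plan has two genuine gaps.

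First, you rely on diffotopy invariance and $\sco$-stability (``I expect the diffotopy-invariance and $\sco$-stability hidden in the quasihomomorphism formalism \ldots'' and the appeal to Theorem~\ref{contweakeriso}). These are \emph{not} hypotheses of the proposition: only split exactness of $H$ is assumed. Any argument that needs a diffotopy or a matrix-stabilisation to close the last gap proves a weaker statement than the one asked for.

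Second, several of your intermediate identities do not type-check or are not justified by the stated hypotheses. The identity $H(\phi^{23}_{\;\;\;1})\circ H(\sigma^3_{\;\;23},\bar\sigma^3_{\;\;23})=H(\sigma^2_{\;\;23},\bar\sigma^2_{\;\;23})\circ H(\phi^{23}_{\;\;\;3})$ would require $\phi^{23}_{\;\;\;2}\circ\sigma^2_{\;\;23}=\sigma^3_{\;\;23}\circ\phi^{23}_{\;\;\;3}$ (and its barred variant), which is not among the hypotheses. Your ``middle identity'' $H(\sigma^{23}_{\;\;\;1},\bar\sigma^{23}_{\;\;\;1})\circ H(\sigma^2_{\;\;23},\bar\sigma^2_{\;\;23}) = \cdots$ does not even compose: the target of $H(\sigma^2_{\;\;23},\bar\sigma^2_{\;\;23})$ is $H(I^2_{\;\;1})$, while the source of $H(\sigma^{23}_{\;\;\;1},\bar\sigma^{23}_{\;\;\;1})$ is $H(I^3_{\;\;1})$.

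The paper's proof avoids all of this by a single observation: both $H(\phi^2_{\;\;12})$ and $H(\phi^{12}_{\;\;\;1})$ are injective (being the ideal-inclusions in split exact sequences), and $H(\phi^2_{\;\;12})\circ H(\phi^{12}_{\;\;\;1})=H(\phi^{12}_{\;\;\;2})\circ H(\phi^1_{\;\;12})$. So postcompose both sides of the asserted equality with this injective map into $H(I^2_{\;\;2})$. Since $\phi^{12}_{\;\;\;1}$ is exactly the $\iota$ appearing in Definition~\ref{quasiinduced} for the column-$1$ quasihomomorphism, $H(\phi^{12}_{\;\;\;1})\circ H(\sigma^{23}_{\;\;\;1},\bar\sigma^{23}_{\;\;\;1})=H(\sigma^{23}_{\;\;\;1})-H(\bar\sigma^{23}_{\;\;\;1})$; the remaining $H(\phi^2_{\;\;12})$ then slides past using the hypothesis $\phi^2_{\;\;12}\circ\sigma^{23}_{\;\;\;1}=\sigma^{23}_{\;\;\;2}\circ\phi^3_{\;\;12}$ (and its barred version), and $\phi^3_{\;\;12}$ is in turn the $\iota$ for row~$3$. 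After the symmetric manipulation on the other side, both compositions become
\[
\bigl(H(\sigma^{23}_{\;\;\;2})-H(\bar\sigma^{23}_{\;\;\;2})\bigr)\circ\bigl(H(\sigma^3_{\;\;23})-H(\bar\sigma^3_{\;\;23})\bigr)
\quad\text{versus}\quad
\bigl(H(\sigma^2_{\;\;23})-H(\bar\sigma^2_{\;\;23})\bigr)\circ\bigl(H(\sigma^{23}_{\;\;\;3})-H(\bar\sigma^{23}_{\;\;\;3})\bigr),
\]
and the four cross-compatibility relations at the $(2,2)$-corner match the four expanded terms one by one. No $D_\alpha$-constructions, no diffotopies, no stabilisation --- just the definition of $H(\sigma,\bar\sigma)$ and injectivity of $H(\iota)$.
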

\begin{proof} As 
$$H(\phi^2_{\;\;12})\circ H(\phi^{12}_{\;\;\;1})=H(\phi^{12}_{\;\;\;2})\circ H(\phi^1_{\;\;12})$$ 
and both sides are injective, it suffices to show
\begin{align*}& H(\phi^2_{\;\;12})\circ H(\phi^{12}_{\;\;\;1})\circ H(\sigma^{23}_{\;\;\;1},\bar\sigma^{23}_{\;\;\;1})\circ H(\sigma^3_{\;\;23},\bar\sigma^3_{\;\;23})\\
=&H(\phi^{12}_{\;\;\;2})\circ H(\phi^1_{\;\;12})\circ H(\sigma^1_{\;\;23},\bar\sigma^1_{\;\,23})\circ H(\sigma^{23}_{\;\;\;3},\bar\sigma^{23}_{\;\;\;3}).
\end{align*}
This follows from the hypotheses by an easy diagram chase.
\end{proof}

\begin{Def}\label{outerprods} We will call a quasihomomorphism  $(\alpha,\bar\alpha):{\mcA}\rrarrow \hat {\mcB}\trianglerighteq {\mcB}$ a quasihomomorphism in standard form if $\hat\mcB$ is isomorphic to $D_\alpha$.

Given a quasihomomorphism in standard form as above and a supplementary locally convex algebra ${\mcC}$, we define the quasihomomorphisms $(\alpha,\bar\alpha)\otimes {\mcC}:=(\alpha\otimes {\mcC},\bar\alpha\otimes {\mcC})$, and similarly ${\mcC}\otimes (\alpha,\bar\alpha)$.

If $\mcA$ and $\mcB$ are locally convex algebras, we define
$$\sigma_{\mcA,\mcB}:\mcA\potimes\mcB\to\mcB\potimes\mcA,\; a\otimes b\mapsto b\otimes a.$$
\end{Def}
Note that as the projective tensor product preserves split exactness, tensoring by ${\mcC}$ yields again a quasihomomorphism, which is furthermore again in standard form.

If we start with an arbitrary quasihomomorphism $(\alpha,\bar\alpha):{\mcA}\rrarrow\hat {\mcB}\trianglerighteq {\mcB}$, then we may  replace $\hat {\mcB}$ by $D_\alpha$ to obtain a quasihomomorphism in standard form.

Now let $(\alpha_i,\bar\alpha_i):{\mcA}_i\rrarrow \hat {\mcB}_i\trianglerighteq {\mcB}_i$ be two $\LC$-quasihomomorphisms. Then we have associated exact sequences $E_i:=\big(0\to {\mcB}_i\to D_i\to {\mcA}_i\to 0\big)$ as in Lemma \ref{newalgebra} and double splits $\alpha_i'$, $\bar\alpha_i'$. As the projective tensor product preserves exactness of linearly split extensions, we obtain double split exact sequences containing $E_1\otimes_\pi {\mcB}_2$, $E_1\otimes_\pi D_2$, $E_1\otimes_\pi {\mcA}_2$, and similarly ${\mcB}_1\otimes_\pi E_2$, $D_1\otimes_\pi E_2$ and ${\mcA}_1\otimes_\pi E_2$ that fit into a diagram as in the above Lemma, and obviously satisfy the commutativity conditions. Identifying $D_1\otimes\hat {\mcC}$ with the algebra associated to the quasihomomorphism $(\alpha_1,\bar\alpha_1)\otimes {\mcC}:{\mcA}_1\otimes_\pi {\mcC}\rrarrow{\mcB}_1\otimes_\pi {\mcC}\trianglerighteq {\mcB}_1\otimes_\pi {\mcC}$ for any locally convex algebra ${\mcC}$, and similarly for the others, we have proved most of the following
\begin{Prop}\label{doublesplitouterproduct}
$$H({\mcB}_1\otimes(\alpha_2,\bar\alpha_2))\circ H((\alpha_1,\bar\alpha_1)\otimes {\mcA}_2)=H((\alpha_1,\bar\alpha_1)\otimes {\mcB}_2)\circ H({\mcA}_1\otimes (\alpha_2,\bar\alpha_2)),$$
or, in other words, the obvious outer product of quasihomomorphisms  is compatible with the induction process. In particular
$$H({\mcB}_1\otimes \phi)\circ H((\alpha_1,\bar\alpha_1)\otimes {\mcA}_2)=H((\alpha_1,\bar\alpha_1)\otimes {\mcB}_2)\circ H({\mcA}_1\otimes \phi),$$
for every homomorphism $\phi: {\mcA}_2\to {\mcB}_2$.

If $(\alpha,\bar\alpha):{\mcA}\rrarrow \hat {\mcB}\trianglerighteq {\mcB}$ is a quasihomomorphism in standard form and ${\mcC}$ a supplementary locally convex algebra, then
$$H((\alpha,\bar\alpha)\otimes {\mcC})=H(\sigma_{{\mcC},{\mcB}})\circ H({\mcC}\otimes(\alpha,\bar\alpha))\circ H(\sigma_{{\mcA},{\mcC}}).$$
\end{Prop}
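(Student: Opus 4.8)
The first identity will be obtained as an instance of the Proposition on $3\times 3$ diagrams of double-split extensions proved above. After replacing $\hat{\mcB}_i$ by $D_{\alpha_i}$ — which affects none of the quantities in the statement — we may assume both quasihomomorphisms are in standard form, and we take $E_i\colon 0\to\mcB_i\to D_{\alpha_i}\to\mcA_i\to 0$ to be the double-split extensions of Lemma~\ref{newalgebra}, with splits $\alpha_i',\bar\alpha_i'$. Each $E_i$ is split as a sequence of locally convex spaces, so, since $\potimes$ preserves exactness of linearly split extensions, tensoring $E_1$ with each of the three terms of $E_2$ produces three double-split short exact rows, tensoring each term of $E_1$ with $E_2$ produces three double-split short exact columns, and, together with the maps obtained by tensoring the structure maps and splits of one $E_i$ with a fixed term of the other, these assemble into a diagram of exactly the shape treated there. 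Every square of $\phi$-type maps commutes, and each of the mixed $\phi$--$\sigma$ and $\sigma$--$\sigma$ relations required there is an instance of the single fact that an operation carried out on one tensor factor commutes with one carried out on the other (both composites being equal to the corresponding map $(\text{structure map of }E_1)\otimes(\text{structure map of }E_2)$). Finally, $D_{\alpha_1}\potimes\mcC$, with the evident ideal, quotient and two splits, is canonically isomorphic to the algebra that Lemma~\ref{newalgebra} attaches to $(\alpha_1,\bar\alpha_1)\otimes\mcC$ — one need only compare the defining multiplications on elementary tensors — and likewise on the other side; under these identifications the four ``edge'' maps furnished by that Proposition are exactly $H((\alpha_1,\bar\alpha_1)\otimes\mcA_2)$, $H(\mcB_1\otimes(\alpha_2,\bar\alpha_2))$, $H((\alpha_1,\bar\alpha_1)\otimes\mcB_2)$ and $H(\mcA_1\otimes(\alpha_2,\bar\alpha_2))$, and the conclusion of that Proposition is the asserted equality.

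For the ``in particular'' statement, specialise the first identity to the standard-form quasihomomorphism attached to a homomorphism $\phi\colon\mcA_2\to\mcB_2$, namely $(\id_{\mcA_2}\oplus 0,\ \id_{\mcA_2}\oplus(-\phi))\colon\mcA_2\rrarrow D_\phi\trianglerighteq\mcB_2$. For this quasihomomorphism, and for its tensor product by any locally convex algebra $\mcE$, the difference of the two homomorphisms is itself a homomorphism and is orthogonal to the second one, so Proposition~\ref{quasirules}~\ref{orthosum} identifies the induced map with $H(\phi)$, respectively $H(\mcE\otimes\phi)$; taking $\mcE=\mcB_1$ and $\mcE=\mcA_1$ converts the first identity into the claimed one. (One can also argue directly: tensoring the extension $E_1$ by $\phi$ yields a morphism of double-split extensions $E_1\potimes\mcA_2\to E_1\potimes\mcB_2$ lying over $\id\otimes\phi$ and compatible with both splits, and Lemma~\ref{morphofdoublesplit} applies.)

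For the last identity, note that — $(\alpha,\bar\alpha)$ being in standard form — the double-split extension of $(\alpha,\bar\alpha)\otimes\mcC$ is $0\to\mcB\potimes\mcC\to D_\alpha\potimes\mcC\to\mcA\potimes\mcC\to 0$ with splits $\alpha'\otimes\id_\mcC$ and $\bar\alpha'\otimes\id_\mcC$, while that of $\mcC\otimes(\alpha,\bar\alpha)$ is $0\to\mcC\potimes\mcB\to\mcC\potimes D_\alpha\to\mcC\potimes\mcA\to 0$ with splits $\id_\mcC\otimes\alpha'$ and $\id_\mcC\otimes\bar\alpha'$. The flip maps $\sigma_{\mcB,\mcC}$, $\sigma_{D_\alpha,\mcC}$ and $\sigma_{\mcA,\mcC}$ are algebra isomorphisms — the multiplication on a projective tensor product is componentwise, so the flip is always an algebra map — and, as one checks on elementary tensors, they intertwine the ideal inclusions, the quotient maps and both pairs of splits; thus $(\sigma_{\mcB,\mcC},\sigma_{D_\alpha,\mcC},\sigma_{\mcA,\mcC})$ is a morphism of double-split extensions. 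Lemma~\ref{morphofdoublesplit} then gives $H(\sigma_{\mcB,\mcC})\circ H((\alpha,\bar\alpha)\otimes\mcC)=H(\mcC\otimes(\alpha,\bar\alpha))\circ H(\sigma_{\mcA,\mcC})$, and since $\sigma_{\mcC,\mcB}\circ\sigma_{\mcB,\mcC}=\id$ forces $H(\sigma_{\mcC,\mcB})=H(\sigma_{\mcB,\mcC})^{-1}$, composing on the left with $H(\sigma_{\mcC,\mcB})$ yields the stated formula.

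The only genuine work is the bookkeeping in the first paragraph — writing down the $3\times 3$ diagram with the right assignment of which tensor factor carries which structure map and split, and verifying the rather long list of commutativity and $\sigma$-compatibility hypotheses of the cited Proposition. But, as indicated, each of those is a manifestation of the commutation of operations on independent tensor factors, so no real difficulty arises; and the one further point that needs attention, the identification of $D_{\alpha_i}\potimes(\text{term})$ with the algebra built from the tensored quasihomomorphism, is immediate from the multiplication formula in Lemma~\ref{newalgebra}.
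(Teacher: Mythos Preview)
Your proof is correct and follows the same route as the paper: the first identity via the $3\times 3$ double-split diagram Proposition (the paper sets this up in the paragraph immediately preceding the statement), and the last identity via Lemma~\ref{morphofdoublesplit} applied to the flip maps, followed by composition with $H(\sigma_{\mcC,\mcB})=H(\sigma_{\mcB,\mcC})^{-1}$. Your treatment of the ``in particular'' clause is more explicit than the paper's (which leaves it implicit); both of the arguments you give for it are fine, the direct one via Lemma~\ref{morphofdoublesplit} being the cleaner.
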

\begin{proof} The last statement follows by identifying $D_{\alpha\otimes {\mcC}}\approx D_\alpha\otimes_\pi {\mcC}$, applying Lemma \ref{morphofdoublesplit} to
\[\xymatrix{ 0\ar[r]&{\mcB}\otimes_\pi {\mcC}\ar[d]_{\sigma_{{\mcB},{\mcC}}}\ar[r] &D_{\alpha\otimes {\mcC}}\ar[d]_{\sigma_{D_\alpha,{\mcC}}}\ar[r]&{\mcA}\otimes_\pi {\mcC}\ar@/_.3cm/[l]_{\alpha\otimes {\mcC}}\ar@/^.3cm/[l]^{\bar\alpha\otimes {\mcC}}\ar[d]^{\sigma_{{\mcA},{\mcC}}}\ar[r]&0\\
0\ar[r]&{\mcC}\otimes_\pi {\mcB}\ar[r]&D_{{\mcC}\otimes\alpha}\ar[r]&{\mcC}\otimes_\pi {\mcA}\ar@/_.3cm/[l]_{{\mcC}\otimes\alpha}\ar@/^.3cm/[l]^{{\mcC}\otimes\bar\alpha}\ar[r]&0
}\]
and multiplying the result by ${\sigma_{{\mcB},{\mcC}}}_*^{-1}={\sigma_{{\mcC},{\mcB}}}_*$ on the left.
\end{proof}
In particular, for suitably nice ideals and algebras, it is easy to see that for "unbounded $({\mcA}_i,{\mcB}_i)$ cycles" $[D_i]$ we get $Qh([D_1]\potimes {\mcB}_2)=Qh([D_1])\potimes {\mcB}_2$. Hence we may apply the above to obtain
$$H({\mcB}_1\otimes_\pi [D_2])\circ H([D_1]\otimes_\pi {\mcA}_2)=H([D_1]\otimes_\pi {\mcB}_2)\circ H({\mcA}_1\otimes_\pi  [D_2]).$$

We will make use of the following result, which shows that the induction process is automatically compatible with stabilisation for stable functors,  later on:
\begin{Prop}\label{explicitstab} Let $H$ be a $J$-stable, split exact functor. Then for every quasihomomorphism $(\alpha,\bar\alpha)$ from ${\mcA}$ to ${\mcB}$ the following diagram commutes:
\[\xymatrix{H({\mcA})\ar[d]_{\theta^J_{\mcA}}\ar[rr]^{H(\alpha,\bar\alpha)}&&H({\mcB})\ar[d]^{\theta^J_{\mcB}}\\
H({\mcA}\otimes_\pi J)\ar[rr]_{H((\alpha,\bar\alpha)\otimes J)}&&H({\mcB}\otimes_\pi J)
}\]
\end{Prop}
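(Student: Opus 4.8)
The plan is to realise the square as the image under $H$ of a morphism of double split extensions, and then to appeal to Lemma~\ref{morphofdoublesplit}. First I would put $(\alpha,\bar\alpha)$ in standard form, replacing $\hat{\mcB}$ by $D_\alpha$; this changes neither $H(\alpha,\bar\alpha)$ nor $H((\alpha,\bar\alpha)\otimes J)$, since by Definition~\ref{quasiinduced} the former is already computed through the canonical split exact sequence $E\colon 0\to\mcB\to D_\alpha\to\mcA\to 0$ of Lemma~\ref{newalgebra} with its two splits $\alpha'=\id_\mcA\oplus 0$ and $\bar\alpha'$, while by Definition~\ref{outerprods} the latter is $H(\alpha'\otimes J,\bar\alpha'\otimes J)$. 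Recall also that $\theta^J_\mcA=H(\iota_\mcA)$ and $\theta^J_\mcB=H(\iota_\mcB)$, where $\iota_\mcC\colon\mcC\to\mcC\otimes_\pi J$, $c\mapsto c\otimes p$ (Definition~\ref{functorprops}).

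The key step is to produce a morphism between the double split extensions $E$ and $E\otimes_\pi J$. Since $p$ is idempotent, $\iota_\mcA$, $\iota_\mcB$ and $\iota_{D_\alpha}\colon D_\alpha\to D_\alpha\otimes_\pi J$, $d\mapsto d\otimes p$, are homomorphisms of locally convex algebras; and because $\otimes_\pi$ preserves split exactness (as noted after Definition~\ref{outerprods}), $E\otimes_\pi J$ is again a double split exact sequence. Under the canonical vector space identification $(\mcA\oplus\mcB)\otimes_\pi J\cong(\mcA\otimes_\pi J)\oplus(\mcB\otimes_\pi J)$ this identifies $D_\alpha\otimes_\pi J$ with the algebra $D_{\alpha\otimes J}$ attached to $(\alpha,\bar\alpha)\otimes J$, carrying the two splits of $E\otimes_\pi J$ to $\alpha'\otimes J$ and $\bar\alpha'\otimes J$. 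Granting this, a direct computation on elements $(a,b)\otimes p\leftrightarrow(a\otimes p,b\otimes p)$ shows that $(\iota_\mcB,\iota_{D_\alpha},\iota_\mcA)$ is a morphism of double split extensions as in diagram~(\ref{homologycommutes}): the squares of structure maps commute, and $\iota_{D_\alpha}\circ\alpha'=(\alpha'\otimes J)\circ\iota_\mcA$, $\iota_{D_\alpha}\circ\bar\alpha'=(\bar\alpha'\otimes J)\circ\iota_\mcA$.

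Lemma~\ref{morphofdoublesplit} applied to this morphism then yields
\[H(\iota_\mcB)\circ H(\alpha',\bar\alpha')=H(\alpha'\otimes J,\bar\alpha'\otimes J)\circ H(\iota_\mcA),\]
which reads $\theta^J_\mcB\circ H(\alpha,\bar\alpha)=H((\alpha,\bar\alpha)\otimes J)\circ\theta^J_\mcA$, i.e.\ the square commutes. (Only split exactness of $H$ enters here; $J$-stability is what makes the vertical maps isomorphisms.)

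The main obstacle is the bookkeeping in the middle paragraph: one has to verify carefully that $D_\alpha\otimes_\pi J\cong D_{\alpha\otimes J}$ as a locally convex algebra, not merely as a locally convex space, and that $\iota_{D_\alpha}$ simultaneously respects the ideal $\mcB\otimes_\pi J$, the quotient $\mcA\otimes_\pi J$ and both splits — the crucial point being that $\iota_{D_\alpha}$ is multiplicative precisely because $p^{2}=p$. A variant that avoids drawing the large diagram is to invoke part~(\ref{quasipostcompose}) of Proposition~\ref{quasirules} with $\psi=\iota_{D_\alpha}$ together with part~(\ref{quasiprecompose}) applied to $\iota_\mcA$, after noting that $\iota_{D_\alpha}\circ(\alpha',\bar\alpha')$ and $((\alpha,\bar\alpha)\otimes J)\circ\iota_\mcA$ are one and the same quasihomomorphism $\mcA\rrarrow D_\alpha\otimes_\pi J\trianglerighteq\mcB\otimes_\pi J$; this is the same computation in different packaging.
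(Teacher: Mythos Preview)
Your argument is correct and follows exactly the paper's own route: identify $D_{\alpha\otimes J}\cong D_\alpha\otimes_\pi J$ and apply Lemma~\ref{morphofdoublesplit} to the morphism $(\iota_\mcB,\iota_{D_\alpha},\iota_\mcA)$ of double split extensions. Your write-up is in fact more detailed than the paper's terse version, and the extra observations (why $\iota_{D_\alpha}$ is multiplicative, the alternative via Proposition~\ref{quasirules}) are sound embellishments rather than departures.
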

\begin{proof} Identify
$D_{\alpha\otimes J}\approx D_\alpha\otimes_\pi J$ and apply Lemma \ref{morphofdoublesplit} to 
\[\xymatrix{ 0\ar[r]&{\mcB}\ar[d]_{\iota_{\mcB}}\ar[r] &D_{\alpha}\ar[d]_{\iota_{D_{\alpha}}}\ar[r]&{\mcA}\ar@/_.3cm/[l]_{\alpha}\ar@/^.3cm/[l]^{\bar\alpha}\ar[d]^{\iota_{\mcA}}\ar[r]&0\\
0\ar[r]&{\mcB}\otimes_\pi J\ar[r]&D_{\alpha}\otimes_\pi J\ar[r]&{\mcA}\otimes_\pi J\ar@/_.3cm/[l]_{\alpha\otimes J}\ar@/^.3cm/[l]^{\bar\alpha\otimes J}\ar[r]&0
}\]
where the maps $\iota$ are inducing the stabilisation isomorphism under $H$ and make the diagram commute.
\end{proof} 
Let $\hat \mcB$ be an algebra containing $\scy \mcB$ as an ideal. Set $x:[0,1]\to \mR,\; t\mapsto t$, and $\sCo_i\mcB:=\{f\in\scy\mcB| f(i)=0\}$. Then $(x-i)\sCo_i\mcB=\sCo_i\mcB$. Hence, if $(\alpha,\bar\alpha):\mcA\rrarrow \hat\mcB\trianglerighteq \scy\mcB$ is a quasihomomorphism, we get quasihomomorphisms
$$(\alpha_i,\bar\alpha_i):\mcA\rrarrow \hat B/\sCo_i\mcB\trianglerighteq B.$$
\begin{Def} A diffotopy is a quasihomomorphism $(\alpha,\bar\alpha):{\mcA}\rrarrow \hat{\mcB}\trianglerighteq \scy {\mcB}$ \end{Def}
We have:
\begin{Lem}\label{inductionvsdiffo} If $H$ is a diffotopy invariant, split exact functor and $(\alpha,\bar\alpha):{\mcA}\rrarrow \hat {\mcB}\trianglerighteq \scy {\mcB}$  a diffotopy with $\scy B$ and ideal in $\hat\mcB$, then
$$H(\alpha_0,\bar\alpha_0)=  H(\alpha_1,\bar\alpha_1).$$
\end{Lem}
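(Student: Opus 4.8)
The plan is to realize the two evaluated quasihomomorphisms $(\alpha_0,\bar\alpha_0)$ and $(\alpha_1,\bar\alpha_1)$ as the two endpoints of a single diffotopy of quasihomomorphisms, and then invoke diffotopy invariance of $H$ together with the functoriality of the induction process (Lemma \ref{morphofdoublesplit} and Proposition \ref{quasirules}). The key structural observation is the identity $(x-i)\sCo_i\mcB=\sCo_i\mcB$ recorded just before the statement: it says that $\sCo_i\mcB$, as an ideal of $\hat\mcB$, is "absorbed" by multiplication with $x-i$, so that the quotients $\hat\mcB/\sCo_i\mcB$ carry honest $\mcB$-module structures and the pairs $(\alpha_i,\bar\alpha_i):\mcA\rrarrow \hat\mcB/\sCo_i\mcB\trianglerighteq \mcB$ are genuine quasihomomorphisms (this is exactly why the evaluated objects are defined). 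So the first step is to pass from $(\alpha,\bar\alpha)$ to the associated split exact sequence $E:=\bigl(0\to \scy\mcB\to D_\alpha\to\mcA\to 0\bigr)$ of Lemma \ref{newalgebra}, with its two splits $\alpha',\bar\alpha'$.

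Next I would evaluate this sequence at the endpoints. Applying the quotient maps $\scy\mcB\to \mcB$, $f\mapsto f(i)$, one gets a morphism of double-split extensions from $E$ to $E_i:=\bigl(0\to \mcB\to D_{\alpha_i}\to\mcA\to 0\bigr)$ — here one uses $D_{\alpha_i}\cong D_\alpha/\sCo_i\mcB$, which follows from the absorption identity — and by Lemma \ref{morphofdoublesplit} (with $\phi_3=\id_\mcA$, $\phi_1=ev_i$) we obtain $H(ev_i)\circ H(\alpha,\bar\alpha)=H(\alpha_i,\bar\alpha_i)$, where now $H(\alpha,\bar\alpha):H(\mcA)\to H(\scy\mcB)$ is the morphism induced by the diffotopy viewed as a quasihomomorphism into $\scy\mcB$. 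Since $H$ is split exact, the class $H(\alpha,\bar\alpha)$ lives in $H(\scy\mcB)$ and composing with $H(ev_0)$ versus $H(ev_1)$ is precisely where diffotopy invariance enters.

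The final step is to apply the hypothesis that $H$ is diffotopy invariant: since $ev_0,ev_1:\scy\mcB\to\mcB$ are the two evaluations of the identity diffotopy $\id:\scy\mcB\to\scy\mcB$ (more precisely of $\scy\mcB\to\scy(\scy\mcB)$ realizing the constant-in-the-new-variable map, or directly by Definition \ref{functorprops} applied to $\phi=\id_{\scy\mcB}$), we get $H(ev_0)=H(ev_1)$ on $H(\scy\mcB)$. Combining with the two identities from the previous step yields $H(\alpha_0,\bar\alpha_0)=H(ev_0)\circ H(\alpha,\bar\alpha)=H(ev_1)\circ H(\alpha,\bar\alpha)=H(\alpha_1,\bar\alpha_1)$, as claimed.

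The main obstacle I expect is not conceptual but the bookkeeping in the first two steps: one must check carefully that $\sCo_i\mcB$ really is an ideal in $\hat\mcB$ (so that the quotient is an algebra), that the absorption identity $(x-i)\sCo_i\mcB=\sCo_i\mcB$ genuinely produces the $\mcB$-bimodule structure making $(\alpha_i,\bar\alpha_i)$ a quasihomomorphism in the sense of the definition, and that the induced map $D_\alpha\to D_{\alpha_i}$ is compatible with both splits $\alpha'$ and $\bar\alpha'$ simultaneously, so that Lemma \ref{morphofdoublesplit} applies to the full double-split datum rather than just to one split. Once these compatibilities are in place, the proof is a two-line diagram argument.
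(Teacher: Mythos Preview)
Your proof is correct and follows the same line as the paper's: apply Lemma~\ref{morphofdoublesplit} to the morphism of double-split extensions $D_\alpha\to D_{\alpha_i}$ induced by the quotient maps $\pi_i$ (so that $H(ev_i)\circ H(\alpha,\bar\alpha)=H(\alpha_i,\bar\alpha_i)$), and then use diffotopy invariance with $\phi=\id_{\scy\mcB}$ to identify $H(ev_0)=H(ev_1)$. The paper records only the diagram and leaves the last step implicit; your version simply spells out the bookkeeping.
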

\begin{proof} Denote $\pi_i:\hat \mcB\to \hat\mcB/\sCo_i\mcB$ the quotient maps, and apply Lemma \ref{morphofdoublesplit} to
\[\xymatrix{ 0\ar[r]&\scy {\mcB}\ar[d]_{\pi_i}\ar[r] &D_{\alpha}\ar[d]_{\id_{\mcA}\oplus \pi_i}\ar[r]&{\mcA}\ar@/_.3cm/[l]_{\alpha'}\ar@/^.3cm/[l]^{\bar\alpha'}\ar[d]^{\id_{\mcA}}\ar[r]&0\\
0\ar[r]&{\mcB}\ar[r]&D_{\alpha_i}\ar[r]&{\mcA} \ar@/_.3cm/[l]_{\alpha_i'}\ar@/^.3cm/[l]^{\bar\alpha_i'}\ar[r]&0.
}\]
\end{proof}

\subsection{The action of $K$-theory}\label{Kaction}
In this section, $K(\mcA)$ denotes the algebraic $K$-theory of an algebra $\mcA$, $\cup$ the outer product $K(\mcA)\otimes K(\mcB)\to K(\mcA\potimes\mcB)$. The notion of invariance under inner automorphisms was given in Definition \ref{functorprops}.

\begin{Prop}\label{pairing} Let $H$ be a split exact, $M_2$-stable functor from locally convex algebras to abelian groups. For every locally convex algebras $\mcA$  and $\mcB$ there is a product:
$$K(\mcA)\otimes H(\mcB)\to H(\mcA\potimes \mcB),\; x\otimes b\mapsto x\cdot b.$$
If $\mcA=\mC$ and $x=1\in K(\mcA)$, then 
\begin{equation}\label{unity} x\cdot b=b.\end{equation}
Furthermore, if $f:\mcA_1\to \mcA_2$, $g:\mcB_1\to \mcB_2$ are homomorphisms, $x_1\in K(\mcA_1)$, $b_1\in H(\mcB_1)$:
\begin{equation}\label{outercomp}f_*(x_1)\cdot g_*(b_1)=(f\otimes g)_*(x_1\cdot b_1).\end{equation}
If $\mcC$ is another locally convex algebra, $x\in K(\mcA)$, $y\in K(\mcB)$ and $c\in H(\mcC)$
$$(x\cup y)\cdot c=x\cdot (y\cdot c).$$
\end{Prop}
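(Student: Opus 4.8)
The plan is to realise each $K$-theory class as an $\LC$-quasihomomorphism out of $\mC$ and then apply the machinery of the previous two sections.

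\emph{Construction of the product.} A class $x\in K(\mcA)$ is represented by a formal difference $[e]-[p]$ of idempotents $e,p\in M_n(\mcA^+)$ with the same image under the augmentation $\mcA^+\to\mC$; replacing $e$ by a constant conjugate one may assume $p\in M_n(\mC)$, so $e-p\in M_n(\mcA)$. Then $\gamma(z):=ze$ and $\bar\gamma(z):=zp$ define an $\LC$-quasihomomorphism $(\gamma,\bar\gamma):\mC\rrarrow M_n(\mcA^+)\trianglerighteq M_n(\mcA)$: the three defining conditions hold because $e^2=e$, $p^2=p$, $e-p\in M_n(\mcA)$, and $M_n(\mcA)$ is an ideal of $M_n(\mcA^+)$. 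Since the projective tensor product preserves (linearly, hence algebraically) split extensions, tensoring with $\mcB$ gives an $\LC$-quasihomomorphism $(\gamma,\bar\gamma)\otimes\mcB:\mcB\rrarrow M_n(\mcA^+\potimes\mcB)\trianglerighteq M_n(\mcA\potimes\mcB)$. As $H$ is $M_2$-stable, it is $M_n$-stable (Proposition \ref{functorgeneralities}), so $\theta^{M_n}_{\mcA\potimes\mcB}$ is an isomorphism, and I set
\[x\cdot b:=\bigl(\theta^{M_n}_{\mcA\potimes\mcB}\bigr)^{-1}\bigl(H((\gamma,\bar\gamma)\otimes\mcB)(b)\bigr),\]
extended $\mZ$-linearly in $x$ over formal differences. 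This is a group homomorphism in $b$ and, granting the well-definedness below, is $\mZ$-bilinear (additivity in $x$ via Proposition \ref{functorgeneralities} -- \ref{sum} applied to the block sum of quasihomomorphisms).

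\emph{Well-definedness, unity, naturality.} Independence of the representative is proved along the usual lines: $K_0(\mcA)$ is generated by such differences modulo block sums and conjugation by invertibles over $M_\bullet(\mcA^+)$; under $H$, block sums become sums of induced maps (Proposition \ref{functorgeneralities} -- \ref{sum}), while conjugation acts trivially --- by Proposition \ref{quasirules} -- \ref{quasipostcompose} applied to the inner automorphism $\Ad_u$ of the unital algebra $M_m(\mcA^+)$, combined with the invariance of split-exact $M_2$-stable functors under conjugation of the ideal $M_m(\mcA)$ by the invertible multiplier $u$ (Proposition \ref{functorgeneralities} -- \ref{Mtwo} together with the standard factorisation of $\diag(u,u^{-1})$) --- and Proposition \ref{explicitstab} handles the matrix-size bookkeeping. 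For $(\ref{unity})$: with $\mcA=\mC$ take $n=1$, $e$ the idempotent of $\mC^+$ generating its ideal $\mC$, and $p=0$; then $\gamma=\id_\mC$ (viewed into the ideal) and $\bar\gamma=0$, so $\gamma-\bar\gamma$ is a homomorphism orthogonal to $\bar\gamma$, hence $H(\gamma,\bar\gamma)=H(\id)$ by Proposition \ref{quasirules} -- \ref{orthosum}; tensoring with $\mcB$ and using $M_1=\id$ gives $1\cdot b=b$. For $(\ref{outercomp})$: $f_*x_1$ is represented by $[M_n(f^+)e]-[p]$, whose quasihomomorphism is $M_n(f^+)\circ(\gamma,\bar\gamma)$; the morphism of double-split extensions with components $g$ on the quotient, $M_n(f^+)\otimes g$ on the middle term and $M_n(f)\otimes g$ on the ideal satisfies the hypotheses of Lemma \ref{morphofdoublesplit}, linking $(\gamma,\bar\gamma)\otimes\mcB_1$ with the quasihomomorphism of $f_*x_1$ tensored by $\mcB_2$, and combining this with the naturality of the stabilisation isomorphism $\theta^{M_n}$ yields $(\ref{outercomp})$.

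\emph{Associativity.} Write $x=[e]-[p]$ ($e,p\in M_n(\mcA^+)$) and $y=[f]-[q]$ ($f,q\in M_m(\mcB^+)$), with associated $\LC$-quasihomomorphisms $(\gamma,\bar\gamma):\mC\rrarrow M_n(\mcA^+)\trianglerighteq M_n(\mcA)$ and $(\delta,\bar\delta):\mC\rrarrow M_m(\mcB^+)\trianglerighteq M_m(\mcB)$; by $\mZ$-bilinearity of both sides it suffices to treat these. Unwinding $x\cdot(y\cdot c)$, I would first move $H((\gamma,\bar\gamma)\otimes\mcB\potimes\mcC)$ past the stabilisation isomorphism using Proposition \ref{explicitstab}, then apply Proposition \ref{doublesplitouterproduct} to the pair $(\gamma,\bar\gamma),(\delta,\bar\delta)$ (tensored with $\mcC$) to rewrite the resulting composite of induced maps, and finally absorb the matrix amplifications via naturality of $\theta$ and re-order the tensor factors via the flips $\sigma$ of Definition \ref{outerprods}; these flips cause no difficulty. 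What remains is to identify, in the quasihomomorphism picture, the outer product of $(\gamma,\bar\gamma)$ and $(\delta,\bar\delta)$ (in the sense of Proposition \ref{doublesplitouterproduct}) with a quasihomomorphism representing $x\cup y\in K(\mcA\potimes\mcB)$ --- that is, the external product formula for finitely generated projective modules in algebraic $K$-theory, carried out over the pertinent non-unital algebras and keeping track of which ideal the outer-product quasihomomorphism lives over. This identification is the step I expect to be the main obstacle.
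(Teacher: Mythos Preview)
Your approach is correct in outline, but it is considerably more elaborate than the paper's, and the obstacle you flag at the end is entirely self-inflicted.

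The paper does not go through the quasihomomorphism machinery at all. It works first in the unital case: to an idempotent $p\in M_n(\mcA)$ one associates the \emph{genuine} homomorphism $\phi_p^{\mcB}:\mcB\to M_n(\mcA\potimes\mcB)$, $b\mapsto p\otimes b$, and sets $[p]\cdot b:=H(\phi_p^{\mcB})(b)$ (modulo the $M_n$-stabilisation isomorphism). The map $[p]\mapsto H(\phi_p^{\mcB})$ is then a monoid homomorphism from idempotents to $\Hom(H(\mcB),H(\mcA\potimes\mcB))$, and by the universal property of the Grothendieck group it extends to $K(\mcA)$; well-definedness, additivity and naturality all come for free. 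Associativity is the one-line identity $\phi_p^{\mcB\potimes\mcC}\circ\phi_q^{\mcC}=\phi_{p\otimes q}^{\mcC}$ (up to the obvious matrix reshuffle), together with $[p]\cup[q]=[p\otimes q]$. The nonunital case is then handled by a two-line lemma showing that if $x\in K(J)\subseteq K(\mcA_1)$ for a split extension $0\to J\to\mcA_1\to\mcA_2\to 0$, then $x\cdot b$ automatically lands in $H(J\potimes\mcB)\subseteq H(\mcA_1\potimes\mcB)$.

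Your route---realising $x$ as a quasihomomorphism $(\gamma,\bar\gamma)$ from $\mC$, tensoring, and invoking Propositions~\ref{explicitstab} and~\ref{doublesplitouterproduct}---is morally the content of Proposition~\ref{twoinduced}, which in the paper comes \emph{after} the present proposition and shows that the two constructions agree. By building the pairing this way from the start you are forced, for associativity, to identify a \emph{composite} of two induced maps with a single induced map representing $x\cup y$; this is precisely the step you call the main obstacle, and it is where all the non-unital bookkeeping with $(\mcA^+,\mcB^+)$ versus $\mcA\potimes\mcB$ piles up. In the paper's arrangement that identification never arises: homomorphisms compose on the nose, and the cup product of idempotents is literally the tensor product of idempotents.

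So: your argument can be completed, but the simpler path is to drop the quasihomomorphism packaging for this proposition, define the pairing via $\phi_p^{\mcB}$ in the unital case, cite the universal property of the enveloping group, and reduce the nonunital case to the unital one via the split inclusion $\mcA\hookrightarrow\mcA^+$.
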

\begin{proof} By Proposition \ref{functorgeneralities}, $H$ is invariant under inner automorphisms and $M_n$-stable for all $n$. 

Assume first that $\mcA$ is unital. Let $p$ be an idempotent in $M_n(\mcA)$. Then define a morphism
$$\phi_p^{\mcB}:\mcB\to M_n(\mcA\potimes \mcB),\; b\mapsto p\otimes b.$$
This yields (by the universal property of the enveloping group) a homomorphism of groups
$$F_{\mcA}^{\mcB}:K(\mcA)\to \Hom(H(\mcB),H(\mcA\potimes \mcB)),\; [p]\mapsto H(\phi_p^{\mcB}),$$
which defines the product $x\cdot b:=F_{\mcA}^{\mcB}(x)(b)$. The compatibility with unital morphisms is straightforward.\note{ 	More precisely: $\mcA_1$, $\mcA_2$, $\mcB_1$ and $\mcB_2$ be unital, $f:\mcA_1\to \mcA_2$ be a unital homomorphism. Then we get for $[p]\in K(\mcA_1)$:
$$f_*([p])\cdot b_1=H((f\otimes \id)\circ \phi_p^{B_1})(b_1)=(f\otimes\id)_*([p]\cdot b_1),$$
thus the first equation in \ref{outercomp}; the proof of the second is similar.}{}
If $\mcC$ is another unital algebra, then for all $ x\in K(\mcA)$, $y\in K(\mcB)$ we have
$$F_{\mcA}^{\mcB\potimes \mcC}(x)\circ F_{\mcB}^{\mcC}(y)=F_{\mcA\potimes\mcB}^{\mcC}(x\cup y)$$
directly from the definitions.

To treat the nonunital case, it suffices to apply the following Lemma.
\end{proof}
\begin{Lem} If  
\[\xymatrix{0\ar[r]& J\ar[r]&\mcA_1\ar[r]^\pi&\mcA_2\ar[r]&0}\]
is a split exact sequence of locally convex algebras with $\mcA_1$ and $\mcA_2$ unital, then  $K(J)H(\mcB)\subseteq H(J\otimes \mcB)$.
\end{Lem}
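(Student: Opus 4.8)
The plan is to reduce the claim to the unital case of the product constructed above, using split exactness twice. Since $0\to J\to\mcA_1\xrightarrow{\pi}\mcA_2\to 0$ splits in $\LC$ by some morphism $s$, it is in particular linearly split, and the projective tensor product preserves linearly split extensions; hence
\[0\to J\potimes\mcB\to\mcA_1\potimes\mcB\xrightarrow{\pi\otimes\id_\mcB}\mcA_2\potimes\mcB\to 0\]
is again a split exact sequence of locally convex algebras, with split $s\otimes\id_\mcB$. Applying the split exact functor $H$ yields a short exact sequence
\[0\to H(J\potimes\mcB)\to H(\mcA_1\potimes\mcB)\xrightarrow{(\pi\otimes\id_\mcB)_*}H(\mcA_2\potimes\mcB)\to 0,\]
so that $H(J\potimes\mcB)$ is identified, as a subgroup of $H(\mcA_1\potimes\mcB)$, with $\ker\bigl((\pi\otimes\id_\mcB)_*\bigr)$. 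The symbol $K(J)H(\mcB)$ is to be read via the inclusion $\iota\colon J\hookrightarrow\mcA_1$: for $x\in K(J)$ put $\bar x\coloneqq\iota_*(x)\in K(\mcA_1)$, so that $\bar x\cdot b\in H(\mcA_1\potimes\mcB)$ is defined since $\mcA_1$ is unital. It therefore suffices to show $(\pi\otimes\id_\mcB)_*(\bar x\cdot b)=0$ for every $b\in H(\mcB)$.

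The next step is a naturality computation. Writing $\bar x$ as a difference of classes $[p]$ of idempotents $p\in M_n(\mcA_1)$ and using additivity of the product in the first variable, recall that $[p]\cdot b$ is, after the $M_n$-stabilisation isomorphism, the class $H(\phi_p^\mcB)(b)$, where $\phi_p^\mcB\colon\mcB\to M_n(\mcA_1\potimes\mcB)$, $b\mapsto p\otimes b$. Since $M_n(\pi\otimes\id_\mcB)\circ\phi_p^\mcB=\phi_{\pi_n(p)}^\mcB$ and the $M_n$-stabilisation isomorphism is natural in its argument, one gets $(\pi\otimes\id_\mcB)_*([p]\cdot b)=\pi_*([p])\cdot b$, whence $(\pi\otimes\id_\mcB)_*(\bar x\cdot b)=\pi_*(\bar x)\cdot b$. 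Finally $\pi_*(\bar x)=(\pi\circ\iota)_*(x)$, and $\pi\circ\iota\colon J\to\mcA_2$ is the zero homomorphism, which factors through the zero algebra and so induces the zero map on the algebraic $K$-theory of the non-unital algebra $J$. Hence $\pi_*(\bar x)=0$, so $(\pi\otimes\id_\mcB)_*(\bar x\cdot b)=0\cdot b=0$, and therefore $\bar x\cdot b\in H(J\potimes\mcB)$, as required.

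I do not expect a genuine obstacle here: the entire content is the identification of $H(J\potimes\mcB)$ with a kernel via split exactness, after which the vanishing $\pi_*|_{K(J)}=0$ closes the argument. The one point that deserves care is the naturality computation of the second step, since the product has so far been constructed only for a unital algebra in the first slot: one must make sure that the instance of naturality invoked --- under $\pi$ in the first variable and $\id_\mcB$ in the second --- is exactly the compatibility with unital morphisms already available, together with naturality of the stabilisation isomorphism, and does not tacitly presuppose the statement being proved.
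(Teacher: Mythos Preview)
Your proof is correct and follows essentially the same route as the paper: identify $H(J\potimes\mcB)$ with $\ker\bigl((\pi\otimes\id_\mcB)_*\bigr)$ by split exactness, then use the naturality property $(\pi\otimes\id)_*(x\cdot b)=\pi_*(x)\cdot b$ already established for unital morphisms (noting that $\pi$ is unital since $\mcA_1$, $\mcA_2$ are unital and $\pi$ is surjective) together with $\pi_*|_{K(J)}=0$. The paper's proof compresses this to the single line $(\pi\otimes\id)_*(x\cdot b)=\pi_*(x)\cdot b=0$; you have simply unpacked the implicit steps.
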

\begin{proof} Let $x\in K(J)\subseteq K(\mcA_1)$, then
$$(\pi\otimes\id)_*(x\cdot b)=\pi_*(x)\cdot b=0.$$
\end{proof}
\note{To finish the proof of the Proposition, note that if $\mcA$ and $\mcB$ are nonunital algebras, then for $x\in K(\mcA)\subseteq K(\mcA^+)$, $y\in K(\mcB)\subseteq K(\mcB^+)$ we have $(x\cup y) c= x\cdot(y\cdot c)$ for $x$ and $y$ as elements of the unitalisations. Then because $H(\mcA\potimes\mcB\potimes\mcC)\subseteq H(\mcA^+\potimes\mcB^+\potimes\mcC)$, and the product is actually in the first group by compatibility with homomorphisms, we get equality in the first group.

It remains to show compatibility with morphisms. We have a diagram
\[\xymatrix{0\ar[r]& \mcA_1\ar[r]\ar[d]^f&\mcA_1^+\ar[r]^{\pi_1}\ar[d]^{f^+}&\mC\ar[r]\ar[d]^{\id_{\mC}}&0\\
						0\ar[r]&\mcA_2\ar[r]&\mcA_2^+\ar[r]^{\pi_2}&\mC\ar[r]&0}\]
	In particular, $\pi_2\circ f^+=\pi_1$. Let $x\in K(\mcA_1)\subseteq K(\mcA_1^+)$ and $b\in H(\mcB)$, then
	$$(\pi_2\otimes 1)_*(f_*(x)\cdot b)=(\pi_2\circ f\otimes 1)_*(x\cdot b)={\pi_1}_*(x)\cdot b)=0$$
	hence $f_*(x)\cdot b)\in K(\mcA_2\potimes \mcB)$, and similarly for $(f\otimes 1)_*(x\cdot b)$.}{}
\begin{Rem} Properties \ref{unity} and \ref{outercomp} classify the pairing, because if $p\in M_n(\mcA)$, $f:\mC\to M_n(\mcA)$ the homomorphism $\lambda\mapsto \lambda p$, then
$$[p]\cdot b=[f(1)]\cdot b=H(f\otimes\id_{\mcB})(1\cdot b)=H(f\otimes \id_{\mcB})(b).$$
\end{Rem}
	
It remains to compare the two ways a given quasihomomorphism acts on $H$:
\begin{Prop}\label{twoinduced}Let $(\alpha,\bar\alpha):\mC\rrarrow\hat \mcA\trianglerighteq \mcA$ be a quasihomomorphism and  $H$ a split exact, $M_2$-stable functor. Then for every locally convex algebra $\mcB$
\begin{enumerate}
\item the map $H(\mcB)\to H(\mcA\potimes\mcB)$ given as multiplication with $K(\alpha,\bar\alpha)(1)\in K(\mcA)$ and
\item the map $H((\alpha,\bar\alpha)\otimes \mcB)=H(\iota)^{-1}\circ(H(\alpha\otimes\id_{\mcB})-H(\bar\alpha\otimes \id_{\mcB}))$ from Definition \ref{quasiinduced}
\end{enumerate}
coincide.
\end{Prop}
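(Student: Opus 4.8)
The plan is to unwind both maps appearing in the statement directly from the definitions and match them term by term; the only genuine inputs are the two characterizing properties of the $K$-theory pairing, \eqref{unity} and \eqref{outercomp} of Proposition \ref{pairing}, together with split exactness of $H$ and $K$ and the fact that $\potimes$ preserves split exact sequences of locally convex algebras.

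Since both maps in the statement are constructed through $D_\alpha$ and its two splits, I would first assume $\hat\mcA=D_\alpha$, i.e. that $(\alpha,\bar\alpha)$ is already presented by the double split exact sequence $0\to\mcA\xrightarrow{\iota_\mcA}D_\alpha\to\mC\to 0$ of Lemma \ref{newalgebra}, with genuine algebra homomorphisms $\alpha',\bar\alpha'\colon\mC\to D_\alpha$ as the two splits. Applying Definition \ref{quasiinduced} to the split exact functor $K$, one has $K(\alpha,\bar\alpha)(1)=K(\iota_\mcA)^{-1}\bigl(K(\alpha')(1)-K(\bar\alpha')(1)\bigr)$ in $K(\mcA)$, with $K(\iota_\mcA)$ injective. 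Tensoring the sequence above with $\mcB$ over $\potimes$ (which keeps it split exact) and applying the split exact functor $H$ shows that $H(\iota_\mcA\otimes\id_\mcB)=H(\iota_{\mcA\potimes\mcB})$ is injective; this is precisely the map inverted in the definition of $H\bigl((\alpha,\bar\alpha)\otimes\mcB\bigr)$ as well, once we identify $D_{\alpha\otimes\id_\mcB}\cong D_\alpha\potimes\mcB$ with splits $\alpha'\otimes\id_\mcB$, $\bar\alpha'\otimes\id_\mcB$.

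Now fix $b\in H(\mcB)$. Additivity of the pairing in its $K$-theory slot together with property \eqref{outercomp} applied to the homomorphism $\iota_\mcA$ (with $g=\id_\mcB$) and the injectivity of $H(\iota_\mcA\otimes\id_\mcB)$ give
\[
K(\alpha,\bar\alpha)(1)\cdot b=H(\iota_\mcA\otimes\id_\mcB)^{-1}\Bigl(K(\alpha')(1)\cdot b-K(\bar\alpha')(1)\cdot b\Bigr).
\]
Since $\alpha'\colon\mC\to D_\alpha$ is a homomorphism and $1\in K(\mC)$, property \eqref{outercomp} applied once more (with $f=\alpha'$, $g=\id_\mcB$) together with the normalization \eqref{unity} ($1\cdot b=b$ when the left factor is $\mC$) yields $K(\alpha')(1)\cdot b=H(\alpha'\otimes\id_\mcB)(b)$, and likewise for $\bar\alpha'$. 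Substituting,
\[
K(\alpha,\bar\alpha)(1)\cdot b=H(\iota_\mcA\otimes\id_\mcB)^{-1}\bigl(H(\alpha'\otimes\id_\mcB)(b)-H(\bar\alpha'\otimes\id_\mcB)(b)\bigr),
\]
and by Definition \ref{quasiinduced} applied to the standard-form quasihomomorphism $(\alpha,\bar\alpha)\otimes\mcB$ the right-hand side is exactly $H\bigl((\alpha,\bar\alpha)\otimes\mcB\bigr)(b)$, which proves that (i) and (ii) coincide.

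The computation is short, so I expect no serious obstacle, only bookkeeping: one must take care that the maps to which \eqref{outercomp} is applied are honest algebra homomorphisms — which is exactly why $(\alpha,\bar\alpha)$ is first put in standard form, so that $\alpha'$, $\bar\alpha'$ and $\iota_\mcA$ are homomorphisms — and one must check the identifications $D_{\alpha\otimes\id_\mcB}\cong D_\alpha\potimes\mcB$ and $\iota_{\mcA\potimes\mcB}=\iota_\mcA\otimes\id_\mcB$, together with the injectivity of $K(\iota_\mcA)$ and $H(\iota_\mcA\otimes\id_\mcB)$ that is needed to invert these maps. The hypothesis that $H$ is $M_2$-stable enters only through Proposition \ref{pairing}, which supplies the pairing used throughout.
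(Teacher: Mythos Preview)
Your argument is correct. It unwinds the two sides directly for arbitrary $\mcB$ using the two characterising properties \eqref{unity} and \eqref{outercomp} of the pairing, after passing to standard form so that $\alpha'$, $\bar\alpha'$ and $\iota_\mcA$ are honest homomorphisms to which \eqref{outercomp} applies; the identifications $D_{\alpha\otimes\id_\mcB}\cong D_\alpha\potimes\mcB$ and the injectivity of $H(\iota_\mcA\otimes\id_\mcB)$ are exactly what is needed and are justified as you say.

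The paper takes a slightly more compressed route: after the same reduction to standard form it observes that the case $\mcB=\mC$ follows directly from the definitions, and then obtains the general case by applying the scalar case to the auxiliary functor $H^{\mcB}:=H(\,\cdot\,\potimes\mcB)$, which is again split exact and $M_2$-stable. Your computation is precisely what one obtains by unfolding this functor trick: the identities $H^{\mcB}(\alpha')=H(\alpha'\otimes\id_\mcB)$, $H^{\mcB}(\iota_\mcA)=H(\iota_\mcA\otimes\id_\mcB)$, and the coincidence of the $K(\mcA)$--pairing on $H^{\mcB}(\mC)=H(\mcB)$ with the original pairing on $H(\mcB)$ (which is exactly \eqref{outercomp} with $f=\alpha'$ and \eqref{unity}). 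So the two proofs are essentially the same; the paper's version is shorter, yours makes the use of \eqref{unity} and \eqref{outercomp} explicit.
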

\begin{proof}
We may suppose, replacing by $D_\alpha$, that $\hat\mcA$ is locally convex and $\mcA$ a closed ideal in it. Then for $\mcB=\mC$ this follows directly from the definitions. The general case follows from the scalar case by applying it to the functor $H^\mcB:=H(\,\cdot\,\otimes\mcB)$.
\end{proof}
This shows that quasihomomorphisms from $\mC$ to $B$ are determined by their action on $K$-theory:
\begin{Lem}\label{comparison} Let $(\alpha_i,\bar\alpha_i):\mC\rrarrow \hat\mcB\trianglerighteq \mcB$ be two quasihomomorphisms. Then $H(\alpha_1,\bar\alpha_1)=H(\alpha_2,\bar\alpha_2)$ for all split exact $M_2$ stable functors if and only if the equality holds for the specific functor $K$.
\end{Lem}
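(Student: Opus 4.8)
The forward direction is trivial: if $H(\alpha_1,\bar\alpha_1)=H(\alpha_2,\bar\alpha_2)$ for every split exact $M_2$-stable functor, then in particular it holds for $K$, since algebraic $K$-theory is itself split exact and $M_2$-stable (indeed $\mathcal{L}^p$-stable). So the content is the converse: assuming $K(\alpha_1,\bar\alpha_1)=K(\alpha_2,\bar\alpha_2)$ as maps $K(\mC)\to K(\mcB)$, one must deduce $H(\alpha_1,\bar\alpha_1)=H(\alpha_2,\bar\alpha_2)$ for an arbitrary split exact $M_2$-stable $H$.

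The plan is to reduce everything to the action of $K$-theory established in Proposition~\ref{pairing} and the comparison result Proposition~\ref{twoinduced}. First I would observe that $K(\mC)\cong \mZ$ is generated by the class $1=[1]$, so $K(\alpha_i,\bar\alpha_i)$ is completely determined by the single element $k_i:=K(\alpha_i,\bar\alpha_i)(1)\in K(\mcB)$, and the hypothesis says precisely that $k_1=k_2$. Now apply Proposition~\ref{twoinduced} with $\mcB$ replaced by the trivial algebra (or directly in the stated generality): it tells us that the homomorphism $H(\alpha_i,\bar\alpha_i)=H((\alpha_i,\bar\alpha_i)\otimes\mC):H(\mcB)\to H(\mcB)$ — here I am tacitly using $H(\mcB\potimes\mC)=H(\mcB)$ — coincides with the map $H(\mcB)\to H(\mcB)$ given by multiplication with $k_i\in K(\mcB)$ in the pairing of Proposition~\ref{pairing}. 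Wait — one must be careful about variance: Proposition~\ref{twoinduced} is stated for quasihomomorphisms out of $\mC$ into $\hat\mcA\trianglerighteq\mcA$ and produces a map $H(\mcB)\to H(\mcA\potimes\mcB)$; applied with ``$\mcA$'' $=\mcB$ and the auxiliary algebra ``$\mcB$'' $=\mC$ it yields that $H(\alpha_i,\bar\alpha_i):H(\mC)\to H(\mcB)$ agrees with multiplication by $k_i$. Since $k_1=k_2$, the two pairing maps agree, hence $H(\alpha_1,\bar\alpha_1)=H(\alpha_2,\bar\alpha_2)$ on $H(\mC)$, and then on general arguments by naturality; but as the quasihomomorphisms have source $\mC$ there is nothing further to check.

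The one genuine obstacle is matching the variances and domains so that Proposition~\ref{twoinduced} can be invoked verbatim: the quasihomomorphisms here go from $\mC$ to $\mcB$, whereas Proposition~\ref{twoinduced} names its target algebra $\mcA$ and carries a dummy third algebra $\mcB$. So in the write-up I would explicitly set, in the notation of Proposition~\ref{twoinduced}, $\mcA:=\mcB$ and take the dummy algebra to be $\mC$; then both candidate maps $H(\mcB)=H(\mC\potimes\mcB)\to H(\mcB)$ (wait, again source is $H(\mC)$) — concretely: part~(ii) of Proposition~\ref{twoinduced} is exactly $H(\alpha_i,\bar\alpha_i)$ by Definition~\ref{quasiinduced}, and part~(i) is multiplication by $K(\alpha_i,\bar\alpha_i)(1)=k_i$; Proposition~\ref{twoinduced} asserts these coincide. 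Since $k_1=k_2$ by hypothesis, parts~(i) for $i=1,2$ are literally the same map, so parts~(ii) for $i=1,2$ are equal, i.e.\ $H(\alpha_1,\bar\alpha_1)=H(\alpha_2,\bar\alpha_2)$. This completes the proof; the only real work is bookkeeping of which algebra plays which role.
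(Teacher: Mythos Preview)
Your argument is correct and is exactly the approach the paper takes: the lemma is stated immediately after Proposition~\ref{twoinduced} as a direct consequence (``This shows that quasihomomorphisms from $\mC$ to $B$ are determined by their action on $K$-theory''), with no further proof given. Your write-up simply spells out the application of Proposition~\ref{twoinduced} with the auxiliary algebra taken to be $\mC$, which is precisely what the paper intends; the hesitations about variance can be safely deleted from the final version.
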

We also have the following useful
\begin{Lem}\label{inductionseq} Let $(\alpha,\bar\alpha):\mC\rrarrow\hat\mcB\trianglerighteq\mcB$ be a quasihomomorphism. Then there is a quasihomomorphism $(\beta,\bar\beta):\mC\rrarrow M_2(\mcB^+)\trianglerighteq M_2(\mcB)$ such that
$$H(\theta_B)\circ H(\alpha,\bar\alpha)=H(\alpha',\bar\alpha')$$
for every split exact $M_2$-stable functor, where $\theta_{\mcB}:\mcB\to M_2(\mcB)$ denotes the stabilisation.
\end{Lem}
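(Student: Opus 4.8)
The plan is to produce the auxiliary quasihomomorphism $(\beta,\bar\beta)$ explicitly from $(\alpha,\bar\alpha)$ by conjugating the pair $(\alpha\oplus 0,\bar\alpha\oplus 0):\mC\rrarrow M_2(\hat\mcB)\trianglerighteq M_2(\mcB)$ by a suitable $2\times 2$ matrix so that the target sits inside $M_2(\mcB^+)$ rather than $M_2(\hat\mcB)$. Concretely, since $\alpha(1)$ and $\bar\alpha(1)$ are idempotents in $\hat\mcB$ whose difference lies in $\mcB$, one should be able to arrange (after replacing $\hat\mcB$ by $D_\alpha$, so that $\hat\mcB=\mcA\oplus\mcB$ is a \emph{genuine} locally convex algebra with $\mcB$ a closed ideal) that the pair of homomorphisms $\mC\to M_2(\mcB^+)$ differs from the $M_2$-stabilisation of $(\alpha,\bar\alpha)$ only through a conjugation by an invertible element of $M_2(\hat\mcB)$; the point of passing to $2\times 2$ matrices is precisely to have room for such a conjugation even when $\alpha(1)$ is not a \emph{full} idempotent. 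So the first step is to write down $\beta,\bar\beta$ and check the three quasihomomorphism conditions of the definition, i.e.\ that $\beta-\bar\beta$ is $M_2(\mcB)$-valued and continuous, and likewise for the two bilinear maps into $M_2(\mcB)$; this is where the definition of $D_\alpha$ in Lemma \ref{newalgebra} and the identification ``$\mcB$ is an ideal in $D_\alpha$ with the subspace topology'' get used.

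The second step is to verify the identity $H(\theta_\mcB)\circ H(\alpha,\bar\alpha)=H(\beta,\bar\beta)$. By Lemma \ref{comparison} it is \emph{enough} to check this for the single functor $H=K$ (algebraic $K$-theory), since both sides are obtained from split exact, $M_2$-stable functors applied to quasihomomorphisms from $\mC$; this is a substantial simplification and I would state it at the outset. Alternatively — and this is likely cleaner and functor-independent — I would produce a morphism of double-split extensions relating $D_\alpha$ (stabilised by $M_2$, using the identification $M_2(D_\alpha)\approx D_{M_2(\alpha)}$ as in the proof of Proposition \ref{explicitstab}) to $D_\beta$, and invoke Lemma \ref{morphofdoublesplit} together with Remark \ref{remarkforinverse}. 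That is: exhibit the commuting ladder
\[\xymatrix{0\ar[r]&M_2(\mcB)\ar[d]\ar[r]&M_2(D_\alpha)\ar[d]\ar[r]&M_2(\mC)\ar@/_.3cm/[l]\ar@/^.3cm/[l]\ar[d]\ar[r]&0\\
0\ar[r]&M_2(\mcB)\ar[r]&D_\beta\ar[r]&\mC\ar@/_.3cm/[l]\ar@/^.3cm/[l]\ar[r]&0}\]
where the middle vertical map is the conjugation isomorphism and the right vertical map is the canonical inclusion $\mC\to M_2(\mC)$ (or its twist); compatibility of the splittings is exactly the matrix computation from Step 1. Applying Proposition \ref{explicitstab} to identify the left-hand composite with $H(\theta_\mcB)$ then finishes the argument.

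The main obstacle I anticipate is the bookkeeping in Step 1: one must choose the conjugating matrix so that \emph{both} $\beta$ and $\bar\beta$ land in $M_2(\mcB^+)$ (not merely $M_2(\hat\mcB)$) while preserving the property that their difference, and the one-sided products with $M_2(\mcB)$, stay inside the ideal $M_2(\mcB)$ and remain continuous. The natural candidate is to send $a\mapsto u\,(\alpha(a)\oplus 0)\,u^{-1}$ with $u$ a ``rotation'' exchanging the two corners (so that the scalar part $\alpha(1)$, after conjugation, becomes a \emph{fixed} matrix with entries in $\mC$, hence lands in $M_2(\mC)\subseteq M_2(\mcB^+)$, while the ``correction'' $\alpha(a)-\alpha(1)\alpha(a)$-type terms stay in $\mcB$); one then has to confirm that the analogous conjugate of $\bar\alpha$ uses a \emph{compatible} $u$ so that $\beta-\bar\beta\in M_2(\mcB)$. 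Checking associativity/multiplicativity of $\beta$ and that $u\in M_2(\hat\mcB^+)$ is invertible with the conjugation continuous on $M_2(\mcB)$ is routine but needs the $D_\alpha$-reduction to make ``$\hat\mcB$ locally convex, $\mcB$ a closed ideal'' available, as invoked in the proof of Proposition \ref{twoinduced}. Everything else — injectivity of $H(\iota')$, the diagram chase, the appeal to Proposition \ref{explicitstab} — is formal given the earlier results.
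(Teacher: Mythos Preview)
Your plan is correct and follows the same idea as the paper: replace $\hat\mcB$ by $D_\alpha$ (unitalised), stabilise $(\alpha,\bar\alpha)$ to $(\alpha\oplus 0,\bar\alpha\oplus 0):\mC\rrarrow M_2(\hat\mcB)\trianglerighteq M_2(\mcB)$, and conjugate by an invertible matrix so as to land in $M_2(\mcB^+)$.

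The paper's execution is slicker in two respects. First, rather than searching for a $u$ that conjugates $p\oplus 0$ (where $p:=\alpha(1)$) into $M_2(\mcB^+)$ directly, it first \emph{adds} the orthogonal idempotent $0\oplus(1-p)$ to both halves, obtaining $q=\mathrm{diag}(p,1-p)$ and $\bar q=\mathrm{diag}(\bar p,1-p)$; since the same orthogonal homomorphism is added on each side, additivity (Proposition~\ref{functorgeneralities}\ref{sum}) gives $H(q,\bar q)=H(p\oplus 0,\bar p\oplus 0)$. Second, the conjugating matrix is then the explicit involution $V=\bigl(\begin{smallmatrix}p&1-p\\1-p&p\end{smallmatrix}\bigr)$, for which one checks directly $VqV=\mathrm{diag}(1,0)\in M_2(\mcB^+)$ and $V\bar qV=V(\bar q-q)V+VqV\in M_2(\mcB^+)$ since $\bar q-q\in M_2(\mcB)$. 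The identity $H(\theta_\mcB)\circ H(\alpha,\bar\alpha)=H(VqV,V\bar qV)$ then follows immediately from invariance under inner automorphisms (Proposition~\ref{functorgeneralities}\ref{Mtwo}), which every $M_2$-stable functor enjoys. Your proposed verifications via Lemma~\ref{comparison} or via a morphism of double-split extensions would also succeed, but are more work than needed: once the conjugation is set up, inner-automorphism invariance does everything.
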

More precisely, there is a quasihomomorphism $(\beta,\bar\beta):\mC\rrarrow M_2(\mcB^+)\trianglerighteq M_2(\mcB)$ that is "stably inner equivalent" to $(\alpha,\bar\alpha)$.
\begin{proof}
We identify a homomorphism $\phi:\mC\to\hat\mcB$ with the idempotent $\phi(1)$. By Proposition \ref{functorgeneralities}, $H$ is invariant under inner automorphisms. Assume further, eventually replacing by $D_\alpha$, that $\hat\mcB$ is a locally convex algebra, that $\mcB$ is an ideal in $\hat\mcB$ and that $\hat\mcB$ is unital. We identify $\mcB^+$ with the subalgebra of $\hat\mcB$ generated by $\mcB$ and $1$.  Set $p:=\alpha(1)$, $\bar p:=\bar\alpha(1)$. If
\[q:=\mb p&\\&1-p\me,\;\;\bar q:=\mb \bar p&\\&1-p\me\text{ and }V:=\mb p&1-p\\1-p&p\me,\]
then the quasihomomorphisms $(p\oplus 0,\bar p\oplus 0)$ and $(q,\bar q)$ induce the same maps under any split exact  functor. Further $V^2=1$ and 
$$VqV=\mb 1&\\&0\me\in M_2(\mcB^+)\text{ and }V\bar q V=V(\bar q-q)V+VqV\in M_2(\mcB^+).$$
We may thus set $(\alpha',\bar\alpha'):=(VqV,V\bar qV)$.
\end{proof}

\section{$\LC$-Kasparov modules and abstract Kasparov modules}
We will now introduce abstract Kasparov modules, an intermediate step between Kasparov-modules and quasihomomorphisms. There are several versions of this notion of abstract Kasparov module, compare \cite{MR2207702} and \cite{CuntzMeyer}.
In the latter, constructions for abstract Kasparov modules (in the $C^*$-setting) are done by simply lifting an abstract Kasparov module back to a classical one, and then transferring the construction back to the abstract setting (compare for example \cite{CuntzMeyer}, Lemma 8.33). For the product, this leads to the notion of double Kasparov module, and as noted again in \cite{CuntzMeyer}, the conditions in the non-$C^*$-setting get extremely technical, and it is not clear how useful this construction is for bornological algebras. 

We will describe the product in a different manner below, similar to the approach we sketched for $KK$-theory. Again, we have to modify the definitions to make them applicable to our settings.

\begin{Def} Let $\mcA,\mcB$ be locally convex algebras, $\hat \mcB$ a unital algebra and $\mcB\subseteq \hat \mcB$. An abstract Kasparov module from $\mcA$ to $\mcB$ with respect to $\hat \mcB$ is a triple $(\alpha, \bar \alpha,U)$ such that $U\in \hat \mcB$ is invertible, $\alpha,\bar\alpha:\mcA\to \hat \mcB$ are two homomorphisms and the map
$$\mcA\to \hat \mcB,\;a\mapsto \alpha(a)-U^{-1}\hat\alpha(a) U$$
is actually $\mcB$-valued and continuous. We then define the associated quasihomomorphism \index{$Qh(\alpha,\hat\alpha,U)$}$Qh(\alpha,\hat\alpha,U):=(\alpha,U^{-1}\bar\alpha U)$.\note{Meyer supposed the continuity in \cite{CuntzMeyer}, Cuntz seems to have forgotten it in the OWO-report.}{}
\end{Def}
 
We proceed to define an appropriate notion of locally convex bimodule:
\begin{Def} Let $\mc{A}$ and $\mc{B}$ be locally convex algebras. An $\LC$-Kasparov module from $\mcA$ to $\mc{B}$ is given by an algebra $\hat \mcB$ containing $\mcB$, elements $\ee,F\in\hat{\mc{B}}$ and an algebra morphism $\phi:\mcA\to\hat\mcB$  such that $\ee^2=1$, $F\ee=-\ee F$, $[\phi(a),\ee]=0$ and such that the maps
\begin{enumerate}
\item  $\mcB\to \hat\mcB,\; b\mapsto bF,Fb,\ee b,b\ee$
\item\label{continmult} $\mcA\otimes \mcB\to \hat\mcB,\;(a,b)\mapsto \phi(a)b,b\phi(a)$
\item $\mc{A}\to\hat\mcB,\; a\mapsto \phi(a)(1-F^2),(1-F^2)\phi(a)$
\item $\mc{A}\to\hat\mcB,\; [\phi(a),F]$ 
\end{enumerate}
are actually $\mcB$-valued and continuous. We then call $\ee$ a grading and denote the $\LC$-Kasparov $(\mcA,\mcB)$-module by $(\hat\mcB,\phi,F)$\index{$(\hat\mcB,\phi,F)$}.
\end{Def}
Note that it suffices to suppose the map $a\mapsto \phi(a)(1-F^2)$  be continuous $\mcB$-valued and $(1-F^2)\phi(a)\in\mcB$, continuity of the second in $a$ then follows; similarly for the maps $a\mapsto \phi(a)b$ and $a\mapsto b\phi(a)$in  \ref{continmult}.   There is also an obvious notion of representation of $\LC$-Kasparov modules which we will use in the sequel. We also identify $\LC$-Kasparov modules that are isomorphic in the obvious sense.
\begin{Prop} For a given $\LC$-Kasparov $(\mcA,\mcB)$-module $(\hat \mcB,\phi,F)$ with grading $\ee$ we set
\index{$W_F$}$W_F:=\ee (1-F^2)+P_{\ee} F+P_{\ee}^{\perp} F(2-F^2)$, where $P_\ee:=\nicefrac{1}{2}(1+\ee)$ and $P^\perp_\ee:=1-P_\ee$. This defines an associated abstract Kasparov module $AKM(\hat \mcB,\phi,F):=(P_{\ee}\phi,P_{\ee}^\perp \phi,W_F)$.
\end{Prop}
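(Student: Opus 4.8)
The plan is to check the three conditions defining an abstract Kasparov module from $\mcA$ to $\mcB$ with respect to $\hat\mcB$ for the triple $(P_\ee\phi,\,P_\ee^\perp\phi,\,W_F)$: that $P_\ee\phi$ and $P_\ee^\perp\phi$ are algebra homomorphisms $\mcA\to\hat\mcB$, that $W_F$ is invertible in $\hat\mcB$, and that $a\mapsto P_\ee\phi(a)-W_F^{-1}P_\ee^\perp\phi(a)W_F$ is $\mcB$-valued and continuous. The homomorphism property is immediate: $\ee^2=1$ makes $P_\ee=\frac12(1+\ee)$ an idempotent with complement $P_\ee^\perp$, and $[\phi(a),\ee]=0$ gives $[\phi(a),P_\ee]=0$, so $(P_\ee\phi)(a)(P_\ee\phi)(a')=P_\ee^2\phi(a)\phi(a')=P_\ee\phi(aa')$, and similarly for $P_\ee^\perp\phi$.

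For the remaining two conditions I would pass to the $2\times 2$-matrix picture attached to $P_\ee$ (Remark \ref{decompose}), writing the diagonal entries $1$ for the corner units. Since $F\ee=-\ee F$, the element $F$ is off-diagonal: with $G:=P_\ee FP_\ee^\perp$ and $T:=P_\ee^\perp FP_\ee$ one has $F=\left(\begin{smallmatrix}0&G\\T&0\end{smallmatrix}\right)$, $F^2=\left(\begin{smallmatrix}GT&0\\0&TG\end{smallmatrix}\right)$, while $[\phi(a),\ee]=0$ makes $\phi(a)=\left(\begin{smallmatrix}\phi_+(a)&0\\0&\phi_-(a)\end{smallmatrix}\right)$ diagonal. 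Substituting $\ee=P_\ee-P_\ee^\perp$ into the definition of $W_F$ yields
$$W_F=\begin{pmatrix}1-GT&G\\ T(2-GT)&-(1-TG)\end{pmatrix}.$$
Invertibility then follows from the stronger identity $W_F^2=1$: its four blocks reduce to $(1-GT)^2+GT(2-GT)=1$, $(1-GT)G-G(1-TG)=0$, $T(2-GT)(1-GT)-(1-TG)T(2-GT)=0$ (using $(1-TG)T=T(1-GT)$ and that $1-GT$, $2-GT$ commute), and $T(2-GT)G+(1-TG)^2=1$; hence $W_F^{-1}=W_F$.

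Using $W_F^{-1}=W_F$, the element $P_\ee\phi(a)-W_FP_\ee^\perp\phi(a)W_F$ has matrix
$$\begin{pmatrix}\phi_+(a)-G\phi_-(a)T(2-GT)&G\phi_-(a)(1-TG)\\(1-TG)\phi_-(a)T(2-GT)&-(1-TG)\phi_-(a)(1-TG)\end{pmatrix}.$$
In the three blocks away from position $(1,1)$ a factor $1-TG=P_\ee^\perp(1-F^2)P_\ee^\perp$ sits next to $\phi_-(a)$; since $\phi(a)(1-F^2),(1-F^2)\phi(a)\in\mcB$ and $F\mcB,\mcB F,\ee\mcB,\mcB\ee\subseteq\mcB$, these three entries lie in $\mcB$, and the associated maps in $a$ are continuous because $a\mapsto\phi(a)(1-F^2)$ and $a\mapsto(1-F^2)\phi(a)$ are continuous $\mcB$-valued while left/right multiplication by the fixed elements $G,T,1-TG$ is a continuous operation $\mcB\to\mcB$ preserving $\mcB$. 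The real content is the $(1,1)$-entry $\phi_+(a)-G\phi_-(a)T(2-GT)$, for which $\phi_+(a)$ by itself need not be $\mcB$-valued. Here I would use, modulo $\mcB$: the commutator relation $[\phi(a),F]\in\mcB$, in particular $\phi_-(a)T\equiv T\phi_+(a)$; and the relations $(1-GT)\phi_+(a)\in\mcB$, $\phi_-(a)(1-TG)\in\mcB$, i.e.\ $GT\phi_+(a)\equiv\phi_+(a)$ and $\phi_-(a)TG\equiv\phi_-(a)$. Expanding $G\phi_-(a)T(2-GT)=2G\phi_-(a)T-G\phi_-(a)TGT$, both summands collapse to $\phi_+(a)$ modulo $\mcB$, so the $(1,1)$-entry is $\equiv\phi_+(a)-(2\phi_+(a)-\phi_+(a))=0$; each correction introduced along the way is of the form (fixed product of $G,T,\ee$)$\cdot$(continuous $\mcB$-valued map)$\cdot$(fixed product), hence continuous and $\mcB$-valued. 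I expect the main obstacle to be exactly this last step — organizing the substitutions so that every intermediate term is visibly $\mcB$-valued — and it is precisely there (and in the identity $W_F^2=1$) that the coefficient $2-F^2$ appearing in $W_F:=\ee(1-F^2)+P_\ee F+P_\ee^\perp F(2-F^2)$ is needed.
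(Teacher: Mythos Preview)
Your proof is correct. The matrix picture via $P_\ee$ makes every step explicit, and your verification of $W_F^2=1$ and of the $\mcB$-values entry by entry is clean.

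The paper, however, organises the computation differently and more compactly. It never writes $W_F$ as a $2\times2$-matrix. For invertibility it simply expands
$W_F^2=(1-F^2)^2+(P_\ee+P_\ee^\perp)F^2(2-F^2)=1$
directly, using $\ee^2=1$ and $F\ee=-\ee F$. For the $\mcB$-valuedness it does \emph{not} compute $P_\ee\phi(a)-W_FP_\ee^\perp\phi(a)W_F$; instead it computes the simpler expression $P_\ee\phi(a)W_F-W_FP_\ee^\perp\phi(a)$, observes that (because $P_\ee P_\ee^\perp=0$ and $FP_\ee^\perp=P_\ee F$) most terms vanish and what remains is $P_\ee[\phi(a),F]$ plus terms of the form $\phi(a)(1-F^2)$ or $(1-F^2)\phi(a)$, all manifestly $\mcB$-valued and continuous. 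Only then does it multiply by $W_F$ on one side, using that multiplication by $W_F$ is a continuous self-map of $\mcB$. This sidesteps precisely the ``organising the substitutions'' difficulty you flagged in the $(1,1)$-entry: the nontrivial cancellation $2\phi_+(a)-\phi_+(a)$ you had to arrange by hand is absorbed into the single multiplication by $W_F$ at the end. Your approach buys explicitness; the paper's buys brevity and avoids the delicate step you identified as the main obstacle.
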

\note{\marginpar{W F invertible, do alg. case for KM}}{}
\begin{proof}
$W_F$ is it's own inverse:
\begin{align*}W_F^2=(1-F^2)^2+(P_{\ee}+P_{\ee}^\perp) F^2(2-F^2)=1,
\end{align*} 
and  modulo a continuous $\mcB$-valued map we have
\begin{equation}\label{AKM}P_\varepsilon\phi(a) W_F-W_F P_\varepsilon^\perp\phi(a)=P_{\ee}[\phi(a),F]-\phi(a)(1-F^2)\in J.\end{equation}
As follows from the hypothesis, multiplication by $W_F$ is continuous on $\mcB$, thus the result follows by multiplying equation \ref{AKM} by $W_F$ on the left. \ \note{ One really gets the result $$P_\ee[a,F]+P_\ee (a (1-F^2)+(1-F^2)a).$$ However, the result is shorter written like this.}
\end{proof}
We may thus give the following 
\begin{Def} For every $\LC$-Kasparov module $(\hat B,\phi,F)$ we define
$$\index{$Qh(\hat \mcB,\phi,F$)} Qh(\hat\mcB,\phi,F):=Qh(AKM(\hat\mcB,\phi,F)),$$
and if $H$ is a split exact functor
$$H(\hat\mcB,\phi,F):=H(Qh(\hat\mcB,\phi,F))=H(Qh(AKM(\hat\mcB,\phi,F))).$$
\end{Def}
The point of this definition is the following: Using the usual transformation $F\mapsto U_F$ (see below) is adequate in the setting of $p$-summable Fredholm modules, but it is not, for example, on smooth functions. 
\begin{Prop}\label{makeinvertible} Let $(E,\phi,F)$ be a $C^*$-Kasparov $(A,B)$-module with grading $\ee$ such that $F=F^*$ and $||F||\leq 1$, set $U_F:=\ee\sqrt{1-F^2}+F$\index{$U_F$}. Then the $C^*$-quasihomomorphisms 
\begin{align*}(P_{\ee}\alpha,U_FP_{\ee}^\perp \alpha U_F)  \text{ and } (P_{\ee} \alpha,W_FP^{\perp}_{\ee}\alpha W_F)\end{align*}
are equivalent.
\end{Prop}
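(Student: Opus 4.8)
I want to show the two $C^*$-quasihomomorphisms $(P_\ee\alpha, U_F P_\ee^\perp\alpha U_F)$ and $(P_\ee\alpha, W_F P_\ee^\perp\alpha W_F)$ are equivalent, where $U_F=\ee\sqrt{1-F^2}+F$ and $W_F$ is the element introduced above. Since both have the same first component $P_\ee\alpha$, the natural strategy is to produce a homotopy (in the $C^*$-setting, an ordinary continuous homotopy suffices) connecting the two second components through second components of quasihomomorphisms relative to the same ideal $B$. The obvious candidate is to interpolate between $U_F$ and $W_F$ by a path of invertible (indeed, since we can arrange them self-inverse or at least invertible, unitary-like) elements $Z_t$ in $\hat B=\mathcal{M}(B)$ or $\mathbb{B}(E)$, and check that $(P_\ee\alpha, Z_t P_\ee^\perp\alpha Z_t)$ is a $C^*$-quasihomomorphism for each $t$, depending norm-continuously on $t$.

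**Key steps.** First I would record the two exact formulas: $U_F=\ee\sqrt{1-F^2}+F$ is unitary because $F=F^*$, $\|F\|\le 1$, $\ee^2=1$, $\ee F=-F\ee$; and $W_F=\ee(1-F^2)+P_\ee F+P_\ee^\perp F(2-F^2)$ is its own inverse as computed above. Second, I would observe that both satisfy the same intertwining relation modulo $B$: $P_\ee\alpha(a)\,U_F - U_F\,P_\ee^\perp\alpha(a)\in \mathbb{K}\otimes B$ and likewise for $W_F$, because both are built from $F$, $\ee$, polynomials/continuous functions of $F^2$, and modulo the Kasparov relations ($[\phi(a),F]$, $\phi(a)(1-F^2)$, $[\phi(a),\ee]$ all in the ideal) these elements "look the same" — in fact $U_F$ and $W_F$ differ by something in the $C^*$-algebra generated by $\ee$ and $1-F^2$, and multiplying by $P_\ee^\perp\alpha(a)$ lands in the ideal. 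Third, I would build the homotopy: since both $U_F$ and $W_F$ are obtained from $F$ by applying the "same" functional-calculus-type recipe with $\ee$-twisting, one can linearly interpolate inside the relevant function algebra, e.g. replace $\sqrt{1-F^2}$ by $(1-s)\sqrt{1-F^2}+s(1-F^2)$-type expressions, or more cleanly write $U_F$ and $W_F$ as the values at $t=0,1$ of a path $V_t$ of invertibles in $C([0,1],\hat B)$ with $V_t$ congruent mod $B$ to an element implementing the same quasihomomorphism. Concretely, I expect the cleanest route is: both $U_F$ and $W_F$ are of the form $\ee g(F^2) + h(F)$ for continuous functions with $g>0$ on a neighborhood and $h$ such that the whole thing is invertible; the set of such $(g,h)$ giving invertibles is convex or at least path-connected, so interpolate the functions. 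Then $(P_\ee\alpha, V_t P_\ee^\perp\alpha V_t)$ is a $C^*$-quasihomomorphism $\mathcal{A}\rightrightarrows C([0,1],\hat B)\trianglerighteq C([0,1],B)$, i.e. a homotopy of quasihomomorphisms in the standard sense, connecting the two; hence they are equivalent.

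**Main obstacle.** The technical heart is verifying that along the whole path the relevant maps are genuinely $B$-valued (the ideal is $\mathbb{K}\otimes B$, or $B$ after the identification used), i.e. that $P_\ee\alpha(a)V_t-V_t P_\ee^\perp\alpha(a)\in B$ for all $t$ and continuously so — this requires carefully tracking which combinations of $\ee$, $F$, $1-F^2$, $\alpha(a)$ lie in the ideal, using $[\phi(a),\ee]=0$, $[\phi(a),F]\in J$, $\phi(a)(1-F^2)\in J$, and the algebra structure ($J$ an ideal), and confirming the interpolating functions don't destroy these memberships (they won't, since $J$ is norm-closed and the functional calculus is a norm-limit of polynomials vanishing appropriately). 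The other point to be a little careful about is that the interpolation keeps $V_t$ invertible throughout; one checks $V_t^* V_t$ (or $V_t^2$) is bounded below uniformly in $t$ by an explicit computation analogous to $W_F^2=1$ and $U_F^*U_F=1$. I expect no conceptual difficulty beyond this bookkeeping, since in the $C^*$-world homotopy of quasihomomorphisms is exactly the equivalence relation in play and the two elements are manifestly "the same $F$ dressed up differently."
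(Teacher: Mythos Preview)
Your strategy---a direct continuous homotopy of invertibles $V_t$ connecting $U_F$ to $W_F$ while maintaining the quasihomomorphism condition---is viable and can be made to work, but it is a different route from the paper's, and one detail of your sketch is not quite right.

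First, the structural claim ``both $U_F$ and $W_F$ are of the form $\ee\,g(F^2)+h(F)$'' fails for $W_F$: the asymmetry $P_\ee F + P_\ee^\perp F(2-F^2)$ does \emph{not} reduce to an odd function of $F$ alone. So your proposed interpolation in a two-parameter function space $(g,h)$ is too naive. That said, the straight-line path $V_t:=(1-t)U_F+tW_F$ actually does the job: one checks $\|W_F-U_F\|<1$ by estimating the diagonal and off-diagonal blocks separately (the diagonal contributes at most $\sup_{x\in[0,1]}|x-\sqrt{x}|=\tfrac14$, the off-diagonal at most $\sup_{y\in[0,1]}\sqrt{y}(1-y)=\tfrac{2}{3\sqrt3}$), so $V_t=U_F(1+tU_F(W_F-U_F))$ stays invertible. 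Moreover $\phi(a)(W_F-U_F)\in J$ in the $C^*$-setting (since closed ideals are stable under $x\mapsto\sqrt{x}$, so $\phi(a)\sqrt{1-F^2}\in J$), whence the quasihomomorphism condition persists along the whole path. So your outline can be completed, just not quite the way you wrote it.

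The paper takes a conceptually cleaner route that avoids any explicit path or norm estimate. It passes to the doubled module $E\oplus E^{\mathrm{op}}$ with representation $\phi\oplus 0$ and observes that the matrix operators
\[
U_F'=\begin{pmatrix} F & \sqrt{1-F^2}\\ \sqrt{1-F^2} & -F\end{pmatrix},\qquad
W_F'=\begin{pmatrix} F & 1-F^2\\ 1-F^2 & (F^2-2)F\end{pmatrix}
\]
are both compact perturbations of the \emph{same} Kasparov module $(E\oplus E^{\mathrm{op}},\phi\oplus 0,F\oplus(-F))$; perturbation equivalence of Kasparov modules is already built into the theory (the $C^*$-analogue of Proposition~\ref{perturbKM}), so the associated quasihomomorphisms are equivalent. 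A permutation regrouping $E\oplus E^{\mathrm{op}}$ by parity then identifies these with the two quasihomomorphisms in the statement. What this buys over your approach is that the argument is structural rather than computational: it explains \emph{why} the two are equivalent---they are two different normalisation recipes applied to one and the same underlying cycle---without ever needing to track invertibility along a path.
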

\begin{proof}
Set 
$$U_F':=\scalebox{0.8}{$\mb F& \sqrt{1-F^2}\\ \sqrt{1-F^2}& -F\me$},\; W_F':=\scalebox{0.8}{$\mb F&(1-FF^*)\\ (1-F^*F)& (F^*F-2)F^*\me$}.$$
Then $(E\oplus E^{op},\phi\oplus 0, U_F')$ and $(E\oplus E^{op},\phi\oplus 0, W_F')$ are perturbations of the same module,  hence equivalent. A straightforward calculation shows that the quasihomomorphism induced by $(E\oplus E^{op},\phi\oplus 0, U_F')$ is equivalent to $(P_\ee\alpha,U_F P_{\ee}^\perp\alpha U_F)$; similarly for $W_F$ and $W_F'$. One may also see this by remarking that $U_F$ and $W_F$ can be obtained from passing to the Fredholm picture, i.e., regrouping $E\oplus E^{op}$ into even and odd parts by applying a permutation matrix. The conjugation of $U_F'$ and $W_F'$ by this permutation matrix yields operators with off diagonal entries $U_F$, $U_F^*$ and $W_F$, $W_F^*$, respectively. \note{ The module used in the definition is $E\oplus E^{op}=E^{(0)}\oplus E^{(1)}\oplus E^{(0)}\oplus E^{(1)}$. Hence we apply \scalebox{0.8}{$\mb 1&\\& \mb &&1\\&1&\\1&&\me\me$}. This interchanges the second and last row and second and last column. The result, on $U_F'$ is thus  of the form
\scalebox{0.8}{ $  \mb 	 &   \mb \sqrt{\cdot } &T*\\-T&\sqrt{\cdot }\me\\\mb \sqrt{\cdot } &T*\\-T&\sqrt{\cdot }\me& \me $  }}{}
\end{proof}
There is an obvious sum operation on   abstract Kasparov modules, given by 
$$(\alpha,\bar\alpha,U)+(\alpha',\bar\alpha',U'):=(\alpha\oplus\alpha',\bar\alpha\oplus\bar\alpha',U\oplus U').$$ Further every homomorphism $\phi\in\LC(\mcA,\mcB)$ yields a canonical abstract Kasparov module $(\mcB,\phi,0)$ and also an operation $\phi^*$ on abstract Kasparov modules in the obvious way. The same statements hold for Kasparov modules. In particular, if $\psi:\mcA'\to \mcA$ is a homomorphism and $(\hat\mcB,\phi,F)$ an $\LC$-Kasparov module, then we write $\psi^*(\hat\mcB,\phi,F):=(\hat\mcB,\phi\circ\psi,F)$.
\begin{Def} Two $\LC$-Kasparov modules $(\hat\mcB_0,\phi_0,F_0)$, $(\hat\mcB_1,\phi_1,F_1)$ between locally convex algebras $\mc{A}$ and $\mc{B}$ are called 
\begin{itemize}
\item $\mc{B}$-perturbations of one another if $\hat\mcB_0=\hat\mcB_1$, $\phi_0=\phi_1$ and $a\mapsto\phi(a)(F-F'),(F-F')\phi(a)$ is $\mcB$-valued and continuous
\item diffotopic, if there is an $\LC$-Kasparov $(\mcA,\scy\mcB)$-module $(\hat\mcB,\phi,F)$ such that $\scy\mcB$ is an ideal in $\hat\mcB$, and for $i=1,2$ 
$$(\pi_i)_*(\hat\mcB,\phi,F):=(\hat\mcB/\sCo_i\mcB,\pi_i\circ \phi,\pi_i( F))=(\hat\mcB_i,\phi_i,F_i)$$
where $\pi_i:\hat \mcB\to \hat \mcB/\sCo_i\mcB$ is the quotient map.
\end{itemize} 
\end{Def}
\begin{Prop}\label{perturbKM} $\mcB$-perturbation is a weaker equivalence relation than diffotopy, and the transformation $Qh$ from $\LC$-Kasparov modules to $\LC$-quasihomo-morphisms preserves diffotopy.
\end{Prop}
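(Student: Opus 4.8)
The statement has two parts, which I would establish separately: (i) any two $\LC$-Kasparov modules that are $\mcB$-perturbations of one another are diffotopic, and (ii) $Qh$ sends a diffotopy of $\LC$-Kasparov modules to a diffotopy of $\LC$-quasihomomorphisms. For (i), let $(\hat\mcB,\phi,F_0)$ and $(\hat\mcB,\phi,F_1)$ be $\mcB$-perturbations, so that $a\mapsto\phi(a)(F_0-F_1)$ is $\mcB$-valued and continuous. Fix $\lambda\in\scy$ with $\lambda(0)=0$ and $\lambda(1)=1$; since $\lambda\in\scy$, all of its derivatives vanish at both endpoints, so $\mathbb F:=(1-\lambda)\otimes F_0+\lambda\otimes F_1$ is a path in $\hat\mcB$ that is infinitely tangent to the constant $F_i$ at the endpoint $i$. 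The ambient algebra must be chosen with care, because $\mcB$ is not assumed to be an ideal in $\hat\mcB$, and hence neither $\scy\hat\mcB$ nor $\hat\mcB+\scy\mcB$ contains $\scy\mcB$ as an ideal; instead I would take for $\hat\mcD$ the subalgebra of the multiplier algebra of $\scy\mcB$ generated by $\scy\mcB$, by the constant grading $\ee$, by the constant representation $\Phi$ with $\Phi(a)g=\phi(a)g$, and by $\mathbb F$ (acting by pointwise left multiplication). The Kasparov-module axioms for the two endpoints ($\ee\mcB\subseteq\mcB$, $F_i\mcB\subseteq\mcB$, $\mcB F_i\subseteq\mcB$, $\phi(\mcA)\mcB\subseteq\mcB$) guarantee that $\ee$, $\Phi(a)$ and $\mathbb F$ really do multiply $\scy\mcB$ into itself, so $\scy\mcB$ is an ideal in $\hat\mcD$; the same axioms show $\sCo_i\mcB$ is an ideal in $\hat\mcD$ and that evaluation at $i$ identifies $\hat\mcD/\sCo_i\mcB$ with the Kasparov module $(\hat\mcB,\phi,F_i)$ up to the usual identification by representations. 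Granting this, it suffices to check that $(\hat\mcD,\Phi,\mathbb F)$ is an $\LC$-Kasparov $(\mcA,\scy\mcB)$-module; it is then a diffotopy with $(\pi_i)_*(\hat\mcD,\Phi,\mathbb F)=(\hat\mcB,\phi,F_i)$, which is (i).

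The module axioms for $(\hat\mcD,\Phi,\mathbb F)$ are immediate by linearity in $\lambda$ — thus $\mathbb F\ee=-\ee\mathbb F$, $[\Phi(a),\ee]=0$, $[\Phi(a),\mathbb F]=(1-\lambda)[\phi(a),F_0]+\lambda[\phi(a),F_1]$, and the products $b\mathbb F,\mathbb Fb,\ee b,b\ee,\Phi(a)b,b\Phi(a)$ reduce pointwise to those for $F_0$ and $F_1$ — with the single exception of the condition on $\Phi(a)(1-\mathbb F^2)$. There, writing $s=\lambda(t)$ and using $F_0F_1+F_1F_0=F_0^2+F_1^2-(F_0-F_1)^2$, one finds
\[1-\mathbb F^2=(1-s)(1-F_0^2)+s(1-F_1^2)+s(1-s)(F_0-F_1)^2 ,\]
so that $\Phi(a)(1-\mathbb F^2)$ is, pointwise in $t$, a combination of $\phi(a)(1-F_0^2)$ and $\phi(a)(1-F_1^2)$, which lie in $\mcB$ by the hypotheses on the endpoint modules, and of $\phi(a)(F_0-F_1)^2=\bigl(\phi(a)(F_0-F_1)\bigr)(F_0-F_1)$, which lies in $\mcB$ because its left factor does (the perturbation hypothesis) and $\mcB F_i\subseteq\mcB$. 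Smoothness of the resulting path with all derivatives vanishing at the endpoints is automatic, everything being assembled from $\lambda\in\scy$, and continuity is routine.

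For (ii), let $(\hat\mcD,\Phi,\mathbb F)$ be a diffotopy between $(\hat\mcB_0,\phi_0,F_0)$ and $(\hat\mcB_1,\phi_1,F_1)$, with grading $\ee$ and $(\pi_i)_*(\hat\mcD,\Phi,\mathbb F)=(\hat\mcB_i,\phi_i,F_i)$. By the definition of $Qh$ and the proposition associating $AKM(\hat\mcD,\Phi,\mathbb F)=(P_\ee\Phi,P_\ee^{\perp}\Phi,W_{\mathbb F})$ — which also gives $W_{\mathbb F}^2=1$ and the congruence \ref{AKM} — we have $Qh(\hat\mcD,\Phi,\mathbb F)=\bigl(P_\ee\Phi,\,W_{\mathbb F}(P_\ee^{\perp}\Phi)W_{\mathbb F}\bigr)$, and \ref{AKM} together with the Kasparov-module axioms for $(\hat\mcD,\Phi,\mathbb F)$ show this is a quasihomomorphism $\mcA\rrarrow\hat\mcD\trianglerighteq\scy\mcB$, i.e.\ a diffotopy of quasihomomorphisms. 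Each $\pi_i$ is a unital homomorphism carrying the grading to that of the $i$-th endpoint module, hence fixing $P_\ee$, and $W_{\mathbb F}=\ee(1-\mathbb F^2)+P_\ee\mathbb F+P_\ee^{\perp}\mathbb F(2-\mathbb F^2)$ is polynomial in $\ee$ and $\mathbb F$, so $\pi_i(W_{\mathbb F})=W_{F_i}$. Therefore
\[(\pi_i)_*\,Qh(\hat\mcD,\Phi,\mathbb F)=\bigl(P_\ee\phi_i,\,W_{F_i}(P_\ee^{\perp}\phi_i)W_{F_i}\bigr)=Qh\bigl(AKM(\hat\mcB_i,\phi_i,F_i)\bigr)=Qh(\hat\mcB_i,\phi_i,F_i),\]
so $Qh(\hat\mcB_0,\phi_0,F_0)$ and $Qh(\hat\mcB_1,\phi_1,F_1)$ are diffotopic, as required.

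I expect the whole difficulty to lie in (i), and specifically in pinning down the ambient algebra $\hat\mcD$: because $\mcB$ need not be an ideal in $\hat\mcB$, ordinary path algebras do not work, and one is forced to realise $\hat\mcD$ inside the multiplier algebra of $\scy\mcB$ and to verify that $\mathbb F$, $\Phi(a)$ and $\ee$ are genuine multipliers of $\scy\mcB$, that $\sCo_i\mcB$ is an ideal, and that $\hat\mcD/\sCo_i\mcB$ recovers the given endpoint modules. Once $\hat\mcD$ is in place, the module-theoretic verification rests only on the displayed identity for $1-\mathbb F^2$ together with the perturbation hypothesis, and part (ii) is essentially the naturality of the $AKM$ and $Qh$ constructions under the quotient maps $\pi_i$.
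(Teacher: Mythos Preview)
Your proof is correct and follows the same approach as the paper: for (i) you interpolate linearly between $F_0$ and $F_1$ along a smooth path and reduce the only nontrivial axiom to the displayed identity for $1-\mathbb F^2$, and for (ii) you use naturality of $W_F$ and $Qh$ under the quotient maps $\pi_i$. Your treatment is in fact more careful than the paper's on two points the paper glosses over: you correctly flag that $\mcB$ is not assumed to be an ideal in $\hat\mcB$ and construct an ambient algebra $\hat\mcD$ in which $\scy\mcB$ really is an ideal (the paper's line ``$\tilde F:=fF+gG\in\scy\mcB$'' and its choice $g\in\sCo$ with $g(1)=1$ are slips), and you spell out part (ii), which the paper dismisses with ``the rest is obvious.''
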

\begin{proof}
Let $(\hat\mcB,\phi,F)$, $(\hat\mcB,\phi,G)$ be $\mcB$-perturbations of one another. Let $g\in\sCo$ with $g(1)=1$, set $f:=1-g$, define $\tilde F:=fF+gG\in\scy \mcB$. Then multiplication by $\tilde F$ on $\scy\mcB$ is continuous because multiplication by $F$ and $G$ and elements of $\scy$ is. Further
\begin{align*}a(\tilde F^2-1)=&a(f^2(T^2-1)+g^2({T'}^2-1)+ fg\left(({T'}^2-1)+(T^2-1)\right)\\&+fg\left((T-T')T'+(T'-T)T\right)
\end{align*}
which is continuous in $a$.  The rest is obvious.

\end{proof}
The next example shows that the induction process is correctly normalized.
\begin{Ex} Let $(\hat\mcB,\phi,F)$ be an $\LC$-Kasparov $(\mC,\mcB)$-module with grading $\ee$, where $\mcB$ is some locally convex algebra, such that $\phi$ is unital and $\bar S S=1$, where $F=\mb &\bar S\\S&\me$ with respect to $P_\ee$. Then we get
$$W_F=\mb 0& \bar S\\S& S\bar S-1\me ,\;\;\; (P^\perp_{\ee}\phi )^{W_F}=\mb 1&0\\0&1-S\bar S\me.$$
Hence the map $(P^\perp_{\ee}\phi )^{W_F}-P_{\ee}\,\phi$ is a homomorphism orthogonal to $P_\ee\phi$, namely the map 
$$\mC\to\mcB,\; \lambda\mapsto \lambda  \mb 0& \\ &1-S\bar S\me.$$ 
Thus for every split exact functor $H$
$$H(\hat\mcB,\phi,F)=-H(1-S\bar S)$$
 by Proposition \ref{quasirules} -- \ref{orthosum} and 
\ref{negativequasi}.

If, on the other hand, $S\bar S=1$, we obtain  the homomorphism 
$$P_\ee\phi-(P_\ee^\perp\phi)^{W_F}:\lambda\mapsto \lambda (1-\bar S S)\oplus 0,$$ 
and by Proposition \ref{quasirules} \ref{orthosum}
$$H(\hat B,\phi,F)=H(1-\bar S S).$$ 
One could also deduce the second from the first by a rotation in matrices and using diffotopy invariance.
\end{Ex}
The following is an abstract variant of \cite{KaspOp}, Theorem 5:
\begin{Prop}\label{indeximports} Let $(\bo,\phi,F)$ be a $(\mC,\mc{L}^p)$-module, and $H$ a split exact, diffotopy invariant and $\mc{L}^p$-stable functor. Then $H(Qh(\bo,\phi,F)\otimes_\pi \mcA))=n\theta$ for every locally convex algebra $\mcA$, where $\theta:H(\mcA)\to H(\mcA\otimes_\pi\mc{L}^p)$ denotes the stabilisation map, and $n$ is the Fredholm index of $F$ viewed as an operator on $\phi(1)\hsp$.
\end{Prop}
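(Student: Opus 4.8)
The plan is to reduce the statement to the scalar case $\mcA=\mC$ together with the compatibility of the induction process with stabilisation (Proposition \ref{explicitstab}), and then to identify the scalar quasihomomorphism explicitly in terms of the Fredholm index. First I would note that, by Proposition \ref{explicitstab} applied to the $\mc{L}^p$-stable functor $H$ and the quasihomomorphism $Qh(\bo,\phi,F)$ from $\mC$ to $\mc{L}^p$, the diagram relating $H(Qh(\bo,\phi,F))\colon H(\mcA)\to H(\mcA\otimes_\pi\mc{L}^p)$ to its "$\mcA$-stabilised" version commutes; more precisely, applying $H^{\mcA}:=H(\,\cdot\,\otimes_\pi\mcA)$ and using Proposition \ref{twoinduced}, the map $H(Qh(\bo,\phi,F)\otimes_\pi\mcA)$ is multiplication by the class $K(Qh(\bo,\phi,F))(1)\in K(\mc{L}^p)$. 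So it suffices to show that this $K$-theory class equals $n$ times the class of a minimal idempotent, i.e.\ $n\theta^{\mc{L}^p}$ in the notation of Proposition \ref{functorgeneralities}~\ref{ntimesstab}; then the pairing formula from Proposition \ref{pairing} gives $H(Qh(\bo,\phi,F)\otimes_\pi\mcA)=n\theta$ as desired.

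Next I would compute the scalar quasihomomorphism. Here $\phi$ is a unital representation of $\mC$ on $\hsp$, so $\phi(1)=1$, and $F\in\bo$ is an odd operator for the grading $\ee$ with $\phi(a)(1-F^2)$ and $[\phi(a),F]$ landing in $\mc{L}^p$ — which for scalar $\mcA$ just says $1-F^2\in\mc{L}^p$, i.e.\ $F$ is an $\mc{L}^p$-Fredholm module. Writing $F=\begin{pmatrix}&\bar S\\ S&\end{pmatrix}$ with respect to $P_\ee$ (exactly as in the Example preceding the Proposition), $W_F$ is the invertible element built from $F$, and $AKM(\bo,\phi,F)=(P_\ee\phi,P_\ee^\perp\phi,W_F)$ has associated quasihomomorphism $Qh=(P_\ee\phi,W_F P_\ee^\perp\phi W_F)$, which induces in $K$-theory the difference of the idempotents $P_\ee$ and $W_F P_\ee^\perp W_F$ (both equal to $P_\ee$ modulo $\mc{L}^p$, so this difference is a genuine $K_0(\mc{L}^p)$-class). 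Now $K_0(\mc{L}^p)\cong\mZ$ via the index, and I would invoke Proposition \ref{comparison} / Lemma \ref{comparison}: the induced map on every split exact $M_2$-stable functor is determined by the induced map on $K$, so it is enough to check that the $K$-theoretic class of $Qh(\bo,\phi,F)$ is exactly the Fredholm index of $S\colon P_\ee\hsp\to P_\ee^\perp\hsp$. This is the standard computation identifying the "difference idempotent" $[P_\ee]-[W_F P_\ee^\perp W_F]$ with $[\Ker S]-[\Coker S]=\ind S$, using that $1-\bar S S$ and $1-S\bar S$ are the finite-rank-like projections onto kernel and cokernel (finite rank in the $C^*$-case, trace-class and of integer trace here); the two special cases $\bar S S=1$ and $S\bar S=1$ worked out in the Example give $\mp[1-S\bar S]$ and $\pm[1-\bar SS]$, which bracket the general case after a diffotopy reducing $F$ to its polar form.

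The main obstacle is the identification in the previous paragraph of the $K_0(\mc{L}^p)$-class of $Qh(\bo,\phi,F)$ with the Fredholm index $n$ of $F|_{\phi(1)\hsp}$, i.e.\ getting the normalisation (including the sign) right. The cleanest route is to first replace $(\bo,\phi,F)$ by a $\mc{L}^p$-perturbation (legitimate by Proposition \ref{perturbKM}, since perturbation is weaker than diffotopy and $Qh$ preserves diffotopy) in which $F$ has been deformed so that $\bar S S$ and $S\bar S$ are spectral projections, reducing $S$ to a partial isometry plus a finite-rank correction; then the Example computes $H(\bo,\phi,F)=H(1-\bar S S)-H(1-S\bar S)$ for split exact $H$, and specialising to $H=K$ gives precisely $[\Ker S]-[\Coker S]=\ind S=n$ in $K_0(\mc{L}^p)=\mZ$. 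Feeding this back through Proposition \ref{twoinduced} and Proposition \ref{pairing}~\ref{outercomp}, and using that $H$ is $\mc{L}^p$-stable so that multiplication by $n\theta^{\mc{L}^p}$ on $H(\mcA)$ is literally $n\theta$, completes the proof; diffotopy invariance of $H$ is what guarantees that none of these perturbations and polar-form homotopies change the induced map.
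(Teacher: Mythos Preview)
Your overall strategy is sound and close to the paper's: reduce to the scalar case, perturb $F$ inside $\mc{L}^p$ to a normal form, and read off the index via the Example preceding the Proposition. Two points deserve correction, and one comparison is worth making.

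First, you assert that $\phi$ is unital, hence $\phi(1)=1$. Nothing in the hypotheses forces this; the statement explicitly speaks of ``$F$ viewed as an operator on $\phi(1)\hsp$'', which signals that $P:=\phi(1)$ may be a proper projection. The paper handles this by replacing the module by $(P\bo P,P\phi,PFP)$ and observing that the complementary piece $(P^\perp\bo P^\perp,P^\perp\phi,P^\perp FP^\perp)$ is degenerate (its $\phi$ is zero), so contributes nothing. You should insert this reduction.

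Second, your perturbation step is not quite aligned with the Example. Passing to the polar part $V$ of $S$ is a legitimate $\mc{L}^p$-perturbation (since $1-|S|\in\mc{L}^p$), but afterwards $V^*V=1-P_{\ker S}$ and $VV^*=1-P_{\operatorname{coker}S}$, so in general neither equals $1$, and the Example only treats the cases $\bar S S=1$ or $S\bar S=1$. Your claimed general formula $H(\bo,\phi,F)=H(1-\bar S S)-H(1-S\bar S)$ is therefore not yet justified; you would still need to split off the finite-dimensional kernel/cokernel summand and argue separately. The paper avoids this by a cleaner perturbation: assuming (after a finite-rank, hence $\mc{L}^p$, perturbation) that $T$ is injective, it sets $T':=T^{-1}$ on $\Im(T)$ and $0$ on $\Im(T)^\perp$, checks $T'-S\in\mc{L}^p$, and replaces $\bar S$ by $T'$. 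Now $T'T=1$ on the nose, the Example applies verbatim, and $1-TT'$ is the finite-rank projection onto $\operatorname{coker}T$, giving $H(\bo,\phi,F)=-H(1-TT')=-(\dim\operatorname{coker}T)\,\theta=n\theta$ by Proposition~\ref{functorgeneralities}\ref{ntimesstab}. The opposite sign of the index is handled by passing to $-F$ with grading $-\ee$.

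Finally, your reduction to $\mcA=\mC$ via the $K$-theory action (Propositions~\ref{pairing} and~\ref{twoinduced}, Lemma~\ref{comparison}) is correct and pleasant, but the paper's route is more direct: simply apply the scalar case to the functor $H(\,\cdot\,\potimes\mcA)$, which is again split exact, diffotopy invariant and $\mc{L}^p$-stable. Both work; yours makes the role of $K_0(\mc{L}^p)\cong\mZ$ explicit, while the paper's avoids any appeal to the pairing.
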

\begin{proof} It suffices to prove the case with $\mcA=\mC$, the general case follows by applying the result to the functor $H(\,\cdot\,\potimes \mcA)$. Setting $P:=\phi(1)$ and replacing by the module 
$$(P\bo P,P\phi,PFP),$$ we may assume that $\phi$ is unital (because $H(P^\perp\bo P^\perp,P^\perp\phi,P^\perp FP^\perp)=0$). With respect to the grading, $F= \scalebox{.8}{$\mb &S\\T&\me$}$. By hypothesis, $T$ is Fredholm and hence has closed cokernel. 

Assume that $T$ has negative index. Without loss of generality, it is injectif, and we can define the bounded operator $T'$ as $T^{-1}$ on $\Im(T)$ and zero on $\Im(T)^{\perp}$. We have 
$$(T'-S)T=1-ST\in\mc{L}^p,$$
and as $T$ is invertible modulo $\mc{L}^p$, $T'-S\in\mc{L}^p$ follows. Therefore the module $(\bo, \phi, \scalebox{.8}{$\mb&T'\\T&\me$})$ is equivalent to the former one. Now the above example shows that 
$$H(\bo,\phi,F)=-H(1-TT')=-n\theta$$ 
by Proposition \ref{functorgeneralities} \ref{ntimesstab}.

In case that $T$ has negative index, we reduce to the former case by passing to the module $(\hat\mcB,\phi,-F)$ with grading $-\ee$.

\end{proof}

\subsection{Example: The Bott element}\label{smoothbottel}
We now construct the smooth analogue of the Bott elements. We assume in this section that the dimension $n$ is even. The point is that otherwise we would have to work with graded functors, as certain gradings would not be inner. 

We recall that the $C^*$-Kasparov $(\mC,S^n\otimes\mC_n)$-module $y_n$ was defined by Kasparov (\cite{KaspOp}, see also \cite{kkT}) as $(S^n\otimes\mC_n,1,q(D_2))$, where $D_2$ denotes the operator of multiplication by the inclusion $\mR^n\hookrightarrow \mC_n$ and $1$ denotes the action of $\mC$ given by scalar multiplication. In \cite{KaspOp}, the grading is the one coming from the natural grading of the Clifford algebras. We may assume that the algebra $S^n\otimes\mC_n$ is trivially graded and the grading on the Hilbert module $S^n\otimes\mC_n$ is given by left multiplication with the volume element. This does not change the class in $KK$-theory, because it corresponds to applying a graded Morita context. In fact, more generally:
\begin{center} If $A$ is a $C^*$-algebra with an inner grading $\kappa_U$ such that $U^2=1$, then there is a graded Morita context between $A$ and $A$ with the trivial grading.
\end{center} The context is given by letting $(A, \kappa_U)$ (with grading) act on the Hilbert $(A,\id_A)$-module (trivially graded) $A$ with grading induced by left multiplication with $U$. Taking Kasparov products with the corresponding $KK$-equivalence transforms the grading exactly to the ones used above. Note that this does \textit{not} signify forgetting the grading (in fact,  $x_n$ is trivial in $KK$ if we just view it as ungraded).

As we need to define an $\LC$-Kasparov $(\mC,\ssu^n\otimes\mC_n)$-module, the operator $q(D_2)$ used in the $C^*$-setting is inadequate, as $1-q(D_2)^2=(1+D_2^2)^{-1}$ is not a Schwartz function. The idea, in dimension $1$, is that it suffices to replace the inclusion $f:\mR\to \mC_1$ by the function $t\mapsto (0 ,te^{t^2})$, or any other odd real-valued function $\fun$ growing sufficiently fast at infinity\ifthenelse{\boolean{notes}}{: In the unbounded picture, all we have to do is find a function such that $1-q(f)^2=(1+f^2)^{-1}$ is Schwartz, as the commutator-condition is trivial}{};  in arbitrary dimension we define $\LC$-Kasparov $(\mC,\ssu^n\otimes \mC_n)$-modules \index{$y_n^\infty$}
$$y_n^\infty:=(\fn_b(\mR^n)\otimes\mC_n,1,q(\fun(D_2))).$$ 
We then have the following weak multiplicativity property   (in contrast to the classical setting)
\begin{Lem} Let $H$ be a split exact, diffotopy invariant $M_2$-stable functor, $m,n\in 2\mN$. Denote $D_{n,2}$ the operator $D_2$ in dimension $n$. Then 
\begin{align*} &H((\fn_b(\mR^{m+n})\otimes\mC_{m+n},1,q(h(D_{m,2})+h(D_{n,2}))))\\
=&H(y_{m+n}^\infty)\\
=&H(y_m^\infty\otimes (\mc{S}(\mR^n)\otimes\mC_n))\circ H(y_n^\infty).
\end{align*}
\end{Lem}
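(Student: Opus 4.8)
The plan is to prove the two equalities separately, both via the compatibility results already established for quasihomomorphisms and Morita contexts, together with diffotopy invariance. Write $A_n := \ssu^n\otimes\mC_n$ and regard all the relevant cycles as $\LC$-Kasparov $(\mC, A_n)$-modules.

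First I would address the equality $H((\fn_b(\mR^{m+n})\otimes\mC_{m+n},1,q(h(D_{m,2})+h(D_{n,2})))) = H(y_{m+n}^\infty)$. Both cycles live over $\fn_b(\mR^{m+n})\otimes\mC_{m+n}$ with the scalar action and a grading coming from the volume element; the only difference is the operator, $q(h(D_{m,2})+h(D_{n,2}))$ versus $q(h(D_{m+n,2}))$. The strategy is to exhibit a $\mcB$-perturbation, or more safely a diffotopy: the straight-line path $t\mapsto q\big(h((1-t)(D_{m,2}+D_{n,2}) + t\,D_{m+n,2})\big)$, suitably interpreted through the symbol $q$ and the function $h$, connects the two operators. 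One must check that this path defines an $\LC$-Kasparov $(\mC,\scy A_{m+n})$-module; the key points are that $1-q(\cdot)^2$ stays Schwartz along the path (which is where the rapid growth of $h$ is used) and that the commutator conditions are preserved. Then Proposition \ref{perturbKM} (the fact that $Qh$ preserves diffotopy) and diffotopy invariance of $H$ give the equality. Alternatively one identifies $h(D_{m,2})+h(D_{n,2})$ with the exterior-product operator built from $h(D_{m,2})$ and $h(D_{n,2})$ acting on $\fn_b(\mR^m)\otimes\fn_b(\mR^n)\cong\fn_b(\mR^{m+n})$, and $\mC_{m+n}\cong\mC_m\hat\otimes\mC_n$, which already makes this cycle literally the exterior product representative.

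For the second equality $H(y_{m+n}^\infty) = H(y_m^\infty\otimes (\ssu^n\otimes\mC_n))\circ H(y_n^\infty)$, the idea is that $y_{m+n}^\infty$, written through the previous step as the exterior product of $y_m^\infty$ and $y_n^\infty$, is a representative of the Kasparov-type product, and this product, at the level of induced maps, decomposes exactly as the composition on the right by the outer-product compatibility of Proposition \ref{doublesplitouterproduct}. Concretely, passing to the associated quasihomomorphisms via $AKM$ and $Qh$, the exterior product of the quasihomomorphism for $y_m^\infty$ (tensored on the right by $\ssu^n\otimes\mC_n$) with that for $y_n^\infty$ induces $H(y_m^\infty\otimes(\ssu^n\otimes\mC_n))\circ H(y_n^\infty)$ — here one uses that $y_n^\infty$ is a $(\mC,A_n)$-cycle so the "outer" slot is a map out of $\mC$, and Proposition \ref{doublesplitouterproduct} in the form displayed just after its proof applies. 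The remaining task is to identify the exterior product of the two quasihomomorphisms with $Qh(y_{m+n}^\infty)$ up to diffotopy; this is again a perturbation-type argument comparing the operator $W$ built from $h(D_{m,2})+h(D_{n,2})$ with the external product of the operators $W$ for the individual cycles, and is precisely the content already packaged in the first equality once one tracks how $W_F$ depends on $F$.

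The main obstacle I expect is the analytic bookkeeping in the diffotopy of the first step: one must verify that along the homotopy interpolating the Dirac-type operators the resulting $\LC$-Kasparov $(\mC,\scy A_{m+n})$-module conditions genuinely hold — in particular that $1-q(h(\cdot))^2$ remains a Schwartz-class-valued function on $[0,1]$ and that the multiplication by the perturbing operator stays continuous and $\scy A_{m+n}$-valued — since the whole point of replacing $q(D_2)$ by $q(h(D_2))$ was to force $1-q(h(D_2))^2$ to be Schwartz, and this must survive the interpolation. Everything else reduces to the already-proven compatibilities (Propositions \ref{quasirules}, \ref{doublesplitouterproduct}, \ref{perturbKM}) and to the bookkeeping of gradings via graded Morita contexts, which the section has already set up.
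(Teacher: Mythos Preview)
Your treatment of the first equality is in the right spirit but the diffotopy you wrote down is vacuous: since $D_{m,2}+D_{n,2}=D_{m+n,2}$ as multiplication operators on $\mR^{m+n}\hookrightarrow\mC_{m+n}$, your path $t\mapsto q\big(h((1-t)(D_{m,2}+D_{n,2})+tD_{m+n,2})\big)$ is constant. What must actually be interpolated are $h(D_{m,2})+h(D_{n,2})$ and $h(D_{m+n,2})$ inside $q$. The paper does this with the explicit path $q\big(e^{t\|y\|^2}h(c_+(x))+e^{t\|x\|^2}h(c_+(y))\big)$, chosen so that the square of the argument grows fast enough in $(x,y)$ uniformly in $t$ to keep $1-q(\cdot)^2$ Schwartz; your unspecified straight-line path would need the same verification. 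The paper also notes that one can bypass the diffotopy entirely by factoring over $K$-theory via Lemma \ref{comparison}, since both cycles have source $\mC$.

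For the second equality your approach has a genuine gap. Proposition \ref{doublesplitouterproduct} only gives a \emph{commutativity} relation between $H((\alpha_1,\bar\alpha_1)\otimes\mcB_2)\circ H(\mcA_1\otimes(\alpha_2,\bar\alpha_2))$ and the other order; it does not produce a single quasihomomorphism representing this composition, let alone identify it with $Qh(y_{m+n}^\infty)$. Your ``remaining task'' --- matching the composition with $Qh(y_{m+n}^\infty)$ by a perturbation of the $W$-operators --- is not a perturbation argument at all: you would first need a product-existence statement and then a way to recognise the product, and the paper has set up no such direct comparison at the level of $W_F$. The paper's route is different and much shorter: since the source is $\mC$, Proposition \ref{twoinduced} identifies $H(y_k^\infty)$ (and $H(y_m^\infty\otimes A_n)$) with multiplication by the corresponding $K$-theory class, and then the associativity of the pairing in Proposition \ref{pairing}, together with $[y_m^\infty]\cup[y_n^\infty]=[y_{m+n}^\infty]$ in $K$-theory, gives the result in one line. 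This is precisely the purpose of the $K$-theory action developed in Section \ref{Kaction}, and your argument does not use it.
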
 
\begin{proof}
The first equality  follows by factoring over $K$-theory or from a diffotopy using the operator
$$q(e^{t||y||^2}\,h(c_+(x))+e^{t||x||^2}\,h(c_+(y))),$$
where $x\in \mR^m$, $y\in\mR^n$.

To see the second equality, note that in $K$-theory the class of $y_{m+n}^\infty$  is the outer product of $y^\infty_m  $ and $y_n^\infty$. Further, $H(y_{m+n}^\infty)$ coincides with the action of the $K$-theory class $[y_{m+n}^\infty]$ by multiplication, as defined in Proposition \ref{pairing}. As the pairing is compatible with outer products (Proposition \ref{pairing}), we get the above statement.
\end{proof}
Note also that the operators $q(h(D_2))$ are equivariant with respect to the $O(n)$-action; this is not true for the operators built up from the one-dimensional ones, which do however have a representative given by an equivariant module according to the above Lemma.

\section{Spectral triples}

We come now to the notion of spectral triple, which is central in noncommutative geometry. Recall that $[T,D]$ is called bounded, where $D$ is a regular self adjoint operator on a Hilbert space, and $T$ a bounded operator, if $T$ preserves the domain of $D$ and $[T,D]$ extends to an element of $\bo$. \note{This is not the same condition as the one used in \cite{BlackK}, Definition 17.11.1, and it is shown in \cite{MR2679393} that the definition from \cite{BlackK} gives a contradiction to elementary properties of $KK$.}{}
\begin{Def}\label{Spectraltriple} A spectral triple is given by an involutive algebra $\mathfrak{A}$, a representation $\phi:\mathfrak{A}\to \bo$ on a Hilbert space $\hsp$ and a selfadjoint (unbounded) operator $D$ on $\hsp$ such that for all $a\in \mathfrak{A}$
$$[\phi(a),D]\in\bo,\phi(a)(1+D^2)^{-1/2}\in \co.$$
If $\hsp$ is graded and $\phi$ is even with respect to the induced grading on $\bo$, and $D$ is odd, then the spectral triple is called even. If $\hsp$ is ungraded, the spectral triple is called an odd spectral triple. 
\end{Def}
\begin{Def}\label{strongsummability}A spectral triple is called $p$-summable if $\phi(a)(1+D^2)^{-1/2}\in \Lp$, and finitely summable if it is $p$-summable for some $p\in\mN$. 
\end{Def}
Because $(1+D^2)^{-1}=R_i(D)R_{-i}(D)$, we see that it is equivalent to demand $a(1+D^2)^{-\nicefrac{1}{2}}\in\mc{L}^p$ or $aR_i(D)\in\mc{L}^p$. In \cite{arXiv:0810.2088v1}, the topology on the algebra $\mathfrak{A}$ is reconstructed from the axioms. We go the other way, defining a spectral triple as an "unbounded $\LC$-Kasparov $(\mcA,\mc{L}^p)$-module" represented on some Hilbert space: 
\begin{Def} A continuous $p$-summable spectral triple is a spectral triple over a locally convex algebra $\mcA$ such that $[D,\phi(a)]$ is continuous if viewed as a function of $a$ with values in the bounded operators, and $\phi(a)(1+D^2)^{-1/2},(1+D^2)^{-1/2}\phi(a)$ are continuous as  functions of $a$ with values in $\mc{L}^p$.
\end{Def}
We will need a consequence of the Baaj-Julg integral formula for the proof of Proposition \ref{transformspectral} below. We set \index{$q(x)$}$q(x):=\frac{x}{\sqrt{1+x^2}}$ for all $x\in\mR$. $q$ is called the Woroniwicz transform - just like the Cayley-transform, it associates a bounded operator to an unbounded one, hence it can be used to define a functional calculus for unbounded operators (\cite{MR1096123}). We have the Baaj-Julg formula from \cite{BaajJulg}
\begin{Lem} Let $D$ be a regular self adjoint unbounded operator on a Hilbert $B$-module $E$. Then as an integral in $\mathbb{B}(E)$ 
$$(1+D^2)^{-1/2}=\frac{1}{\pi}\int_0^\infty(1+t+D^2)^{-1}\frac{dt}{\sqrt{t}}.$$
Hence for every $\xi\in E$
$$D(1+D)^{-1/2}\xi=\frac{1}{\pi}\int_0^\infty D(1+t+D^2)^{-1}\xi\frac{dt}{\sqrt{t}}.$$
\end{Lem}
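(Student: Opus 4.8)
The plan is to deduce the statement from the elementary scalar identity
\[
\lambda^{-1/2}=\frac1\pi\int_0^\infty\frac{1}{t+\lambda}\,\frac{dt}{\sqrt t}\qquad(\lambda>0),
\]
which follows from the substitution $t=\lambda u$ together with $\int_0^\infty u^{-1/2}(1+u)^{-1}\,du=\pi$, by transporting it through a commutative $C^*$-algebra built from $D$. First I would set up the operators. Since $D$ is regular self-adjoint, $D\pm i\mu$ is invertible with bounded adjointable inverse of norm at most $\mu^{-1}$ for each $\mu>0$; hence $1+t+D^2$ is invertible in $\mathbb{B}(E)$ for $t\ge 0$, with
\[
(1+t+D^2)^{-1}=(D+i\sqrt{1+t}\,)^{-1}\bigl((D+i\sqrt{1+t}\,)^{-1}\bigr)^{*}\ge 0,\qquad\lVert(1+t+D^2)^{-1}\rVert\le(1+t)^{-1}.
\]
Writing $A:=(1+D^2)^{-1}\ge 0$ one has $(1+t+D^2)^{-1}=A(1+tA)^{-1}=h_t(A)$ with $h_t(\mu)=\mu/(1+t\mu)$ continuous on $[0,1]$, so every $(1+t+D^2)^{-1}$ --- and, by definition of the positive square root, also $(1+D^2)^{-1/2}$ --- lies in the commutative unital $C^*$-subalgebra $C\subseteq\mathbb{B}(E)$ generated by $A$. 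It therefore suffices to verify the first identity after applying the Gelfand transform on $C$.

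Next I would check convergence and carry out the scalar computation. The map $t\mapsto t^{-1/2}(1+t+D^2)^{-1}$ is norm-continuous on $(0,\infty)$ and dominated in norm by $t^{-1/2}$ near $0$ and by $t^{-3/2}$ near $\infty$, so $\frac1\pi\int_0^\infty t^{-1/2}(1+t+D^2)^{-1}\,dt$ converges absolutely as a Bochner integral, with value in the closed subspace $C$. A character $\chi$ of $C$ with $\chi(A)=\mu\in[0,1]$ is norm-continuous, hence passes under the integral, so $\chi$ applied to the right-hand side equals $\frac1\pi\int_0^\infty t^{-1/2}\mu(1+t\mu)^{-1}\,dt$; the substitution $t=u/\mu$ turns this into $\sqrt\mu\cdot\frac1\pi\int_0^\infty u^{-1/2}(1+u)^{-1}\,du=\sqrt\mu=\chi\bigl((1+D^2)^{-1/2}\bigr)$, the case $\mu=0$ being trivial. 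As this holds for all characters, the first identity follows.

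For the ``Hence'' part I would apply $D$ to the vector form of the first identity. With $q(D)=D(1+D^2)^{-1/2}\in\mathbb{B}(E)$ and $r_t(x):=\sqrt{1+x^2}\,(1+t+x^2)^{-1}\in C_0(\mathbb{R})$ one has $D(1+t+D^2)^{-1}=q(D)\,r_t(D)$, so for $\xi\in E$
\[
\frac1\pi\int_0^N t^{-1/2}D(1+t+D^2)^{-1}\xi\,dt=q(D)\Bigl(\frac1\pi\int_0^N t^{-1/2}r_t(D)\xi\,dt\Bigr).
\]
Since $0\le\frac1\pi\int_0^N t^{-1/2}r_t(x)\,dt\nearrow 1$ pointwise in $x$ while staying bounded by $1$, the bracketed vector tends to $\xi$ as $N\to\infty$, so the left-hand side tends to $q(D)\xi=D(1+D^2)^{-1/2}\xi$; combined with $\frac1\pi\int_0^N t^{-1/2}(1+t+D^2)^{-1}\xi\,dt\to(1+D^2)^{-1/2}\xi$ from the first identity, this gives the claim, the integral being read as the improper limit $\lim_{N\to\infty}\int_0^N$.

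I expect the genuine content --- and the only place the \emph{regularity} of $D$ is used --- to be the structural input ``$D\pm i\mu$ invertible, hence $(1+t+D^2)^{-1}\in\mathbb{B}(E)$ positive and the $C_0(\mathbb{R})$-functional calculus of $D$ available'', which is the standard theory of regular operators on Hilbert modules. A second point to watch is that the second integral is only \emph{improperly} convergent --- the operator norms $\lVert t^{-1/2}D(1+t+D^2)^{-1}\rVert$ behave like $t^{-1}$ and are not integrable at infinity --- so its convergence must be argued as above rather than taken for granted; everything else (absolute convergence of the first Bochner integral, characters and bounded operators commuting with vector-valued integrals) is routine. Alternatively one can bypass $C$ entirely and read both identities off the scalar formula via the continuous functional calculus for regular self-adjoint operators through the bounded transform, e.g.\ the Woronowicz calculus (\cite{MR1096123}).
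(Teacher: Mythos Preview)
The paper does not supply its own proof of this lemma; it simply records the formula and attributes it to Baaj--Julg. So there is nothing to compare against, and the question is only whether your argument stands on its own.

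Your treatment of the first identity is correct and complete: the scalar identity, the absolute norm-convergence of the Bochner integral, and the reduction to characters of the commutative $C^*$-algebra generated by $(1+D^2)^{-1}$ all work exactly as you say.

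For the ``Hence'' part there is one genuine gap. You assert that since $f_N(x):=\frac{1}{\pi}\int_0^N t^{-1/2}r_t(x)\,dt\nearrow 1$ pointwise and stays bounded by $1$, the vector $f_N(D)\xi$ tends to $\xi$. On a Hilbert \emph{space} this is immediate from the spectral theorem via dominated convergence, but on a general Hilbert module there is no spectral measure, and pointwise bounded convergence of $f_N$ does \emph{not} by itself force $f_N(D)\to 1$ strongly. What makes it true here is precisely regularity, used a second time: the $C_0(\mathbb{R})$-functional calculus of a regular self-adjoint operator is nondegenerate (equivalently, $\mathrm{Dom}(D)$ is dense, equivalently $(1+D^2)^{-1}E$ is dense). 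Concretely, for $\xi\in\mathrm{Dom}(D)$ one has $D(1+t+D^2)^{-1}\xi=(1+t+D^2)^{-1}D\xi$, so the partial integral equals $\frac{1}{\pi}\int_0^N t^{-1/2}(1+t+D^2)^{-1}(D\xi)\,dt\to(1+D^2)^{-1/2}D\xi=q(D)\xi$ by the first identity, with absolute convergence; since the partial integrals are uniformly bounded by $1$ in operator norm and $\mathrm{Dom}(D)$ is dense, a $3\varepsilon$ argument extends this to all $\xi\in E$. So your claim is correct, but the justification you give is not, and your remark that regularity enters \emph{only} through the invertibility of $D\pm i\mu$ undersells its role: the density of the domain is exactly what rescues this step.
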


\begin{Lem}\label{strongintegral} Let $E$ be a Hilbert module, $D$ a regular self-adjoint operator, $a\in \mathbb{B}(E)$. If $[D,a](1+D^2)^{\frac{\ee-1}{2}}$ is bounded for some $\ee>0$, then
\begin{align*} [q(D),a]=\frac{1}{\pi}&(\int_0^\infty (1+t)(1+t+D^2)^{-1}[D,a](1+t+D^2)^{-1}\frac{dt}{\sqrt{t}}\\
+&\int_0^\infty D(1+t+D^2)^{-1}[a,D]D(1+t+D^2)^{-1}\frac{dt}{\sqrt{t}}),
\end{align*}
as a uniformly converging integral of bounded operators.
\end{Lem}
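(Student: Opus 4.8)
The plan is to start from the Baaj--Julg integral formula for $(1+D^2)^{-1/2}$ recalled in the previous Lemma and differentiate through the integral, i.e.\ compute the commutator $[q(D),a]$ by writing $q(D)=D(1+D^2)^{-1/2}$ and inserting the integral representation. First I would write $q(D)a-aq(D)=\frac{1}{\pi}\int_0^\infty\bigl(D(1+t+D^2)^{-1}a-aD(1+t+D^2)^{-1}\bigr)\frac{dt}{\sqrt t}$, valid as a uniformly convergent integral once we know the integrand decays like $t^{-3/2}$ at infinity; this reduces everything to the resolvent identity $[R_t,a]=R_t[a,1+t+D^2]R_t=R_t\bigl([a,D]D+D[a,D]\bigr)R_t$ with $R_t:=(1+t+D^2)^{-1}$.

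The key algebraic step is to expand $[D(1+t+D^2)^{-1},a]$ using $D R_t$ in place of $R_t$: one gets $[DR_t,a]=D[R_t,a]+[D,a]R_t = D R_t([a,D]D+D[a,D])R_t+[D,a]R_t$. Now I would regroup $[D,a]R_t$ against the $D R_t[a,D]DR_t$ term: write $[D,a]R_t=[D,a]R_t - DR_t[D,a]DR_t + DR_t[D,a]DR_t$ and observe $[D,a]R_t - DR_t D R_t \cdot (\text{via }R_t D^2 = 1-(1+t)R_t)$, i.e.\ use $DR_tDR_t = (D^2R_t)R_t = (1-(1+t)R_t)R_t = R_t-(1+t)R_t^2$ to collapse the $D$-free part into $(1+t)R_t[D,a]R_t$. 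Carrying this bookkeeping through, the two surviving families of terms are exactly $\frac{1}{\pi}\int_0^\infty(1+t)R_t[D,a]R_t\frac{dt}{\sqrt t}$ and $\frac{1}{\pi}\int_0^\infty DR_t[a,D]DR_t\frac{dt}{\sqrt t}$, which is the claimed identity (the sign in the second integrand is $[a,D]=-[D,a]$, matching the statement).

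For the convergence claims I would argue as follows. Both $R_t$ and $DR_t$ are bounded with $\|R_t\|\le (1+t)^{-1}$ and $\|DR_t\|\le \tfrac12(1+t)^{-1/2}$ by the functional calculus for the regular self-adjoint operator $D$ (spectral theorem on Hilbert modules, or fibrewise). For the first integrand, $\|(1+t)R_t[D,a]R_t\|\le (1+t)\cdot(1+t)^{-1}\cdot\|[D,a]\|\cdot(1+t)^{-1}=\|[D,a]\|(1+t)^{-1}$, which is not quite integrable against $t^{-1/2}$ near infinity on its own — this is where the hypothesis $[D,a](1+D^2)^{(\ee-1)/2}$ bounded enters: rewrite $(1+t)R_t[D,a]R_t = (1+t)R_t\bigl([D,a](1+D^2)^{(\ee-1)/2}\bigr)(1+D^2)^{(1-\ee)/2}R_t$ and bound $\|(1+D^2)^{(1-\ee)/2}R_t\|\le (1+t)^{(1-\ee)/2-1}$ by functional calculus, giving an overall $(1+t)^{-1/2-\ee/2}$ bound, integrable against $dt/\sqrt t$ at infinity and harmless at $t=0$. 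The second integrand is treated the same way, distributing one power of $D$ to each resolvent and using $[a,D](1+D^2)^{(\ee-1)/2}$ bounded together with $\|DR_t\|\le\tfrac12(1+t)^{-1/2}$ and $\|(1+D^2)^{(1-\ee)/2}DR_t\|\le C(1+t)^{-\ee/2}$. Uniform (norm) convergence of the truncated integrals then follows from the dominated tail estimate, and since $\mathbb{B}(E)$ is complete the limit is a well-defined bounded operator; the identity passes to the limit because it holds for every truncation up to the telescoped resolvent manipulations above.

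The main obstacle I expect is the algebraic regrouping in the second paragraph: the naive expansion of $[DR_t,a]$ produces a $D$-free cross term $[D,a]R_t$ that must be absorbed correctly into the $(1+t)R_t[D,a]R_t$ family using $D^2R_t = 1-(1+t)R_t$, and one has to be careful that $a$ preserves the domain of $D$ (part of the standing hypothesis that $[D,a]$ is densely defined and bounded) so that all the formal commutator identities $[R_t,a]=R_t([a,D]D+D[a,D])R_t$ are legitimate as identities of bounded operators, not merely on a core. Once that identity is established on a dense domain and both sides are checked to be bounded via the estimates above, it extends to all of $E$ by continuity.
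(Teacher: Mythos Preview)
Your strategy is the same as the paper's: compute $[DR_t,a]$ for $R_t=(1+t+D^2)^{-1}$, obtain the two-term identity $(1+t)R_t[D,a]R_t+DR_t[a,D]DR_t$, and then bound each piece using the hypothesis and functional calculus. Your convergence estimates are essentially those of the paper (the paper only writes out the first term and says ``similarly'' for the second), and your domain remarks match the paper's standing interpretation of the hypothesis.

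The only point where you diverge is the algebraic derivation of the identity, and your exposition of that step is garbled. After the Leibniz/resolvent expansion you have
\[
[DR_t,a]=DR_t[a,D]DR_t+D R_t D\,[a,D]R_t+[D,a]R_t,
\]
and the term to regroup against $[D,a]R_t$ is the \emph{middle} one $DR_tD[a,D]R_t=D^2R_t[a,D]R_t$, not $DR_t[a,D]DR_t$ as you wrote; the identity $DR_tDR_t=R_t-(1+t)R_t^2$ you invoke plays no role. The correct (one-line) regrouping is
\[
D^2R_t[a,D]R_t+[D,a]R_t=(1-(1+t)R_t)[a,D]R_t+[D,a]R_t=(1+t)R_t[D,a]R_t,
\]
which gives exactly the claimed decomposition. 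The paper reaches the same identity by a slightly cleaner route: it inserts $T_t^{-1}T_t$ and $T_tT_t^{-1}$ on either side of $a$ and expands $T_t^{-1}=1+t+D^2$, avoiding any regrouping. Either route is fine; just tidy up your second paragraph accordingly.
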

Here the hypothesis that $[D,a](1+D^2)^{\frac{\ee-1}{2}}$ is bounded means that $a$ preserves the domain of $D$ and that $[D,a](1+D^2)^{\frac{\ee-1}{2}}$ is an adjointable operator  on $\Dom((1+D^2)^{\ee/2})$.
\begin{proof}
Set $T_t:=(1+t+D^2)^{-1}$, let $a\in A$ be homogeneous. Then we have
\begin{align*}
[DT_t,a]=& DT_t a-(-1)^{\partial a\partial D}aDT_t \\
=& DT_ta T_t^{-1}T_t-(-1)^{\partial a\partial D}T_t^{-1}T_t aDT_t \\
=& (1+t)( DT_taT_t-(-1)^{\partial a\partial D}T_taDT_t)\\
+& DT_taD^2T_t-(-1)^{\partial a\partial D} D^2T_taDT_t)\\
=& (1+t) (T_t[D,a]T_t)+( DT_t[a,D]DT_t).
\end{align*}
Estimating the norms, we see that for every $t$
\begin{align*} ||T_t[D,a]T_t||\leq||T_t||\,||[D,a](1+D^2)^{\frac{\ee-1}{2}}||\,||(1+D^2)^{\frac{1-\ee}{2}}T_t||\leq C\,(1+t)^{-\frac{1+\ee}{2}},
\end{align*}
for a positive constant $C$, and similarly for the second term. Thus
$$[q(D),a]=\frac{1}{\pi}\int_0^\infty ((1+t)(T_t[D,a]T_t)+(DT_t[a,D]DT_t))\frac{dt}{\sqrt{t}}$$
as an integral with values in the bounded operators.
\end{proof}

Note that a linear function $\phi:\mcA\to\mc{L}^p$  is continuous iff $a\mapsto \phi(a)^*\phi(a)$ is continuous into $\mc{L}^{p/2}$ at $0$.
\note{For this, note that if we denote by $\lambda_i(T)$ the sequence of eigenvalues of a positive operator arranged in decreasing order, then by definition 
\begin{align*}\mu_n(A):=\lambda_n(|A|)=\sqrt{\lambda_n(A^*A)},\; ||A||_p:=(\sum_n{\mu_n(A)}^p)^{\nicefrac{1}{p}}\\
\Rightarrow ||A^*A||_{\frac{p}{2}}=(\sum_n \lambda_n (A^*A)^{\nicefrac{p}{2}})^{\nicefrac{2}{p}}=(\sum_n \lambda_n(|A|)^p)^{\nicefrac{2}{p}}=||A||_p^2.\end{align*}
Now assume $\phi^*\phi$ is continuous, $\phi$ linear, and $a_n$ a sequence in $A$ converging to zero. Then 
$$||\phi(a_n)||_p^2=||\phi^*(a_n)\phi(a_n)||_{\frac{p}{2}}\to 0.$$
In other words, if $p$ is a continuous seminorm on $A$, and $||\phi^*(a)\phi(a)||_{\frac{p}{2}}\leq C p(a)$ for all $a$, then
$||\phi(a)||_{p}=||\phi^*(a)\phi(a)||^{\frac{1}{2}}_{\frac{p}{2}}\leq \sqrt{C}\sqrt{p(a)}$, and $\sqrt{p}$ is a continuous seminorm on $A$ by continuity of the square root.
}{} 
Hence for a continuous spectral triple $a\mapsto (1+D^2)^{-\nicefrac{1}{4}}\phi(a)$ is also continuous (with values in $\mc{L}^{2p}$). We have:
\begin{Prop}\label{transformspectral} If $(\hsp,\phi,D)$ is a continuous $p$-summable spectral triple over a locally convex algebra $\mcA$ such that the multiplication $\mcA\times\mcA\to \mcA$ is surjective and open, then $(\bo,\phi,q(D))$ is an $\LC$-Kasparov $(\mcA,\mc{L}^{2p})$-module.
\end{Prop}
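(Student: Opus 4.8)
The plan is to verify the four conditions in the definition of an $\LC$-Kasparov module for the data $\hat\mcB:=\bo$, $\mcB:=\mc{L}^{2p}$, $F:=q(D)$, with $\ee$ the grading operator of $\hsp$; if the triple is odd we first pass to its standard even double on $\hsp\oplus\hsp$, which affects neither the summability nor the continuity hypotheses. Since $q$ is an odd function and $D$ is odd, $q(D)$ anticommutes with $\ee$; $\ee^2=1$ and $[\phi(a),\ee]=0$ because $\phi$ is even; and by functional calculus $1-q(D)^2=(1+D^2)^{-1}$. Conditions (1) and (2) are immediate: $\mc{L}^{2p}$ is an ideal of $\bo$ on which left and right multiplication by the bounded operators $q(D)$, $\ee$ and $\phi(a)$ is continuous (using for (2) the reduction noted after the definition). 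For (3), factor $\phi(a)(1-q(D)^2)=\big(\phi(a)(1+D^2)^{-1/2}\big)(1+D^2)^{-1/2}$; by hypothesis $a\mapsto\phi(a)(1+D^2)^{-1/2}$ is continuous into $\mc{L}^p$ and $(1+D^2)^{-1/2}$ is bounded, so $a\mapsto\phi(a)(1+D^2)^{-1}$ is continuous into $\mc{L}^p\hookrightarrow\mc{L}^{2p}$, and symmetrically for $(1+D^2)^{-1}\phi(a)$.

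The substance of the proof is condition (4): that $a\mapsto[q(D),\phi(a)]$ is $\mc{L}^{2p}$-valued and continuous. The naive rearrangement $[q(D),\phi(a)]=D[(1+D^2)^{-1/2},\phi(a)]+[D,\phi(a)](1+D^2)^{-1/2}$ is useless, because of the unbounded $D$ in front and because $(1+D^2)^{-1/2}$ alone need not be $2p$-summable. I would instead use the Baaj-Julg representation of Lemma \ref{strongintegral} --- legitimate here since $[D,\phi(a)]$ is bounded, so its hypothesis is satisfied with exponent $1$ --- which, writing $T_t:=(1+t+D^2)^{-1}$, reads
\[
  [q(D),\phi(a)]=\frac{1}{\pi}\int_0^\infty\Big((1+t)\,T_t[D,\phi(a)]T_t+D T_t[\phi(a),D]D T_t\Big)\frac{dt}{\sqrt t}
\]
as a norm-convergent integral of bounded operators. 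Now the surjectivity and openness of the multiplication $\mcA\times\mcA\to\mcA$ let me write every $a$ as a convergent sum $a=\sum_i b_i c_i$ with the $b_i$, $c_i$ controlled by the continuous seminorms of $a$; then $[D,\phi(a)]=\sum_i\big([D,\phi(b_i)]\phi(c_i)+\phi(b_i)[D,\phi(c_i)]\big)$, and substituting this into the two integrands produces, in each summand, a factor $\phi(b_i)$ or $\phi(c_i)$ sitting next to a resolvent $T_t$. Using that $\phi(c)(1+D^2)^{-1/4}$ and $(1+D^2)^{-1/4}\phi(b)$ lie in $\mc{L}^{2p}$ --- the fact recorded just before the statement, together with its mirror image, which follows from the continuity of the involution --- and the elementary spectral estimates $\|(1+D^2)^{1/4}T_t\|\le C(1+t)^{-3/4}$ and $\|(1+D^2)^{1/4}D T_t\|\le C(1+t)^{-1/4}$, one bounds the $\mc{L}^{2p}$-norm of each integrand by $C(1+t)^{-3/4}$ times a product of seminorms of the $b_i$, $c_i$, hence of $a$. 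Since $\int_0^\infty(1+t)^{-3/4}t^{-1/2}\,dt<\infty$, each integral converges absolutely in $\mc{L}^{2p}$; its value agrees with the bounded-operator integral of Lemma \ref{strongintegral}, so $[q(D),\phi(a)]\in\mc{L}^{2p}$, and the bound being linear in $a$ yields continuity.

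The main obstacle is precisely this last step: making the Baaj-Julg integral land in $\mc{L}^{2p}$. What has to be got right is the interplay between the factorization $a=\sum_i b_i c_i$ --- which is exactly what creates the required ``$\phi$ next to a resolvent'' pattern and is the reason the surjectivity/openness hypothesis is imposed --- and the choice of the fractional power $1/4$ of $1+D^2$, chosen so that the leftover resolvent factors decay like $(1+t)^{-3/4}$, just fast enough to be integrable against $dt/\sqrt t$. Once these are balanced, everything else is routine bookkeeping; the even/odd reduction and the passage between $(1+D^2)^{-1/4}$ on the left and on the right are the only places where the involutive structure intervenes.
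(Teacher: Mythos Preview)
Your argument is correct and follows essentially the same route as the paper: the Baaj--Julg integral from Lemma~\ref{strongintegral}, the factorisation of $a$ as a product to place a $\phi$ next to a resolvent, extraction of a factor $(1+D^2)^{-1/4}\in\mc{L}^{2p}$, and the $(1+t)^{-3/4}$ decay that makes the integral converge in $\mc{L}^{2p}$.

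One point of presentation is worth noting. You phrase the use of the hypothesis as ``write every $a$ as a convergent sum $a=\sum_i b_i c_i$ with the $b_i,c_i$ controlled by seminorms of $a$''; this is slightly more than what ``multiplication is surjective and open'' literally gives you (and the infinite sum is unnecessary). The paper's formulation is cleaner: openness and surjectivity mean that $\mcA$ carries the quotient topology of $\mcA\times\mcA$ under multiplication, so to prove continuity of $a\mapsto[q(D),\phi(a)]$ it suffices to show that the bilinear map $(b,c)\mapsto[q(D),\phi(b)\phi(c)]$ is continuous. One then expands $[D,\phi(b)\phi(c)]$ by Leibniz inside the integrand and estimates exactly as you do, invoking dominated convergence for the joint continuity in $(b,c)$. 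This avoids any appeal to a factorisation-with-control statement and uses only the stated hypothesis.
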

\begin{proof}
We set $T_t:=(1+t+D^2)^{-1}$. As $\mcA$ carries the identification topology for the product, it suffices to show that $(a,b)\mapsto [q(D),ab]$ is continuous on $\mcA\times\mcA$. Note first that for fixed $t\in\mR_{\geq 0}$
$$a\mapsto T_t^{\nicefrac{1}{4}}a= T_t^{\nicefrac{1}{4}}(1+D^2)^{\nicefrac{1}{4}}(1+D^2)^{-\nicefrac{1}{4}}a$$ is a product of multiplier of $\mc{L}^{2p}$ with a continuous function $\mcA\to \mc{L}^{2p}$, hence continuous. Now we use Lemma \ref{strongintegral} to develop 
\begin{align*}
[q(D),ab]=&\frac{1}{\pi}(\int (1+t) T_ta[D,b]T_t \frac{dt}{\sqrt{t}}+\int (1+t) T_t[D,a]bT_t\frac{dt}{\sqrt{t}}\\
+&\int DT_ta[D,b]DT_t\frac{dt}{\sqrt{t}}+\int DT_t[D,a]bDT_t\frac{dt}{\sqrt{t}}).
\end{align*}
These are norm-convergent integrals in $\mc{L}^{2p}$; for example:
\begin{align*} 
&    ||DT_ta [D,b]DT_t||_{\mc{L}^{2p}}\\
\leq & ||DT_t^{1/2}||\,||T_t^{1/4}||\,||T_t^{1/4}a||_{\mc{L}^{2p}}\,||[D,b]||\,||DT_t^{1/2}||\,||T_t^{1/2}||\\
\leq & \frac{C}{(1+t)^{3/4}} 
\end{align*}
for a constant $C$. Furthermore, the integrands are continuous functions of $a$ and $b$, hence the result follows from dominated convergence.

\end{proof}

\subsection{Example: The Dirac element}\label{Diracel}
We assume again that the dimension $n$ is even in order that the grading on the Clifford algebras be inner.

Recall again from \cite{KaspOp} or \cite{kkT} that $x_n$ denotes the $C^*$-Kasparov $(S^n\otimes\mC_n,\mC)$-module defined  by letting $S^n\otimes\mC_n$ act on $L^2$-forms on $\mR^n$ and with operator the Dirac operator. The grading on $S^n\otimes\mC_n$  is again inner; and we may therefore  pass again to to a $C^*$-Kasparov $(S^n\otimes \mC_n,\mC)$-module $x_n'$, where $S^n\otimes\mC_n$ is viewed as \underline{trivially} graded. Denote $\iota:\ssu^n\otimes\mC_n\to S^n\otimes \mC_n$ the inclusion.
\begin{Prop}\label{unitalizetoprod} The restriction $x_n^\infty:=\iota^*(x_n')$\index{$x_n^\infty$}, to $\ssu^n\otimes\mC_n$, of  $x_n'$  is an $\LC$-Kasparov $(\ssu^n\otimes\mC_n,\mc{L}^{2(n+1)})$-module.
\end{Prop}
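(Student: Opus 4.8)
The plan is to unwind the definition of an $\LC$-Kasparov module and check, item by item, that the restricted data $(\fn_b(\mR^n)\otimes\mC_n,\ \phi\circ\iota,\ D_n)$ — where $D_n$ is the Dirac operator, $\phi$ the action of $S^n\otimes\mC_n$ on $L^2$-forms, and $\iota:\ssu^n\otimes\mC_n\to S^n\otimes\mC_n$ the inclusion — satisfies all the required algebraic identities and, crucially, the continuity and $\mc{L}^{2(n+1)}$-valuedness conditions. The purely algebraic relations ($\ee^2=1$, $F\ee=-\ee F$, $[\phi(a),\ee]=0$) are inherited verbatim from the $C^*$-Kasparov module $x_n'$, since restricting the action along $\iota$ changes nothing about the operators $\ee$ and $F=D_n$; the grading $\ee$ is still given by (left multiplication with) the volume element, which is what makes $x_n'$, as opposed to $x_n$, the object we want to work with. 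So the content is entirely in the four numbered continuity/summability conditions of the definition of $\LC$-Kasparov module.

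First I would treat the summability condition, which is the heart of the matter: for $a\in\ssu^n\otimes\mC_n$ one must show $\phi(a)(1-D_n^2)$ — or rather, matching the unbounded picture, $\phi(a)(1+D_n^2)^{-1/2}$ and its cousins — lie in $\mc{L}^{2(n+1)}$ and depend continuously on $a$. The standard estimate is that $(1+D_n^2)^{-1/2}$ has the same singular-value asymptotics as $\langle\xi\rangle^{-1}$ on $L^2(\mR^n)$ twisted by a finite-dimensional Clifford bundle, so multiplication by a Schwartz function $g$ gives an operator whose singular values decay like $\mu_k\sim k^{-1/n}$ (heuristically, by the Cwikel–type estimate $\mu_k(M_g(1+D_n^2)^{-1/2})\lesssim \|g\|_{L^{2n}}\, k^{-1/n}$ for $g$ in the appropriate Lorentz space), which is in $\mc{L}^p$ exactly for $p>n$; taking $p=2(n+1)$ is safely inside that range, with room to spare for the continuity estimate. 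For continuity I would use the remark recorded in the excerpt just before Proposition~\ref{transformspectral}: a linear map $\psi:\mcA\to\mc{L}^p$ is continuous iff $a\mapsto\psi(a)^*\psi(a)$ is continuous into $\mc{L}^{p/2}$ at $0$, which reduces the $\mc{L}^{2(n+1)}$-estimate to an $\mc{L}^{n+1}$-estimate for $\bar a(1+D_n^2)^{-1/2}\phi(\cdot)^*\phi(\cdot)(1+D_n^2)^{-1/2}a$-type expressions controlled by a Schwartz seminorm of $a$. The commutator condition $[\phi(a),D_n]$ being bounded (and continuous in $a$) is the easy classical fact that the Dirac operator has bounded commutators with smooth multipliers, the bound being essentially a first Schwartz seminorm of $a$; this is what one already uses to set up $x_n$ in \cite{KaspOp}.

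Concretely the ordered steps are: (i) record that $x_n'$ is a genuine $C^*$-Kasparov $(S^n\otimes\mC_n,\mC)$-module, so all algebraic identities hold on the nose after restriction; (ii) verify condition (1) of the $\LC$-Kasparov-module definition — multiplication by $\ee$ and $F=D_n$ maps $\mc{L}^{2(n+1)}$ to itself (here $\ee$ is bounded and $D_n$ is a multiplier of every Schatten ideal, in the sense of left/right multiplication preserving $\mc{L}^p$ up to the relevant domain bookkeeping, which on the level of the ideal is automatic); (iii) verify condition (2), continuity of $a\mapsto\phi(a)b$ and $b\phi(a)$, which is immediate from boundedness of $\phi$ and the Banach-algebra structure; (iv) verify condition (3) via the Cwikel/Birman–Schwinger estimate plus the $\psi^*\psi$-trick for continuity, obtaining membership in $\mc{L}^{n+1}\subset\mc{L}^{2(n+1)}$ with the square-root of a Schwartz seminorm as the modulus of continuity; (v) verify condition (4), boundedness and continuity of $a\mapsto[\phi(a),D_n]$, from the classical principal-symbol computation. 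The main obstacle is step (iv): one must pin down exactly which Schwartz seminorm of $a$ controls $\|\phi(a)(1+D_n^2)^{-1/2}\|_{2(n+1)}$ and carry the argument through for the twisted (Clifford-coefficient) Dirac operator rather than the scalar Laplacian, being careful about the finite rank of $\mC_n$ contributing a harmless constant factor; once the Cwikel estimate is in hand with the right exponent $2(n+1)>n$, everything else is bookkeeping that parallels the construction of the $C^*$ element $x_n$.
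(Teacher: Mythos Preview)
There is a genuine gap in your step~(v), and it stems from a confusion running through the whole outline: you repeatedly write ``$F=D_n$'', but the operator in the $\LC$-Kasparov module is $F=q(D_n)=D_n(1+D_n^2)^{-1/2}$, not the unbounded Dirac operator itself. This matters most in condition~(4). What you propose to check in~(v) is that $a\mapsto[\phi(a),D_n]$ is bounded and continuous; that is the spectral-triple condition, not the $\LC$-Kasparov-module condition. What is actually required is that $a\mapsto[\phi(a),q(D_n)]$ is $\mc{L}^{2(n+1)}$-valued and continuous. Boundedness of $[\phi(a),D_n]$ does not automatically yield Schatten-class membership of $[\phi(a),q(D_n)]$; this is a genuinely analytic step and is precisely why the exponent doubles from $n+1$ to $2(n+1)$.

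The paper does not verify the four conditions directly. Instead it reduces everything to Proposition~\ref{transformspectral}: one checks that $(\hsp,\phi,D_n)$ is a \emph{continuous $(n+1)$-summable spectral triple} over $\ssu^n\otimes\mC_n$ (i.e.\ $[\phi(a),D_n]$ bounded and continuous in $a$, and $\phi(a)(1+D_n^2)^{-1/2}\in\mc{L}^{n+1}$ continuously in $a$), and then Proposition~\ref{transformspectral}, via the Baaj--Julg integral formula (Lemma~\ref{strongintegral}), produces the $\LC$-Kasparov $(\mcA,\mc{L}^{2(n+1)})$-module with $F=q(D_n)$. The doubling of the exponent comes from writing $[q(D),ab]$ as an integral of terms like $T_t^{1/4}a\cdot[D,b]\cdot T_t^{\ldots}$ and using that $(1+D^2)^{-1/4}\phi(a)\in\mc{L}^{2p}$ when $\phi(a)(1+D^2)^{-1/2}\in\mc{L}^p$. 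Your outline skips this mechanism entirely.

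For the summability input itself, your Cwikel-type estimate would work, but the paper argues differently: for compactly supported $f$ one has $f(1+D^2)^{-1}f^*\in\mc{L}^{(n+1)/2}$ because it is a pseudodifferential operator of order $-2$ on a compact manifold, hence $f(1+D^2)^{-1/2}\in\mc{L}^{n+1}$; then one passes to arbitrary Schwartz $f$ by a translation-invariant partition of unity, using that $D$ commutes with translations, and bounds $\sum_\alpha\|f|_{K_\alpha}\|_\infty$ by a Schwartz seminorm. Either route gives the $\mc{L}^{n+1}$ estimate, but without invoking Proposition~\ref{transformspectral} (or reproving its content), your argument does not reach the $\LC$-Kasparov module.
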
 
\begin{proof} This follows from Proposition \ref{transformspectral} once we have checked the hypotheses. That the commutators are bounded holds even for all smooth functions with bounded derivatives, and continuity of $f\mapsto[f,D]$ is obvious. It suffices to show $f(1+D^2)^{-1/2}\in \mc{L}^{(n,+)}$ for scalar $f$; for every function of compact support, the operator $f (1+D^2)^{-1}f^*$ is an element of $\mathcal{L}^{(n+1)/2}$, being a pseudodifferential operator of order $-1$ on a compact manifold (see for example \cite{Connesactionfunctional}, Theorem 1), hence $f(1+D^2)^{-1/2}\in \mathcal{L}^{n+1}$. In order to extend this to $\mc{S}$, let $h$ be a smooth function of compact support $K$ that is constant $1$ on the unit-cube; for $\alpha\in\mZ^n$, we denote by $t_\alpha$ the operator of translation by $\alpha$ - an isometry when restricted to $L^2$ - and set $K_\alpha:=t_\alpha(K)$. Define $H:=\sum_\alpha t_\alpha h$, and $g:=h/H$; then $\sum_\alpha t_\alpha(g)=1$. As the Dirac operator commutes with translations, for every smooth scalar $f$ and $\alpha\in\mZ^n$:
\begin{align*}
&||(1+D^2)^{-1/2} t_\alpha(g) f||_{n,+}\leq|| (1+D^2)^{-1/2} \circ t_\alpha\circ g\circ t_\alpha||_{n,+}\,||P_\alpha f||_{op}\\
=&|| (1+D^2)^{-1/2} g||_{n,+}\,\esssup(P_\alpha f)=|| (1+D^2)^{-1/2} g||_{n,+}\,||f\big|_{K_\alpha}||_\infty,
\end{align*}
where $P_\alpha=\chi(K_\alpha)$ is the projection determined by the support of $t_\alpha(g)$.
Now if $f$ is rapidly decreasing, then it is easily seen that 
$$\sum_\alpha||f\big|_{K_\alpha}||_\infty\leq C ||(1+x)^k f||_\infty$$
for an appropriate $k$ and a constant $C$, \note{ indeed: $$\sum_{\alpha}||f\big|_{K_\alpha}||_\infty\leq \sum_{\alpha}||(1+x)^{-k}\big|_{K_\alpha}||_{\infty}\,||((1+x)^k f)\big|_{K_\alpha}||_\infty\leq ||(1+x)^k f||_\infty\sum_\alpha||1+\alpha||^{-k}$$
where the last inequality \textit{should} follow by calculating the minimum of $K_\alpha$,}  and therefore 
$$||(1+D^2)^{-1}f||_{n,+}\leq C||(1+D^2)^{-1/2}g||_{n,+}||(1+x)^k f||_\infty,$$
which shows continuity of $f\mapsto f(1+D^2)^{-1/2}$.

\end{proof}
Alternatively, one may use the integral kernels of the operators and the following fact of independent interest from \cite{Simontrace}: If $K(x,y)=f(x)g(x-y)$, then $||T_K||_p\leq||f||_p||g||_p$, where $T_K$ denotes the operator with kernel $K$.

\section{Products of Kasparov modules and induced morphisms}
For brevity, we will call two quasihomomorphisms  $(\alpha,\bar\alpha)$ and $(\beta,\bar\beta)$  composable, if the target of the first is the domain of the second. We say they have a product for a given class of split exact functors if there is a quasihomomorphism $(\gamma,\bar\gamma)$  such that
$$H(\beta,\bar\beta)\circ H(\alpha,\bar\alpha)=H(\gamma,\bar\gamma)$$
for every functor in the class.

We use the analogous wordings for abstract Kasparov modules and $\LC$-Kasparov modules. 
Recall that for a given quasihomomorphism $(\alpha,\bar\alpha):\mc{A}\rrarrow \hat{\mc{B}}\trianglerighteq \mc{B}$, the algebra $\mc{D}_\alpha$ is defined as the topological 
vector space $\mc{A}\oplus \mc{B}$ with a twisted multiplication, and that it comes equipped with a canonical inclusion $\iota$ of $\mc{B}$ into $\mc{D}_\alpha$.
\begin{Prop}\label{productandextension} Let $(\hat\mcB,\phi_1,F_1)$ be a Kasparov $(\mathcal{A},\mc{B})$-module, $(\hat\mcC,\phi_2,F_2)$ a Kasparov $(\mc{B},\mc{C})$-module, $(\alpha,\bar\alpha):=Qh(\hat\mcB,\phi_1,F_1)$, $\mc{D}_\alpha$ and $\iota$ the to $\alpha$ associated algebra and inclusion. Let $H$ be a split exact functor. If there is a Kasparov $(D_\alpha,C)$-module $(\hat \mcC',\phi_2',F_2')$ such that $H(\hat\mcC,\phi_2,F_2)=H(\hat\mcC',\phi_2',F_2')\circ H(\iota)$, then 
$$H(\hat\mcC,\phi_2,F_2)\circ H(\hat\mcB,\phi_1,F_1)=H(\alpha^*(\hat \mcC,\phi_2,F_2))-H(\bar\alpha^*(\hat \mcC,\phi_2,F_2)).$$
In particular, $(\hat\mcB,\phi_1,F_1)$ and $(\hat\mcC,\phi_2,F_2)$ then have a product with respect to the class of split exact, $M_2$-stable functors.
\end{Prop}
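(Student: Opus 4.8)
The plan is to unwind the definition of the map induced by a Kasparov module in terms of the quasihomomorphism $(\alpha,\bar\alpha)$, the split extension $0\to\mcB\xrightarrow{\iota}D_\alpha\xrightarrow{\pi}\mcA\to 0$ attached to it by Lemma \ref{newalgebra}, and its two splits $\alpha'=\id_\mcA\oplus 0$ and $\bar\alpha'=\id_\mcA\oplus(\bar\alpha-\alpha)$. By Definition \ref{quasiinduced} (applied to $Qh(AKM(\hat\mcB,\phi_1,F_1))$) we have $H(\hat\mcB,\phi_1,F_1)=H(\iota)^{-1}\circ\big(H(\alpha')-H(\bar\alpha')\big)$. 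Since $H$ is split exact, $H(\iota)$ is injective with image $\Ker H(\pi)$, and $H(\alpha')-H(\bar\alpha')$ lies in that kernel because $\pi\circ\alpha'=\pi\circ\bar\alpha'=\id_\mcA$; hence $H(\iota)\circ H(\hat\mcB,\phi_1,F_1)=H(\alpha')-H(\bar\alpha')$. Feeding in the hypothesis $H(\hat\mcC,\phi_2,F_2)=H(\hat\mcC',\phi_2',F_2')\circ H(\iota)$ gives
\[
H(\hat\mcC,\phi_2,F_2)\circ H(\hat\mcB,\phi_1,F_1)=H(\hat\mcC',\phi_2',F_2')\circ H(\alpha')-H(\hat\mcC',\phi_2',F_2')\circ H(\bar\alpha').
\]

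Next I would rewrite the two summands as the modules pulled back along $\alpha$ and $\bar\alpha$. The point is that $W_F$ does not involve the representation, so for any homomorphism $\psi:\mcA\to D_\alpha$ the identity $Qh\big(\psi^*(\hat\mcC',\phi_2',F_2')\big)=Qh(\hat\mcC',\phi_2',F_2')\circ\psi$ holds on the nose. Applying Proposition \ref{quasirules}\,\ref{quasiprecompose} to $\psi=\alpha'$ and $\psi=\bar\alpha'$ therefore yields $H(\hat\mcC',\phi_2',F_2')\circ H(\alpha')=H\big((\alpha')^*(\hat\mcC',\phi_2',F_2')\big)$ and likewise for $\bar\alpha'$. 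The Kasparov $(\mcA,\mcC)$-module $(\alpha')^*(\hat\mcC',\phi_2',F_2')$ is by definition what is denoted $\alpha^*(\hat\mcC,\phi_2,F_2)$ — $\alpha'$ being the canonical lift of $\alpha$ to $D_\alpha$, and the restriction of $(\hat\mcC',\phi_2',F_2')$ along $\iota$ representing $(\hat\mcC,\phi_2,F_2)$ — and similarly for $\bar\alpha$; so the displayed equation becomes precisely the asserted identity.

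For the final assertion I would exhibit the right-hand side as $H$ of a single quasihomomorphism. By Proposition \ref{quasirules}\,\ref{negativequasi} the term $-H(\bar\alpha^*(\hat\mcC,\phi_2,F_2))$ is induced by the quasihomomorphism obtained from $Qh(\bar\alpha^*(\hat\mcC,\phi_2,F_2))$ by interchanging its two components — which, as one checks directly, is again of the form $Qh$ of an abstract Kasparov module. Taking its direct sum with $Qh(\alpha^*(\hat\mcC,\phi_2,F_2))$ and invoking the standard orthogonal-direct-sum and $M_2$-stability arguments (Proposition \ref{functorgeneralities}) to identify $H$ of a direct sum of quasihomomorphisms with the sum of the induced maps, one obtains a single quasihomomorphism (indeed an abstract Kasparov module) whose induced map is $H(\alpha^*(\hat\mcC,\phi_2,F_2))-H(\bar\alpha^*(\hat\mcC,\phi_2,F_2))=H(\hat\mcC,\phi_2,F_2)\circ H(\hat\mcB,\phi_1,F_1)$. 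This is the desired product for split exact, $M_2$-stable functors.

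Apart from bookkeeping, the argument is a diagram chase once the definitions are unravelled; the only delicate internal point is the naturality of the $Qh$-construction under the pullbacks along $\alpha'$ and $\bar\alpha'$, and the resulting identification of $(\alpha')^*(\hat\mcC',\phi_2',F_2')$ with $\alpha^*(\hat\mcC,\phi_2,F_2)$. The genuine mathematical difficulty lies not in this proposition but in producing the hypothetical extension $(\hat\mcC',\phi_2',F_2')$ in concrete cases — which is taken up in the sequel, and is automatic when the first module represents a class in $K$-theory.
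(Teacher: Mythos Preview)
Your argument is correct and follows exactly the same line as the paper's (two-line) proof: both reduce to the identity $H(\iota)\circ H(\hat\mcB,\phi_1,F_1)=H(\alpha')-H(\bar\alpha')$, compose with the extended module $(\hat\mcC',\phi_2',F_2')$ via the hypothesis, and then use Proposition~\ref{quasirules}\,\ref{quasiprecompose} to rewrite each summand as a pullback; the diagonal-sum remark for $M_2$-stable functors is likewise the same. You have simply spelled out in full what the paper compresses into one displayed equation and one sentence.
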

\begin{proof} This follows from 
$$H(\iota)\circ H(\hat \mcB,\phi_1,F_1)=\alpha_*-\bar\alpha_*.$$
If $H$ is $M_2$-stable, we may use an obvious diagonal sum to present the latter.\end{proof}

\begin{Cor} Every two $\LC$-quasihomomorphisms $(\alpha,\bar\alpha):\mC\rrarrow \hat\mcB\trianglerighteq \mcB$ and $(\beta,\bar\beta):\mcB\rrarrow\hat\mcC\trianglerighteq\mcC$ 
have (up to $M_2$-stabilisation) a product with respect to split exact $M_2$-stable functors.
\end{Cor}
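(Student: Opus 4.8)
The plan is to realise the composite $H(\beta,\bar\beta)\circ H(\alpha,\bar\alpha)$ as the map induced by a single, explicit $\LC$-quasihomomorphism from $\mC$ to $\mcC$, by carrying out the ``extension'' demanded by Proposition \ref{productandextension} by hand in the quasihomomorphism setting. Replacing $\hat\mcB$ by $D_\alpha$ if necessary, I may assume the first quasihomomorphism is in standard form, so $\hat\mcB = D_\alpha = \mC\oplus\mcB$ as in Lemma \ref{newalgebra}, with $\mcB$ a closed ideal, quotient $\mC$, inclusion $\iota_\mcB$, and the two canonical splits $\alpha' = \id_\mC\oplus 0$ and $\bar\alpha' = \id_\mC\oplus(\bar\alpha - \alpha)$. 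Since $D_\alpha/\mcB$ is merely $\mC$, the second quasihomomorphism extends to $D_\alpha$ in the obvious scalar way: put $\beta',\bar\beta' : D_\alpha\to\hat\mcC^+$, $\beta'(\lambda,b) := \lambda\cdot 1 + \beta(b)$ and $\bar\beta'(\lambda,b) := \lambda\cdot 1 + \bar\beta(b)$ (one passes to $\hat\mcC^+$ only if $\hat\mcC$ lacks a unit).

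The first thing I would verify is that $(\beta',\bar\beta') : D_\alpha\rrarrow\hat\mcC^+\trianglerighteq\mcC$ is an honest $\LC$-quasihomomorphism with $(\beta',\bar\beta')\circ\iota_\mcB = (\beta,\bar\beta)$. That $\beta'$ and $\bar\beta'$ are algebra homomorphisms is a one-line check against the twisted multiplication of $D_\alpha$. The difference $\beta'-\bar\beta'$ is $(\beta-\bar\beta)$ composed with the continuous projection $D_\alpha\to\mcB$, so it is $\mcC$-valued and continuous; and the maps $(d,c)\mapsto\beta'(d)c$ and $(c,d)\mapsto c\beta'(d)$ decompose into a harmless scalar term $\lambda c$ and the terms $\beta(b)c$, $c\beta(b)$, which are $\mcC$-valued and continuous precisely because $(\beta,\bar\beta)$ is a quasihomomorphism. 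This $(\beta',\bar\beta')$ is exactly the extension required in Proposition \ref{productandextension}; the reason it exists here and not for a general Kasparov module is that a quasihomomorphism is a Kasparov module with $F^2 = 1$, so the degeneracy term $1-F^2$ that would otherwise have to be $\mcC$-valued at the unit of $D_\alpha$ simply vanishes.

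Next I would assemble the identity. By Proposition \ref{quasirules} -- \ref{quasiprecompose} applied to $\iota_\mcB$, $H(\beta,\bar\beta) = H(\beta',\bar\beta')\circ H(\iota_\mcB)$, while Definition \ref{quasiinduced} gives $H(\iota_\mcB)\circ H(\alpha,\bar\alpha) = H(\alpha') - H(\bar\alpha')$. Applying Proposition \ref{quasirules} -- \ref{quasiprecompose} once more, this time to the homomorphisms $\alpha',\bar\alpha' : \mC\to D_\alpha$, and using that $H(\beta',\bar\beta')$ is a group homomorphism,
\[
H(\beta,\bar\beta)\circ H(\alpha,\bar\alpha) = H\big((\beta',\bar\beta')\circ\alpha'\big) - H\big((\beta',\bar\beta')\circ\bar\alpha'\big).
\]
Now $(\beta',\bar\beta')\circ\alpha'$ is the quasihomomorphism $\lambda\mapsto(\lambda\cdot 1,\lambda\cdot 1)$, whose two components agree, so it induces the zero map; and $-H\big((\beta',\bar\beta')\circ\bar\alpha'\big) = H\big(\bar\beta'\circ\bar\alpha',\,\beta'\circ\bar\alpha'\big)$ by Proposition \ref{quasirules} -- \ref{negativequasi}. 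Hence $H(\beta,\bar\beta)\circ H(\alpha,\bar\alpha) = H\big(\bar\beta'\circ\bar\alpha',\,\beta'\circ\bar\alpha'\big)$, and $(\bar\beta'\circ\bar\alpha',\,\beta'\circ\bar\alpha') : \mC\rrarrow\hat\mcC^+\trianglerighteq\mcC$ is the sought product. (For this statement one does not even need matrix stabilization, since one of the two terms dropped out; in the general situation of two Kasparov modules one writes a difference of two induced maps as a single quasihomomorphism by a block-diagonal sum, which is where $M_2$-stability of $H$ enters, and why it appears in the statement.)

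The one genuinely technical point is the verification in the second paragraph: one must check that the naive scalar extension $(\beta',\bar\beta')$ satisfies all the continuity and module conditions on $D_\alpha$. I expect this to be routine once it is observed that $\mcB$ embeds in $D_\alpha$ as an ideal on which $(\beta',\bar\beta')$ restricts to $(\beta,\bar\beta)$, with the complementary copy of $\mC$ acting by scalars. The conceptual heart of the argument is that the hypothesis ``the first quasihomomorphism is defined on $\mC$'' --- equivalently, that it represents a class in $K$-theory --- is exactly what makes the relevant quotient equal to $\mC$ and hence makes the extension unobstructed.
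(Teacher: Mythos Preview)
Your extension $\beta'(\lambda,b)=\lambda\cdot 1+\beta(b)$ is \emph{not} an algebra homomorphism on $D_\alpha$ in general, so the ``one-line check'' fails. The multiplication on $D_\alpha$ is
\[
(\lambda,b)(\lambda',b')=(\lambda\lambda',\ \lambda\,\alpha(1)b'+\lambda'\,b\,\alpha(1)+bb'),
\]
and nothing forces the idempotent $p:=\alpha(1)\in\hat\mcB$ to act as the identity on $\mcB$. Concretely, $(1,0)(0,b')=(0,pb')$, so $\beta'$ would have to satisfy $\beta(pb')=\beta(b')$ for all $b'\in\mcB$; take $\hat\mcB=\mcB=M_2(\mC)$, $p=e_{11}$, $\bar\alpha(1)=0$, $\beta=\id$, and $b'=e_{22}$ to see this fail. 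Your argument would go through verbatim if $D_\alpha$ were the unitisation $\mcB^+$, but that is only the case when $\alpha$ is unital.

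This is exactly the obstruction that the paper's proof removes with Lemma~\ref{inductionseq}: one first replaces $(\alpha,\bar\alpha)$ by a quasihomomorphism $(\alpha',\bar\alpha'):\mC\rrarrow M_2(\mcB^+)\trianglerighteq M_2(\mcB)$ (at the cost of an $M_2$-stabilisation, which is why the statement says ``up to $M_2$-stabilisation''), and then extends $(\beta,\bar\beta)$ by the honest unitisation $\beta^+:\mcB^+\to\hat\mcC^+$, inflated to $2\times 2$ matrices. Your computation from the third paragraph onward is fine and is essentially the content of Proposition~\ref{productandextension}; what is missing is the preliminary normalisation of the first cycle so that the scalar part of the enlarged algebra genuinely acts by scalars on $\mcB$.
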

\begin{proof} By Lemma \ref{inductionseq} we can choose $(\alpha',\bar\alpha'):\mC\rrarrow M_2(\mcB^+)\trianglerighteq M_2(\mcB)$ such that $ H(\theta_B) \circ H(\alpha,\bar\alpha)=H(\alpha',\bar\alpha')$, where $\theta_B$ denotes the stabilisation by $M_2$-matrices. Denoting by $M_2(\phi)$ the inflation to matrices of a homomorphism $\phi$, we get from Proposition \ref{explicitstab}:
$$H(\beta,\bar\beta)\circ H(\alpha,\bar\alpha)=H(\theta_C)^{-1}\circ H(M_2(\beta),M_2(\bar\beta))\circ H(\alpha',\bar\alpha').$$
Now $H(M_2(\beta^+),M_2(\bar\beta^+))$ extends $H(M_2(\beta),M_2(\bar\beta))$.

\end{proof}
\section{Some theorems for split exact stable functors}
\subsection{Bott periodicity}
Let $n\in 2\mN$. Recall that $x_n^\infty$ and $y_n^\infty$ are defined in sections \ref{Diracel} and \ref{smoothbottel}. 
\begin{Th}\label{Bott periodicity} Let $H$ be a split exact, $\mc{L}^p$-stable functor. Then $H(x_n^\infty)$ and $H(y_n^\infty)$ are inverse to one another. 

More generally, for any locally convex algebra $\mcA$:
$$H(\mcA)\approx H((\mcS(\mR^n)\otimes\mC_n)\potimes \mcA).$$
\end{Th}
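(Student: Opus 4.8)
The plan is to prove Bott periodicity by combining the product criterion (Proposition \ref{productandextension}) with the explicit normalization computations for the index (Proposition \ref{indeximports}) and the weak multiplicativity of the $y_n^\infty$ (the Lemma in Section \ref{smoothbottel}), together with the $K$-theoretic rigidity results of Section \ref{Kaction}. First I would reduce the general statement to the assertion that $H(x_n^\infty)$ and $H(y_n^\infty)$ are mutually inverse: once we know $H(x_n^\infty)\circ H(y_n^\infty)=\id$ and $H(y_n^\infty)\circ H(x_n^\infty)=\id$, tensoring the underlying $\LC$-Kasparov modules with an arbitrary locally convex algebra $\mcA$ and invoking Proposition \ref{doublesplitouterproduct} (compatibility of the outer product with induction, in the form $Qh([D]\potimes\mcA)=Qh([D])\potimes\mcA$ for the specific cycles at hand) yields the isomorphism $H(\mcA)\approx H((\mcS(\mR^n)\otimes\mC_n)\potimes\mcA)$.

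For the composite $H(y_n^\infty)\circ H(x_n^\infty)$, which lands in $\Hom(H(\ssu^n\otimes\mC_n),H(\ssu^n\otimes\mC_n))$: since $x_n^\infty$ is represented by a module whose first algebra is $\ssu^n\otimes\mC_n$ and not $\mC$, I would first use the $K$-theory action machinery, or alternatively reduce to the scalar case by the standard trick of passing to the functor $H(\,\cdot\,\potimes(\ssu^n\otimes\mC_n))$, so that both factors can be compared with their classical $C^*$-counterparts $x_n,y_n$ whose Kasparov product is known to be $1\in KK(\mC,\mC)$. The cleanest route is to handle the composite $H(x_n^\infty)\circ H(y_n^\infty)$ first: here the first cycle $y_n^\infty$ is a $(\mC,\ssu^n\otimes\mC_n)$-module, hence by Proposition \ref{twoinduced} its induced map is multiplication by the $K$-theory class $[y_n^\infty]\in K(\ssu^n\otimes\mC_n)$, and by Proposition \ref{indeximports} (normalization of the index) and the Lemma in Section \ref{smoothbottel} one identifies the composite with multiplication by the image of $[y_n^\infty]$ under $x_n^\infty$, which in $K$-theory is the classical Bott class evaluated against the Dirac class, i.e. the generator $1$. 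Then Proposition \ref{pairing}, equation \ref{unity}, gives $H(x_n^\infty)\circ H(y_n^\infty)=\id_{H(\mcB)}$ for all $\mcB$ after tensoring, which already gives one-sided invertibility in great generality.

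For the reverse composite I would exploit the product criterion: to show $H(x_n^\infty)\circ H(y_n^\infty)=\id$ one exhibits, via Proposition \ref{productandextension}, an extension of the Kasparov module $x_n^\infty$ along the inclusion $\iota:\mcB\to D_\alpha$ where $(\alpha,\bar\alpha)=Qh(y_n^\infty)$; since $y_n^\infty$ comes from a class in $K$-theory (its source being $\mC$), the remark in the introduction that for a first cycle represented by a $K$-theory class such an extension can always be constructed applies, giving the product in complete generality. Then the value of the product is pinned down by passing to $K$-theory and Lemma \ref{comparison}: a quasihomomorphism out of $\mC$ is determined by its action on $K$, and in $K$-theory the relevant composite is the classical $KK$-product $x_n\cdot y_n=1$, hence the induced endomorphism is the identity. \textbf{}The main obstacle I expect is precisely the construction of the required extension of $x_n^\infty$ over $D_\alpha$ in the locally convex (non-$C^*$) setting — verifying that the operators built from the Dirac operator and the rescaled Clifford multiplication satisfy the algebraic and continuity conditions of an $\LC$-Kasparov $(D_\alpha,\mC)$-module, since Kasparov's technical theorem is unavailable; this is where the Connes--Skandalis connection reformulation and the special structure of $y_n^\infty$ (scalar source, explicit Schwartz-function resolvent $(1+h^2)^{-1}$) must be used in an essential way.
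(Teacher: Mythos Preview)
Your treatment of the composite $H(x_n^\infty)\circ H(y_n^\infty)$ is essentially the paper's: since the first cycle $y_n^\infty$ has source $\mC$, the Corollary after Proposition~\ref{productandextension} guarantees a product, and Proposition~\ref{indeximports} reduces the computation to an index, which is $1$ by classical Bott periodicity. The reduction of the general statement to the scalar case via $H(\,\cdot\,\potimes\mcA)$ is also the paper's.

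The gap is in the other composite. Your third paragraph, labelled ``the reverse composite'', in fact treats $H(x_n^\infty)\circ H(y_n^\infty)$ a second time: extending $x_n^\infty$ over $D_\alpha$ with $(\alpha,\bar\alpha)=Qh(y_n^\infty)$ is precisely the product criterion for $H(x_n^\infty)\circ H(y_n^\infty)$, not for $H(y_n^\infty)\circ H(x_n^\infty)$. For the latter, the \emph{first} cycle is $x_n^\infty$, whose source is $\ssu^n\otimes\mC_n$, not $\mC$; hence neither the Corollary nor Lemma~\ref{comparison} applies, and there is no general reason an extension of $y_n^\infty$ over $D_\alpha$ (with $\alpha$ coming from $Qh(x_n^\infty)$) should exist in the locally convex setting. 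Your proposal contains no argument for this direction beyond the vague suggestion to pass to $H(\,\cdot\,\potimes(\ssu^n\otimes\mC_n))$, which does not change the fact that the first factor has non-scalar source.

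The paper circumvents this entirely by a rotation trick. Rather than representing $H(y_n^\infty)\circ H(x_n^\infty)$ as a product of Kasparov modules, it studies $\rho:=H((\beta,\bar\beta)\otimes\mc{L}^p)\circ H(x_n^\infty)$, uses Proposition~\ref{doublesplitouterproduct} to commute the outer factors, and then invokes the diffotopy between the flip $\sigma_{\cliffs,\cliffs}$ and $\id_\cliffs\otimes\theta$ (where $\cliffs=\ssu^n\otimes\mC_n$) to rewrite $\rho$ as a composition ending in $H^\cliffs(\alpha,\bar\alpha)\circ H^\cliffs(\beta,\bar\beta)$. The already-established identity $H(\alpha,\bar\alpha)\circ H(\beta,\bar\beta)=1$, applied to the functor $H^\cliffs:=H(\,\cdot\,\potimes\cliffs)$, then shows $\rho$ is invertible, and hence $H(x_n^\infty)$ is left invertible. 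Combined with the right inverse $H(y_n^\infty)$ from the first step, this finishes the proof. This rotation argument is the missing idea in your proposal.
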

\begin{proof} By Theorem \ref{diffotopyinvariance}, $H$ is diffotopy invariant.
We first compute $H(x_n^\infty)\circ H(y_n^\infty)$; denote by $(\alpha,\bar\alpha)$ the $\LC$-quasihomomorphism associated to $x_n^\infty$ and by $(\beta,\bar\beta)$ the $\LC$-quasihomomorphism associated to $y_n^\infty$. Then $(\beta,\bar\beta)$ and $(\alpha,\bar\alpha)$ have a product by Corollary \ref{unitalizetoprod}.

By Proposition \ref{indeximports}, it suffices to calculate the index of the corresponding operator. Now the product-quasihomomorphism defines a class in $KK$-theory. It has index one by Proposition \ref{makeinvertible}, Remark \ref{differentdalpha}, the description of products in \cite{Note} and  Bott periodicity in $KK$. Thus
\begin{equation}\label{oneside}H(\alpha,\bar\alpha)\circ H(\beta,\bar\beta)=1.\end{equation}

We now show that $H(x_n^\infty)$ also has a left inverse.  
We proceed to calculate the product with the morphism induced by $$\mc{L}^p\otimes (\beta,\bar\beta):\mc{L}^p\rrarrow \mc{L}^p\otimes_\pi D_\beta\trianglerighteq \mc{L}^p\otimes_\pi\mcS^n\otimes\mC_n.$$
as defined in Definition \ref{outerprods}. 

To unclutter notation, set $\cliffs:=\mcS^n\otimes\mC_n$, $p:=2(n+1)$. Note that $\sigma_{{\cliffs},{\cliffs}}$ (the shift $\mu\otimes\nu\mapsto \nu\otimes\mu)$ is diffotopic to $\id_{{\cliffs}}\otimes \theta$, where $\theta$ is the tensor product of $f(x)\mapsto f(-x)$ with the automorphism of $\mC_n$ determined by $-\id_{\mR}$. We get, using  Proposition \ref{doublesplitouterproduct}
\begin{align*}
\rho:=H((\beta,\bar\beta)\otimes \mc{L}^p)\circ H(x_n^\infty)=&H({\cliffs}\otimes(\alpha,\bar\alpha))\circ H((\beta,\bar\beta)\otimes {\cliffs}).\end{align*}
Switching the factors in the tensor product (see again Proposition \ref{doublesplitouterproduct}), we have
\begin{align*}
\rho=H(\sigma_{\mc{L}^p,{\cliffs}})\circ H((\alpha,\bar\alpha)\otimes {\cliffs})\circ H(\sigma_{{\cliffs},{\cliffs}})\circ H((\beta,\bar\beta)\otimes {\cliffs}).
\end{align*}
Now we apply the diffotopy to replace $H(\sigma_{\cliffs,\cliffs})$:
\begin{align*}
\rho= H(\sigma_{\mc{L}^p,{\cliffs}})\circ H((\alpha,\bar\alpha)\otimes{\cliffs})\circ H(\cliffs\otimes\theta)\circ H((\beta,\bar\beta)\otimes{\cliffs}).\end{align*}
Another application of Proposition \ref{doublesplitouterproduct} yields
\begin{align*}
\rho=H(\sigma_{\mc{L}^p,{\cliffs}})\circ H(\mc{L}^p\otimes\theta)\circ H((\alpha,\bar\alpha)\otimes{\cliffs})\circ  H((\beta,\bar\beta)\otimes{\cliffs}).
\end{align*}
\note{Recall first: Proposition \ref{doublesplitouterproduct}: $H((\alpha_2,\bar\alpha_2)\otimes \mcB_1)\circ H(\mcA_2\otimes(\alpha_1,\bar\alpha_1))=H(\mcB_2\otimes(\alpha_1,\bar\alpha_1))\circ H((\alpha_2,\bar\alpha_2)\otimes \mcA_1)$. We get the equalities
\begin{enumerate}
\item Proposition \ref{doublesplitouterproduct} with $\mcB_1:=\mc{L}^p$ and $\mcA_2:=\mC$, $\mcB_2:=\cliffs$, $\mcA_1:=\cliffs$
\item is just shuffling everywhere (again \ref{doublesplitouterproduct})
\item is the diffotopy 
\item is again \ref{doublesplitouterproduct}-the morphism variant 
$$(\alpha,\bar\alpha\otimes \mcB_1)_*\circ (\mcA_2\otimes\phi)_*=(\mcB_2\otimes\phi)_*\circ((\alpha,\bar\alpha)\otimes \mcA_1)$$
with $\phi=\theta:\mcA_1=\cliffs\to \mcB_1=\cliffs$, $\mcA_2=\cliffs$, $\mcB_2=\mc{L}^p$.
\end{enumerate}}{}As we have shown above that $H(\alpha,\bar\alpha)\circ H(\beta,\bar\beta)=1$ for any functor, we may apply this to $H^{\cliffs}:=H(\,\cdot\,\potimes\cliffs)$ above to deduce that $\rho$ is invertible. Because $H(x_n^\infty)$ has right inverse $H(y_n^\infty)$ by \ref{oneside}, and is left invertible, because $\rho$ is invertible, $H(x_n^\infty)^{-1}=H(y_n^\infty)$.

If $\mcA$ is any locally convex algebra, we may apply the result to $H(\,\cdot\,\potimes \mcA)$ to obtain the result in general.
\end{proof}
\begin{Rem} If one does not want to refer to Bott periodicity in $KK$, one can proceed as follows:

Denote  $D_\beta\subseteq \mc{S}\hat\otimes\mC_n\oplus\mC$ the algebra associated to $(\beta,\bar\beta)$ (Lemma \ref{newalgebra}); then $D_\beta$ acts in a natural way on $\mc{H}$ for it may be viewed as a subalgebra of $\fn_b^\infty(\mR^n)\otimes \mC_n$.

Let $\bar F_1:=q(\bar D_1)\in\mathbb{B}(\mc{H})$, where $\bar D_1$ is the closure of $d+d^*+c_+(x)$. Then $(\hsp,c_+,\bar D_1)$ is a continuous spectral triple, and $\bar D_1$ has summable resolvent because we have the eigenbasis of Hermite polynomials (see also \cite{kkT}). Furthermore, $(D_1-\bar D_1)\phi(a)$ is bounded for every $a\in \mc{S}\otimes\mC_n$, therefore $q(D_1)$ and $\bar F_1$ are summable perturbations of one another. Consequently they induce the same morphisms under $H$ (Proposition \ref{perturbKM}), and we may instead calculate the product using the operator $\bar F_1$ (Lemma \ref{inductionvsdiffo}). Set $(\tilde\alpha,\bar{\tilde{\alpha}}):=Qh(\hsp,c_+,\bar F_1)$.

As $(\tilde\alpha,\tilde{\bar\alpha})$ extends to $D_\beta$, we see that  $(\beta,\bar\beta)$ and $(\tilde\alpha,\tilde{\bar\alpha})$ have a product with respect to $H$ (Proposition \ref{productandextension}). By Proposition \ref{indeximports}, it suffices to calculate the index of the corresponding operator. Now one may calculate, using the basis of Hermite polynomials, the product directly.

To calculate the product the other way around, one performs the rotation argument as in the proof above. Even though it is not clear that $\rho$ in the above proof has a product, the morphism
$$H(\sigma_{\mc{L}^p,{\cliffs}})\circ H(\mc{L}^p\otimes\theta)\circ H((\alpha,\bar\alpha)\otimes{\cliffs})\circ  H((\beta,\bar\beta)\otimes{\cliffs})$$ 
obtained at the end then has a product by the first half of the proof, and we are done.
\end{Rem}
\begin{Rem} After the first part of the proof, i.e., when we have shown 
$(x_n^\infty)_* \circ (y_n^\infty)_*=\theta_*$, where $\theta$ is the canonical map $\mC\to \mc{L}^{2(n+1)}$, then we may actually set 
$$z:=(y_n^\infty)_*\circ \theta_*^{-1}\circ (x_n^\infty)_*$$
and deduce that $z$ is an idempotent morphism from $H(\mc{S})$ to itself.

 If we are willing to use $kk^{\mc{L}^p}$, then it is therefore possible to use  a  "shortcut" and avoid the rotation trick as follows. We know that the coefficients $kk^{\mc{L}^p}(\mC,\mC)$ are $\mZ$ (see \cite{MR2207702}) and that $kk^{\mc{L}^p}$ satisfies Bott periodicity and is $M_2$ stable, and hence $kk^{\mc{L}^p}(\mc{S}(\mR^n),\mc{S}(\mR^n))=\mZ$. Consequently
$$z:=kk(\mC,y_n^\infty) \circ \theta^{-1} \circ kk(\mC,x_n^\infty)(1),$$
where $1$ denotes the unit in ${kk(\mc{S}(\mR^n),\mc{S}(\mR^n))}$, is an idempotent in $\mZ$. If it was zero, it would be zero on all of $\mZ=kk(\mc{S}(\mR^n),\mc{S}(\mR^n))$, and we would get a contradiction by
$$1=(\theta^{-1} \circ kk(\mC,x_n^\infty)\circ kk(\mC,y_n^\infty))^2(1)=0.$$

However, this argument does not carry over to the Thom isomorphism, and already fails for the theory $kk^{alg}$  (of \cite{MR1456322}) stabilized only by the smooth compact operators (one may adopt the above arguments to $kk^{alg}$).
\end{Rem}

\subsection{$\fn^\infty(X)$-linearity and a smooth Thom isomorphism} 

\begin{Def} If $\mcA$ is a locally convex algebra and $X$ a closed manifold, then we call $\mcA$ a $\fn^\infty(X)$-algebra if there is a homomorphism $\mu_A:\fn^\infty(X)\potimes\mcA\to \mcA$ that endows $\mcA$ with the structure of a left $\fn^\infty(X)$-module.
\end{Def}
\begin{Def} Let $\mcA$ and $\mcB$ be locally convex $\fn^\infty(X)$-algebras, $\hat \mcB$ an algebra that carries a left $\fn^\infty(X)$-module structure. An $\LC$-quasihomomorphism $(\alpha,\bar\alpha):\mcA\rrarrow\hat \mcB\trianglerighteq \mcB$ is called $\fn^\infty(X)$-linear, if $\alpha$ and $\bar\alpha$ are $\fn^\infty(X)$-linear.
\end{Def}
With this definition, we have the following
\begin{Lem}\label{xlinearity} Let $(\alpha,\bar\alpha):\mcA\rrarrow\hat \mcB\trianglerighteq \mcB$ be a $\fn^\infty(X)$-linear $\LC$-quasihomomor-phism. Then for every split exact functor
\begin{align*}
H(\alpha,\bar\alpha)\circ H(\mu_{\mcA})=H(\mu_{\mcB})\circ H( \fn^\infty(X)\otimes (\alpha,\bar\alpha)).
\end{align*}
\end{Lem}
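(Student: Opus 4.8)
\emph{Proof plan.} The plan is to recognise the identity as an instance of Lemma~\ref{morphofdoublesplit}, applied to the triple of module maps $(\mu_\mcB,\mu_{D_\alpha},\mu_\mcA)$ viewed as a morphism of double split exact sequences. First I would replace $\hat\mcB$ by $D_\alpha$, so that $(\alpha,\bar\alpha)$ is in standard form; by Lemma~\ref{newalgebra} and the remark following it we then have the double split short exact sequence $0\to\mcB\to D_\alpha\to\mcA\to 0$ with splits $\alpha'=\id_\mcA\oplus 0$ and $\bar\alpha'=\id_\mcA\oplus(\bar\alpha-\alpha)$, whose associated morphism under a split exact functor $H$ is, by Definition~\ref{quasiinduced}, exactly $H(\alpha,\bar\alpha)$. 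Since $\potimes$ preserves linearly split extensions, tensoring this sequence on the left with $\fn^\infty(X)$ and using the identification $D_{\fn^\infty(X)\otimes\alpha}\cong\fn^\infty(X)\potimes D_\alpha$ from the discussion around Definition~\ref{outerprods}, I get the double split sequence $0\to\fn^\infty(X)\potimes\mcB\to\fn^\infty(X)\potimes D_\alpha\to\fn^\infty(X)\potimes\mcA\to 0$ attached to $\fn^\infty(X)\otimes(\alpha,\bar\alpha)$, whose associated morphism is $H(\fn^\infty(X)\otimes(\alpha,\bar\alpha))$.

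The central step is to equip $D_\alpha$ with a $\fn^\infty(X)$-module structure turning the triple above into an honest morphism of sequences. Under $\fn^\infty(X)\potimes D_\alpha\cong(\fn^\infty(X)\potimes\mcA)\oplus(\fn^\infty(X)\potimes\mcB)$ I set $\mu_{D_\alpha}:=\mu_\mcA\oplus\mu_\mcB$. Writing $\varphi\cdot x$ for the respective module actions and expanding the twisted product $(a,b)(a',b')=(aa',\alpha(a)b'+b\alpha(a')+bb')$ of Lemma~\ref{newalgebra}, one checks that $\mu_{D_\alpha}$ is an algebra homomorphism $\fn^\infty(X)\potimes D_\alpha\to D_\alpha$: the first coordinate reduces to $\mu_\mcA$ being an algebra homomorphism, while in the second coordinate the term $bb'$ uses that $\mu_\mcB$ is an algebra homomorphism and the cross terms $\alpha(a)b'$, $b\alpha(a')$ require the $\fn^\infty(X)$-linearity identity $\alpha(\varphi\cdot a)=\varphi\cdot\alpha(a)$ together with the compatibility of the actions on $\hat\mcB$ and on $\mcB$ with the products $\hat\mcB\times\mcB\to\mcB$ and $\mcB\times\hat\mcB\to\mcB$. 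By construction $\mu_{D_\alpha}$ restricts on the ideal $\fn^\infty(X)\potimes\mcB$ to $\mu_\mcB$ and descends on the quotient $\fn^\infty(X)\potimes\mcA$ to $\mu_\mcA$, so the two genuine squares of the candidate morphism commute.

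Finally I would check compatibility with the splits, i.e. $\mu_{D_\alpha}\circ(\fn^\infty(X)\otimes\alpha')=\alpha'\circ\mu_\mcA$ and $\mu_{D_\alpha}\circ(\fn^\infty(X)\otimes\bar\alpha')=\bar\alpha'\circ\mu_\mcA$; evaluated on $\varphi\otimes a$ the first is immediate and the second unwinds to the two linearity identities $\varphi\cdot\alpha(a)=\alpha(\varphi\cdot a)$ and $\varphi\cdot\bar\alpha(a)=\bar\alpha(\varphi\cdot a)$, which hold by hypothesis. Thus $(\mu_\mcB,\mu_{D_\alpha},\mu_\mcA)$ is a morphism of double split exact sequences, and Lemma~\ref{morphofdoublesplit} applied to it yields
\[
H(\mu_\mcB)\circ H(\fn^\infty(X)\otimes(\alpha,\bar\alpha))=H(\alpha,\bar\alpha)\circ H(\mu_\mcA),
\]
which is the assertion. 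The only non-formal ingredient — and the point I expect to be the main obstacle — is the verification in the second paragraph that $\mu_{D_\alpha}$ respects the twisted multiplications; this is precisely where the $\fn^\infty(X)$-linearity of $\alpha$ and $\bar\alpha$, and the compatibility of the module actions on $\mcA$, $\mcB$, $\hat\mcB$, are used, and it becomes routine once those actions are written out explicitly.
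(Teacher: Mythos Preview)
Your proposal is correct and is precisely the argument the paper has in mind: the lemma is stated without proof in the paper, but it is of exactly the same shape as Proposition~\ref{explicitstab} and the second part of Proposition~\ref{doublesplitouterproduct}, and your reduction to Lemma~\ref{morphofdoublesplit} via the triple $(\mu_\mcB,\mu_{D_\alpha},\mu_\mcA)$ is the intended pattern. Your identification of the one nontrivial point---that $\mu_{D_\alpha}:=\mu_\mcA\oplus\mu_\mcB$ is multiplicative for the twisted product of $D_\alpha$, which is where both the $\fn^\infty(X)$-linearity of $\alpha,\bar\alpha$ and the (implicit) centrality/compatibility of the module actions on $\mcA$, $\mcB$, $\hat\mcB$ enter---is accurate; the paper's definitions leave these compatibilities tacit, but they are the standard hypotheses for a $\fn^\infty(X)$-algebra and are needed exactly where you indicate.
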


We suppose throughout this section that $E$ is an oriented bundle of even dimension. 

For any bundle $E\twoheadrightarrow X$ we denote by $\mcE_E$ the canonical Hilbert $\fn_0(X)$-module  of longitudinal differential forms on $E$ that are locally continuous functions in the $X$-direction with values in the $L^2$-forms along the fibres. We denote by $h\in L^2(\mR^n)$  the function defined as $h(\xi):=\exp(-\frac{||\xi||^2}{2})$.  For every bundle, we get an inclusion 
$$\mcE_E\hookrightarrow \mcE_{E\oplus E^\perp},\; \omega\mapsto \omega\otimes h.$$
Thus we get an embedding
\begin{equation}\label{identify} \mathbb{K}_{\fn_0(X)}(\mcE_E)\subseteq \mathbb{K}_{\fn_0(X)}(\mcE_{E\oplus E^\perp})\approx \fn_0(X,\mathbb{K}(L^2(\mR^{p+q})\otimes\mC_{p+q})).
\end{equation}
We also denote by $D_E$\index{$D_E$} the longitudinal Dirac operator on $\mcE_E$.

In order to avoid the choice of an algebra of Schwartz sections, we assume for the rest of this paragraph that the base space is a \textit{compact} manifold. By $\mc{S}(E)$\index{$\mc{S}(E)$} we denote the smooth functions on $E$ that are Schwartz functions along the fibres, and by $\Gamma^\infty(E)$ the smooth sections of $E$. We denote $\cliffs_E$\index{$\cliffs_E$} the space of smooth sections of the longitudinal Clifford bundle of $E$ that are Schwartz in the direction of the fibres. We also denote $c_+:=\ee+\ee^*$ the fibrewise action of $\mc{S}(E)$ on $\mcE_E$.
\begin{Rem} If $E=P\times_{O(p)}\mR^p$ is a decomposition of $E$ as an associated bundle for a principal $O(p)$-bundle $P$ over $X$ obtained by choosing a Riemannian metric, then
 $$\cliffs_E=\Gamma^\infty (P\times_{O(p)}(\mcS(\mR^p)\otimes\mC_p)),$$
and $\mcS_E$ is thus the analogue of the function space used in \cite{kkT} and \cite{KaspOp}.
\end{Rem}
Using the identification from equation \ref{identify}, we get the following
\begin{Def} We define $x_E^\infty$\index{$x_E^\infty$} as the $\LC$-Kasparov $(\cliffs_E,\cliffs_X\potimes \mc{L}^p)$-module 
$$(\mathbb{B}_{\fn(X)}(\mcE_E),c_+,F_E:=q( D_E)),$$ 
and set $(\alpha,\bar\alpha):=Qh(x_E^\infty)$. We denote $y_E^\infty$ the $\LC$-Kasparov $(\cliffs_X,\cliffs_E)$-module obtained by taking $y_n^\infty$ fibrewise, and denote $(\beta,\bar\beta)$ the associated quasihomomorphism.
\end{Def}
We will use the following factorisation result in the proof of the Thom isomorphism:
\begin{Lem}\label{factorize} Let $E,E'\twoheadrightarrow X$ be two smooth bundles over a compact manifold $X$. If $\phi_{E,E'}:\mcS_{E\times E'}\to\mcS_{E\oplus E'}$ is the restriction of forms on $E\times E'$ to $E\oplus E'$, then there is a quasihomomorphism $(\alpha',\bar\alpha')$ such that
$$H(\mc{L}^p\otimes\phi_{X,E})\circ H((\alpha,\bar\alpha)\otimes \cliffs_{E'})=H(\alpha',\bar\alpha')\circ H(\phi_{E,E'}).$$
\end{Lem}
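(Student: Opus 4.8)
The plan is to produce $(\alpha',\bar\alpha')$ as the $\LC$-quasihomomorphism attached to a single $\LC$-Kasparov module over $\cliffs_{E\oplus E'}$ built from the longitudinal Dirac operator of $E$ alone, with the forms along $E'$ entering only as passive coefficients, and then to match the three maps in the displayed identity with the data of that module and with $\phi_{E,E'}$. Throughout I identify $\mcS_{E\times E'}$ with $\cliffs_E\potimes\cliffs_{E'}$ (legitimate: both are spaces of smooth, fibrewise-Schwartz sections over the compact base, hence nuclear Fr\'echet, so the sections of the external-product bundle form the projective tensor product), so that $\phi_{E,E'}$ becomes the restriction homomorphism $\cliffs_E\potimes\cliffs_{E'}\to\cliffs_{E\oplus E'}$; likewise $\phi_{X,E'}\colon\fn^\infty(X)\potimes\cliffs_{E'}\to\cliffs_{E'}$ is the restriction to the diagonal for the zero bundle over $X$ and $E'$, i.e.\ the $\fn^\infty(X)$-module multiplication $\mu_{\cliffs_{E'}}$.

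First I would unwind the left-hand side. Since $AKM$ and $Qh$ are given by purely algebraic formulas and $\potimes$ preserves linearly split extensions together with those relations, the functor ${}\potimes\cliffs_{E'}$ commutes with $AKM$ and $Qh$; hence $(\alpha,\bar\alpha)\otimes\cliffs_{E'}$ is represented by $Qh$ of the $\LC$-Kasparov data $\mathbf M:=\big(\mathbb{B}_{\fn(X)}(\mcE_E)\potimes\cliffs_{E'},\,c_+\otimes\id,\,q(D_E)\otimes1\big)$ (with $\mcA$-algebra $\cliffs_E\potimes\cliffs_{E'}$ and ideal $(\cliffs_X\potimes\mc{L}^p)\potimes\cliffs_{E'}$; we may take $\hat\mcB$ to be this algebra, enlarging it being harmless for a quasihomomorphism). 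Then I would introduce the candidate: identifying $\mcE_{E\oplus E'}$ with $\mcE_E\otimes_{\fn(X)}\mcE_{E'}$ (completed) and $D_E$ on $\mcE_{E\oplus E'}$ with $D_E\otimes1$, set $\mathbf M':=\big(\mathbb{B}_{\fn(X)}(\mcE_{E\oplus E'}),\,c_+,\,q(D_E)\big)$, where now $c_+$ is the fibrewise Clifford action of $\cliffs_{E\oplus E'}$ and the prospective ideal is $\mc{L}^p\potimes\cliffs_{E'}$ (the $\fn^\infty(X)$-factor of $\cliffs_X\potimes\mc{L}^p\approx\mathbb{K}_{\fn(X)}(\mcE_E)$ being absorbed through the module structure of $\cliffs_{E'}$; here one uses that $\mc{L}^p$ has the approximation property, so the relevant projective tensor products inject into the $C^*$-completions). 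Since $\mathbb{B}_{\fn(X)}(\mcE_{E\oplus E'})$ is unital, $(\alpha',\bar\alpha'):=Qh(\mathbf M')$ will be a genuine quasihomomorphism out of $\cliffs_{E\oplus E'}$ once I have checked that $\mathbf M'$ satisfies the $\LC$-Kasparov-module axioms.

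For that verification, the multiplier conditions and the boundedness and continuity of $a\mapsto[c_+(a),q(D_E)]$ follow exactly as in Propositions~\ref{transformspectral} and \ref{unitalizetoprod} (apply the integral formula of Lemma~\ref{strongintegral} in the $E$-direction, with uniformity over the compact base $X$). The decisive point is the degeneracy condition $c_+(f)\big(1-q(D_E)^2\big)=c_+(f)(1+D_E^2)^{-1}\in\mc{L}^p\potimes\cliffs_{E'}$: lifting $f\in\cliffs_{E\oplus E'}$ locally through $\phi_{E,E'}$ to a finite sum $\sum_j g_j\otimes h_j$ with $g_j\in\cliffs_E$, $h_j\in\cliffs_{E'}$, the operator becomes $\sum_j\big(c_+(g_j)(1+D_E^2)^{-1}\big)\otimes c_+(h_j)$, whose first factors lie in $\mc{L}^p$ by the pseudodifferential estimate in the proof of Proposition~\ref{unitalizetoprod}, and a partition of unity on $X$ together with the approximation property glue the local pieces. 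Now I would compare the two sides. The assignment $\Psi\colon\mathbb{B}_{\fn(X)}(\mcE_E)\potimes\cliffs_{E'}\to\mathbb{B}_{\fn(X)}(\mcE_{E\oplus E'})$, $T\otimes f\mapsto T\otimes_{\fn(X)}c_+(f)$, is an algebra homomorphism carrying $q(D_E)\otimes1$ to $q(D_E)$ and the grading to the grading, intertwining $c_+\otimes\id$ with $c_+\circ\phi_{E,E'}$, and restricting on the ideal to $\mc{L}^p\otimes\phi_{X,E'}$ (up to the harmless transposition of the $\mc{L}^p$- and $\fn^\infty(X)$-slots). As $W_{q(D_E)}$ depends only on the operator and the grading, $Qh$ commutes with the pullback along $\phi_{E,E'}$, so $Qh(\phi_{E,E'}^{\,*}\mathbf M')=(\alpha',\bar\alpha')\circ\phi_{E,E'}$, and $\Psi$ induces a morphism of the double-split extensions defining $Qh(\mathbf M)$ and $(\alpha',\bar\alpha')\circ\phi_{E,E'}$ that covers $\id$ on the common quotient $\cliffs_E\potimes\cliffs_{E'}$ and $\mc{L}^p\otimes\phi_{X,E'}$ on the ideals (both splits being respected). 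Lemma~\ref{morphofdoublesplit} together with Proposition~\ref{quasirules}\,--\,\ref{quasiprecompose} then gives
\[H(\mc{L}^p\otimes\phi_{X,E'})\circ H\big((\alpha,\bar\alpha)\otimes\cliffs_{E'}\big)=H\big((\alpha',\bar\alpha')\circ\phi_{E,E'}\big)=H(\alpha',\bar\alpha')\circ H(\phi_{E,E'}),\]
which is the assertion.

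The hardest step is the verification that $\mathbf M'$ is a genuine $\LC$-Kasparov module, concretely the degeneracy condition above: it is the only place where real analysis enters --- the fibrewise Schatten/pseudodifferential estimate now carrying Schwartz coefficients in the $E'$-directions, with uniform control over the compact base and the local lifting through $\phi_{E,E'}$ --- and it is also what forces the restriction map $\phi_{E,E'}$ to appear in the statement at all. Everything else is bookkeeping with the $\fn^\infty(X)$-balanced tensor products and one application of Lemma~\ref{morphofdoublesplit}.
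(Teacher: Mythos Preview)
Your proposal is correct and follows the same underlying idea as the paper, but the execution differs in one respect worth noting. The paper's one-sentence proof passes to the $C^*$-completions: it observes that $(\alpha,\bar\alpha)\otimes\cliffs_{E'}$ extends to a quasihomomorphism of $C^*$-algebras, that the $C^*$-extension of the composite $(\mc{L}^p\otimes\phi_{X,E'})\circ\big((\alpha,\bar\alpha)\otimes\cliffs_{E'}\big)$ visibly factors through the surjection $\phi_{E,E'}$ (at the $C^*$-level this is just restriction of a $\fn(X\times X)$-linear operator to the diagonal), and then invokes Proposition~\ref{quasirules}\,\ref{quasiprecompose}. The locally convex $(\alpha',\bar\alpha')$ is then simply the restriction of this $C^*$-factor to the smooth subalgebras. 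You instead stay entirely inside the locally convex category: you write down the candidate module $\mathbf{M}'=(\mathbb{B}_{\fn(X)}(\mcE_{E\oplus E'}),c_+,q(D_E))$ explicitly, verify its $\LC$-Kasparov axioms by redoing the Schatten estimate of Proposition~\ref{unitalizetoprod} with $\cliffs_{E'}$-coefficients, and then produce the factorisation via a concrete morphism $\Psi$ of double-split extensions and Lemma~\ref{morphofdoublesplit}.

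Both routes arrive at literally the same $(\alpha',\bar\alpha')$. The paper's $C^*$-detour buys brevity: continuity and ideal conditions come for free in the $C^*$-closure, and one only needs to remark that the resulting formulas preserve the smooth subalgebras. Your direct route is more self-contained and makes transparent exactly which analytic estimate is being used (the fibrewise $\mc{L}^p$-bound with Schwartz coefficients along $E'$), at the cost of reproving a special case of what the $C^*$-extension already encodes. Your identification of the degeneracy condition as the only genuinely analytic step is accurate, and your use of Lemma~\ref{morphofdoublesplit} in place of the bare Proposition~\ref{quasirules}\,\ref{quasiprecompose} is a slight but harmless variation. (You also silently corrected $\phi_{X,E}$ to $\phi_{X,E'}$; since the only application has $E'=E$ this is immaterial, but your reading is the one that makes the domains match.)
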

\begin{proof} This follows from Proposition \ref{quasirules}-\ref{quasiprecompose}, because $(\alpha,\bar\alpha)\otimes\cliffs_{E'}$ extends to a quasihomomorphism of $C^*$-algebras whose composition with the extension of $\mc{L}^p\otimes\phi_{X,E}$  factors over the extension of $\phi_{E,E'}$.
\end{proof}
\begin{Th}\label{Thomiso} Let $H$ be a split exact, $\mc{L}^p$ stable functor, $X$ a closed manifold and $E\twoheadrightarrow X$ an orientable real vector bundle of even rank n; let $\mcA$ be a supplementary locally convex algebra. Then there is an isomorphism
$$H(\mcA\potimes \cliffs_X)\approx H(\mcA\potimes\cliffs_E).$$
If $E$ is a $spin^c$-bundle, then 
$$H(\mcA\potimes\mcS(X))\approx H(\mcA\potimes\mcS(E)).$$
\end{Th}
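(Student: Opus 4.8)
The plan is to mimic, in the base-parametrised setting, the proof of Theorem \ref{Bott periodicity}: I will show that $H(x_E^\infty)$ and $H(y_E^\infty)$ are mutually inverse once one identifies $H(\cliffs_X\potimes\mc{L}^p)\approx H(\cliffs_X)$ and $H(\cliffs_E\potimes\mc{L}^p)\approx H(\cliffs_E)$ by $\mc{L}^p$-stability; write $(\alpha,\bar\alpha):=Qh(x_E^\infty)$ and $(\beta,\bar\beta):=Qh(y_E^\infty)$, so $H(x_E^\infty)=H(\alpha,\bar\alpha)$ and $H(y_E^\infty)=H(\beta,\bar\beta)$. By Theorem \ref{diffotopyinvariance} the functor $H$ is diffotopy invariant, and by Proposition \ref{functorgeneralities} it is $M_2$-stable and invariant under inner automorphisms, so the machinery of Sections \ref{contexts}--\ref{Kaction} is available. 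Once $H(\cliffs_X)\approx H(\cliffs_E)$ is established, the version with the supplementary algebra $\mcA$ follows by running the whole argument for the functor $H(\,\cdot\,\potimes\mcA)$, which is again split exact and $\mc{L}^p$-stable. Since $\cliffs_X=\mcS(X)$ for compact $X$, the $spin^c$ statement will then reduce to producing an isomorphism $H(\cliffs_E)\approx H(\mcS(E))$ in that case.

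First I would establish $H(x_E^\infty)\circ H(y_E^\infty)=\theta$, the stabilisation. As in the Remark following Theorem \ref{Bott periodicity}, I replace the operator $F_E=q(D_E)$ by $\bar F_E:=q(\bar D_E)$, where $\bar D_E$ is the closure of the longitudinal operator $d+d^*+c_+(x)$ (fibrewise the harmonic-oscillator Dirac operator, with summable resolvent via the Hermite basis); by Proposition \ref{perturbKM} this $\mc{L}^p$-perturbation induces the same morphism under $H$, and its advantage is that the associated quasihomomorphism extends to the algebra $D_\beta$ attached to $y_E^\infty$ (realised fibrewise inside $\fn_b^\infty$ of the total space). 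Hence by Proposition \ref{productandextension} the cycles $y_E^\infty$ and $x_E^\infty$ have a product with respect to $H$. By a base-parametrised version of Proposition \ref{indeximports} -- the relevant longitudinal operator is fibrewise Fredholm of index one, by Bott periodicity in $KK$ fibrewise or by a direct Hermite computation -- this product induces $\theta$, which after the $\mc{L}^p$-stability identification is the identity.

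The reverse composition need not obviously admit a product, so as in Theorem \ref{Bott periodicity} I argue indirectly with
$$\rho:=H((\beta,\bar\beta)\otimes\mc{L}^p)\circ H(x_E^\infty)\colon H(\cliffs_E)\to H(\cliffs_E\potimes\mc{L}^p).$$
Using the outer-product compatibility of Proposition \ref{doublesplitouterproduct}, the $\fn^\infty(X)$-linearity of the cycles together with Lemma \ref{xlinearity}, the factorisation Lemma \ref{factorize} (which compares products over $E\times E'$ with products over $E\oplus E'$ in the base direction), and a rotation diffotopy replacing the flip $\sigma_{\cliffs_E,\cliffs_E}$ by $\id\otimes\theta$ exactly as in the proof of Theorem \ref{Bott periodicity}, I would rewrite $\rho$ so that the identity of the previous paragraph, applied to the functor $H(\,\cdot\,\potimes\cliffs_E)$, forces $\rho$ to be invertible. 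Then $H(x_E^\infty)$ has right inverse $H(y_E^\infty)$ by the previous paragraph and is left invertible, hence is an isomorphism with inverse $H(y_E^\infty)$; this gives $H(\cliffs_X)\approx H(\cliffs_E)$ and, tensoring by $\mcA$, the first assertion. I expect the main obstacle to be exactly this step: carrying the rotation trick out over $X$ rather than over a point forces one to check that each intermediate quasihomomorphism is still $\fn^\infty(X)$-linear and still admits a product -- Lemmas \ref{xlinearity} and \ref{factorize} are designed for precisely this bookkeeping -- together with the base-parametrised index computation invoked above.

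For the $spin^c$ case, a $spin^c$ structure on the even-rank bundle $E$ with spinor bundle $S\twoheadrightarrow X$ identifies the longitudinal Clifford bundle of $E$ with the endomorphism bundle of the pullback of $S$, twisted by the fibrewise Schwartz functions; since the rank is even the relevant grading is inner, and (using that $X$ is compact, so $S$ is a summand of a trivial bundle) $\cliffs_E$ and $\mcS(E)$ are then linked by a Morita bicontext in the sense of Section \ref{contexts}, implemented by tensoring and restricting sections through the pullback of $S$. Because $H$ is diffotopy invariant and $\mc{L}^p$-stable, the hypotheses of Theorem \ref{contweakeriso} are met and it turns this bicontext into an isomorphism $H(\cliffs_E)\approx H(\mcS(E))$; combined with $\cliffs_X=\mcS(X)$ and the first part applied to $H(\,\cdot\,\potimes\mcA)$, this yields $H(\mcA\potimes\mcS(X))\approx H(\mcA\potimes\mcS(E))$.
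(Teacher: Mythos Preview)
Your argument is essentially the alternative route the paper sketches in the Remark \emph{following} Theorem \ref{Thomiso}, not the proof the paper actually gives. Both your first step (perturb $F_E$ to the fibrewise harmonic-oscillator operator $\bar F_E$, use Proposition \ref{productandextension} to obtain a product, then invoke a fibrewise index computation) and your rotation step are exactly what that Remark outlines. So the strategy is sound, and the paper confirms it works.

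The paper's own proof differs in the first half. Instead of perturbing operators and invoking a ``base-parametrised Proposition \ref{indeximports}'', it identifies $H(\beta,\bar\beta)$ with multiplication by the $K$-theory class $B:=[y_E^\infty|_{\mC}]\in K(\cliffs_E)$ followed by the module map $\mu_{\cliffs_E}$, and then uses the $\fn^\infty(X)$-linearity Lemma \ref{xlinearity} together with Propositions \ref{pairing} and \ref{twoinduced} to reduce $H(\alpha,\bar\alpha)\circ H(\beta,\bar\beta)$ to the single $K$-theory identity $K(\alpha,\bar\alpha)(B)=1$, which holds by classical $KK$. This buys the paper two things: it never needs to formulate or prove a parametrised analogue of Proposition \ref{indeximports} (the step you flagged as the main obstacle, and which in your write-up is only asserted), and it runs the rotation argument entirely in terms of the $K$-class $B$ and the restriction maps $\phi_{E,E'}$ of Lemma \ref{factorize}, rather than via a flip diffotopy on $\cliffs_E\potimes\cliffs_E$. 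Your approach is closer to the unparametrised Bott argument and is more explicit geometrically; the paper's approach is shorter and trades the analytic perturbation for the algebraic $K$-action formalism developed in Section \ref{Kaction}. The $spin^c$ step via a Morita bicontext is the same in both.
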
 
\begin{proof} Using the functor $H^{\mcA}:=H(\,\cdot\,\potimes\mcA)$, we may reduce the case for a general algebra $\mcA$ to the case $\mcA=\mC$. We also recall that $H$ is diffotopy invariant by Theorem \ref{diffotopyinvariance}.

Let $B\in K(\cliffs_E)$ denote the class obtained by restricting $y_E^\infty$ to $\mC$. Then  $B$ acts as multiplication as in $F_E^X(B):H(\cliffs_X)\to H(\cliffs_X\potimes \cliffs_E)$  by Proposition \ref{pairing}. Combining this with the multiplication $\cliffs_X\potimes\cliffs_E\to\cliffs_E$, we obtain a map
$$\mu(B):H(\cliffs_X)\to H(\cliffs_E).$$
We have
\begin{enumerate}
\item $H(\beta,\bar\beta)=\mu(B)$
\item $H(\alpha,\bar\alpha)\circ\mu(B)=H(K(\alpha,\bar\alpha)(B))$
\item $K(\alpha,\bar\alpha)(B)=1$\note{ This follows from
\[K(\alpha,\bar\alpha)(B)=K(\alpha,\bar\alpha)(\iota^*(y_E))=[\iota]\cap(y_E\cap[\alpha,\bar\alpha])=\iota^*(1_{KK(X,X)})=1_{K(X)}\]}{}
\end{enumerate}
Hence $H(\alpha,\bar\alpha)\circ H(\beta,\bar\beta)=1$ by Lemma \ref{xlinearity}:
\begin{align*}
H(\alpha,\bar\alpha)\circ H(\beta,\bar\beta)=& H(\alpha,\bar\alpha)\circ H(\mu_{\cliffs_E})\circ  F_E^X(B)\\
\stackrel{\ref{xlinearity}}{=} &H(\mu_{\cliffs_X})\circ H((\alpha,\bar\alpha)\otimes \cliffs_X)\circ F_E^X(B)\\
=& \id_{H(\cliffs_X)}\\
\end{align*}
where we apply Proposition \ref{pairing} and \ref{twoinduced} to see the last inequality.\note{ We here have neglected the stabilisation that has to be postcomposed with $H(\alpha,\bar\alpha)$. Still neglecting this,  we get more precisely
\begin{align*}
H(\alpha,\bar\alpha)\circ H(\beta,\bar\beta)=& H(\alpha,\bar\alpha)\circ H(\mu_{\cliffs_E})\circ  {}_HF_E^X(B)\\
\stackrel{\ref{xlinearity}}{=} &H(\mu_{\cliffs_X})\circ H((\alpha,\bar\alpha)\otimes \cliffs_X)\circ {}_HF_E^X(B)\\
=&H(\mu_{\cliffs_X})\circ H^X(\alpha,\bar\alpha)\circ {}_{H^X}F^{\mC}_E(B)\\
=&H(\mu_{\cliffs_X})\circ {}_{H^X}F_X^\mC(K(\alpha,\bar\alpha)(B))\\
=&H(\mu_{\cliffs_X})\circ {}_{H^X}F_X^\mC(1_{\cliffs_X})\\
=& \id_{H(\cliffs_X)}\\
=&1.
\end{align*}
This uses that $H(\alpha,\bar\alpha)\circ (B)=H(K(\alpha,\bar\alpha)(B))$ for any split exact functor $H$, which should follow from compatibility of the action of $K$-theory with morphisms.
}{}

Using again the rotation trick, but this time fibrewise, we get that $H(\alpha,\bar\alpha)$ is invertible. More in detail:
\begin{align*}
H(\mc{L}^p\otimes (\beta,\bar\beta))\circ H(\alpha,\bar\alpha)=&H(\mc{L}^p\otimes {\phi_{X,E}}_*(\cliffs_X \otimes B))\circ H(\alpha,\bar\alpha)\\
=&H(\mc{L}^p\otimes\phi_{X,E})\circ H((\mc{L}^p\potimes \cliffs_X)\otimes B)\circ H(\alpha,\bar\alpha)\\
\stackrel{\ref{outerprods}}{=}&H(\mc{L}^p\otimes \phi_{X,E})\circ H((\alpha,\bar\alpha)\otimes \cliffs_{E})\circ H(\cliffs_E\otimes B)\\
\stackrel{\ref{factorize}}{=}& H(\alpha',\bar\alpha')\circ H(\phi_{E,E})\circ H(\cliffs_E\otimes B)\\
=&H(\alpha',\bar\alpha')\circ H(\phi_{E,E}(\cliffs_E\otimes B))\\
=&H(\alpha',\bar\alpha')\circ H(\phi_{E,E}(B\otimes \cliffs_{E}))\\
\stackrel{\ref{factorize}}{=}&H(\mc{L}^p\otimes\phi_{X,E})\circ H^{\cliffs_{E}}((\alpha,\bar\alpha))\circ H^{\cliffs_{E}}(B  )\\
\end{align*}
\note{To see that $H(\phi_{E,E})\circ H(\cliffs_E\otimes B)=H(\phi_{E,E})\circ H(B\otimes\cliffs_E)$, there should be two possibilities. (To write
$H(\phi_{E,E})\circ H(\cliffs_E\otimes B)$ as a morphism coming from something}{}Thus again $H(\alpha,\bar\alpha)$ is left and right invertible, hence invertible with inverse $H(\beta,\bar\beta)$.

For the second part of the theorem, one applies the Morita context coming from the $spin^c$ structure.
\end{proof}
\begin{Rem} As in the case of Bott periodicity, one may also calculate the product more directly: 
Let $(\alpha,\bar\alpha):=Qh(x_n^\infty)$ and $(\beta,\bar\beta):=Qh(y_n^\infty)$. Then there are obvious fibred versions $(\gamma,\bar\gamma):=((\alpha,
\bar\alpha)\otimes \mC_n)_{P^\infty}$ and $(\delta,\bar\delta):=((\beta,\bar\beta)\otimes \mC_n)_{P^\infty}$ of these quasihomomorphisms obtained as in the $C^*$-case. As in the proof of smooth Bott 
periodicity, $(\delta,\bar\delta)$ may be replaced with $(\tilde \delta,\bar{\tilde\delta})$ which is the fibred version of the quasihomomorphism associated to the $\LC$-Kasparov module obtained as before by replacing the operator in the Dirac element by $\bar D$. Then $(\tilde \delta,\bar{\tilde\delta})$ extends to $(D_{\alpha\otimes\id_{\mC_n}})_{P^\infty}$. Hence $(\gamma,\bar\gamma)$ and $(
\tilde \delta,\bar{\tilde{\delta}})$ have a product. As the equivariant index of their product over a point is one, we get the result.

Using a fibred version of the rotation argument, we see that $(\tilde \delta,\bar{\tilde\delta})$ and $(\gamma,\bar\gamma)$ are inverse to each other.
\end{Rem}

\section{Applications}
We now sketch the possible applications of the results proved in this paper. Details will appear elsewhere.
\begin{enumerate}
\item Much effort has gone into developing an analogue of Kasparov's bivariant $K$-functor for more general algebras, see for example the papers by Cuntz concerning the functor $kk$, as well as those of Weidner (\cite{MR1014824}), or with a more moderate scope Phillips monovariant $K$-theory for Frechet algebras (\cite{MR1082838}). The Bott periodicity theorem   \ref{Bott periodicity} can be used to develop bivariant $K$-theories on categories more general than $C^*$-algebras. The recipe is similar to the techiques used by Higson in \cite{MR1068250} to construct $E$-theory. However, it seems necessary to approach the construction from a more "homotopic" viewpoint, and to avoid stabilisations. Essentially, one proceeds as follows: One forms the stable diffotopy category of the category of locally convex algebra, as put forward in \cite{MR658514}. One then inverts a certain class of morphisms, in order to obtain a split exact functor; stabilizing this functor by an appropriate operator ideal yields a stable homotopy invariant split exact functor. In a forthcoming paper, we show that, as a corollary of  Theorem \ref{Bott periodicity}, this is the universal split exact, homotopy invariant and stable functor.

It seems very unlikely at the moment that this functor coincides with Cuntz's $kk$, and thus that $kk$ is not the universal split exact functor, as one might expect if it was the analogue of Kasparov's $KK$. 
\item The Connes-Thom isomorphism for locally convex algebras: This  important result due to Connes (\cite{MR605351}) relates the $K$-theory of a crossed product of a $C^*$-algebras $A\rtimes_\alpha\mR^n$ by $\mR^n$ from the $K$-theory of $A$. The result was later proved elegantly by Fack and Skandalis in the bivariant setting (\cite{MR600000}). Using the techniques developed so far and the Bott periodicity theorem \ref{Bott periodicity}, it is possible to prove an analogous result for a smooth version of such crossed products. 

\item Further, there here are applications to index theory and pseudodifferential operators. The passsage from locally convex algebras to double split extensions used above may be used to associate elements in bivariant $K$-theory groups to pseudodifferential operators. Using the Thom isomorphism (Theorem \ref{Thomiso}), it is now possible to state an index theorem for any split exact functor with certain properties. In particular, this includes the program outlined recently by Cuntz which analyses index-theory in the setting of his functor $kk$. 
\end{enumerate}
\bibliographystyle{alpha}
\bibliography{../../Fullbib}
\end{document}